\newcommand{\dd}{\,{\rm d}}
\newcommand{\ddd}{\,{\mathbf d}}
\newcommand{\e}{\epsilon}
\newcommand{\ee}{\varepsilon}
\renewcommand{\d}{\delta}
\newcommand{\la}{\lambda}
\renewcommand{\a}{\alpha}
\renewcommand{\b}{\beta}
\newcommand{\g}{\gamma}
\renewcommand{\k}{\kappa}
\renewcommand{\t}{\theta}
\newcommand{\p}{\partial}
\newcommand\R{{\mathbb{R}}}
\newcommand\1{{\mathds{1}}}
\renewcommand\P{{\mathcal{P}}}
\newcommand\Pp{{\mathbb{P}}}
\newcommand\A{{\mathcal{A}}}
\newcommand\N{{\mathbb{N}}}
\newcommand\M{{\mathfrak{M}}}
\newcommand\C{{\mathcal{C}}}
\newcommand{\La}{\Lambda}
\newcommand\Qq{{\mathbb{Q}}}
\newcommand\J{{\mathcal{J}}}
\newcommand\D{{\mathcal{D}}}
\renewcommand\S{{\mathcal{S}}}
\newcommand\E{{\mathbb{E}}}
\newcommand\T{{\mathbb{T}}}
\newcommand\F{{\mathcal{F}}}
\newcommand\X{{\mathbb X}}
\newcommand\Z{{\mathcal{Z}}}
\newcommand{\bb}[1]{{\mathbb #1}}
\newtheorem{theorem}{Theorem}[section]
\newtheorem{proposition}[theorem]{Proposition}
\newtheorem{lemma}[theorem]{Lemma}
\theoremstyle{definition}
\newtheorem{definition}[theorem]{Definition}
\theoremstyle{remark}
\newtheorem{remark}[theorem]{Remark}
\numberwithin{equation}{section}
\begin{document}

\title[On Mean Field Games models for exhaustible commodities trade]
{On Mean Field Games models for exhaustible commodities trade}

\author{P. Jameson Graber}
\address{J. Graber: Baylor University, Department of Mathematics;
One Bear Place \#97328;
Waco, TX 76798-7328 \\
Tel.: +1-254-710- \\
Fax: +1-254-710-3569 
}
\email{Jameson\_Graber@baylor.edu}

\author{Charafeddine Mouzouni}
\address{C. Mouzouni: Univ Lyon, \'Ecole centrale de Lyon, CNRS UMR 5208, Institut Camille Jordan, 36 avenue Guy de Collonge, F-69134 Ecully Cedex, France.}
\email{mouzouni{@}math.univ-lyon1.fr}

\subjclass[2010]{35Q91, 60H30, 35K61}
\date{\today}

\begin{abstract}
We investigate a mean field game model for the production of exhaustible resources. In this model, firms produce comparable goods, strategically set their production rate in order to maximise profit, and leave the market as soon as they deplete their capacities.  We examine the related Mean Field Game system and prove well-posedness for initial measure data by deriving suitable a priori estimates. Then, we show that feedback strategies which are computed from the Mean Field Game system provide $\ee$-Nash equilibria to the corresponding $N$-Player Cournot game, for large values of $N$. This is done by showing tightness of the empirical process in the Skorokhod ${\rm M 1}$ topology, which is defined for distribution-valued processes.
\end{abstract}

\keywords{mean field games, exhaustible resources, Cournot models, Nash equilibrium.}

\maketitle

\section{Introduction} \label{sec:intro}

Since its introduction about ten years ago, the theory of the Mean Field Games has expanded tremendously, and has become an important tool in the study of dynamical and equilibrium behavior of large systems. The theory was introduced separately by a series of seminal papers by Lasry and Lions \cite{lasry06,lasry06a,lasry07} and Caines et al. \cite{Malhame, Malhame2}, and in lectures by Pierre-Louis Lions at the Coll\`ege de France, which were video-taped and made available on the internet \cite{Lions-college-de-France}. The main idea is inspired from statistical physics literature, and consists in considering that a given player interacts with competitors through their statistical distribution in the space of possible states. 

Mean Field Games (MFG) theory provides a methodology to produce approximate Nash equilibria for stochastic differential games with symmetric interactions and a large (but finite) number of players $N$. 
In these games, the exact equilibrium strategies could be determined by a system of coupled Hamilton-Jacobi-Bellman equations, derived from the dynamic programming principle. However, the dimension of the system in general increases in $N$, which makes this system extremely hard to solve either analytically or numerically, especially for large values of $N$. The Mean Field Game approach simplifies the modelling, and allows to compute an approximation of Nash equilibria by solving a system of two forward-backward coupled PDEs.
This simplification justifies partly the interest in the MFG modelling for several applications.

In this paper we revisit a family of MFG models related to competing producers with exhaustible resources.
The dynamic market evolution is driven by the use of certain existing reserves to produce and trade comparable goods. 
Producers disappear from market as soon as they exhaust their capacities, so that the fraction of remaining firms decreases over time.
This type of model was first introduced by Gu\'eant, Lasry, and Lions \cite{gueant2011mean}, and addressed also by Chan and Sircar in \cite{chan2015bertrand}, where it is referred to as ``Bertrand \& Cournot Mean Field Games." In  \cite{chan2015fracking}, the same authors use a similar MFG modelling  approach, to discuss recent changes in global oil market.
A more sophisticated model for the energy industry is proposed recently in \cite{ludkovski2017mean}, where producers have also the possibility to explore new resources to replenish their reserves. 

From a mathematical standpoint, Bertrand \& Cournot MFG system consists in a system of a backward Hamilton-Jacobi-Bellman (HJB) equation to model a representative firm's value function, coupled with a forward Fokker-Planck equation to model the evolution of the distribution of the active firms' states. The exhaustibility condition gives rise to absorbing boundary conditions at zero.
A rigorous analysis of this system was provided in \cite{graber2015existence}, where authors show existence of smooth solutions to the system of equations, and uniqueness under a certain restriction. Unconditional uniqueness is proved in \cite{graber2017variational}, in addition to the analysis of the case with Neumann boundary conditions. 

Otherwise, very little is known so far on the rigorous link between the so called Bertrand \& Cournot MFG models, and the corresponding $N$-Player Bertrand-Cournot stochastic differential games. Indeed, the classical theory cannot be applied to this specific case for two main reasons: on the one hand, because of the absorbing boundary conditions; and on the other hand, because in our model players are coupled through their controls, and therefore belongs to the class of extended Mean Field Games (cf. \cite{bensoussan2013, cardaliaguet2016mfgcontrols, gomes2014extended, gomes2016extended}).
This has motivated the present work, in which we analyse rigorously this question for Cournot competition. 

We investigate the mean-field approximation for $N$-Player continuous-time Cournot game with linear price schedule, and exhaustible resources. In this context, the producers' state variable is the reserves level, and the strategic variable is the rate of production. Producers disappear from the market as soon as they deplete their reserves, and the remaining active producers are constrained to set a non-negative rate of production, in order to manage their remaining reserves and maximize sales profit. Due to this constraint on the production rate -- which is natural from a modeling view point -- we obtain a Hamiltonian function that is less regular in comparison to \cite{graber2015existence, chan2015bertrand, graber2017variational}. Market demand is assumed to be linear, so that the received market price is a non-increasing linear function of the total production across all producers. Further details and explanations about the model will be given in Section \ref{sec:main results}.

We start by studying the 
resulting system of  coupled PDEs (the MFG system) by deriving suitable a priori estimates in H\"older spaces. We shall assume that the initial data is a probability measure that is supported on $(0, L]$, for some $L>0$, which entails that all producers start with positive reserves. Hence, our analysis completes that which is found in \cite{graber2015existence, graber2017variational}, by treating the case of a less regular Hamiltonian function and initial measure data. 
Next, we prove that the feedback control given by the solution of the Mean Field Game system provides an $\ee$-Nash equilibrium (cf. Definition \ref{epsilon-Nash}) to the corresponding $N$-Player Cournot game, where the error $\ee$ is arbitrary small for large enough $N$. 
This result shows that the MFG model is indeed a good approximation to the game with finitely many players, and reinforces numerical methods based on the MFG approach. 
As in the classical theory, the key argument in the proof of this result is a suitable law of large numbers. In our context, the main mathematical challenge comes from the fact that agents interact through the boundary behaviour, and are coupled by means of their chosen production strategies.
To prove a tailor-made law of large numbers, we employ a compactness method borrowed from \cite{ledger2016-2, ledger2016-1}, by showing tightness of the empirical process in the space of distribution valued \emph{c\`adl\`ag} processes, endowed with Skorokhod's M1 topology  \cite{ledger2016-2}. 
In contrast to the classical tools used so far, this method does not provide an exact quantification of the error $\ee$, which is its main downside. Nevertheless, this approach has proven to be convenient for studying systems with absorbing boundary conditions.
We also believe that it could be extended to the case of a systemic common noise, just as \cite{ledger2016-2} contains an analysis of a \emph{stochastic} McKean-Vlasov equation.
However, we do not address this case here, finding the analysis of the stochastic HJB/FP-system somewhat out of reach under our assumptions on the data
(cf.~\cite[Section 4]{master-equation} and the hypotheses found there).

For background on Skorokhod's topologies for real valued processes, we refer the reader to \cite{Whitt2002} and references therein. The M1 topology is extended to the space of tempered distributions, and to more general spaces in \cite{ledger2016-2}.
The fact that the feedback MFG control provides $\ee$-Nash equilibria for the corresponding differential games with a large (but finite) number of players, was first noticed by Caines et al.~\cite{Malhame, Malhame2} and further developed in several works (see e.g. \cite{carmona2013, kolokoltsov2011} among many others). Cournot games with exhaustible resources and finite number of agents is investigated by Harris et al.~in \cite{harris2010}, and the corresponding MFG models were studied in \cite{gueant2011mean, chan2015bertrand,chan2015fracking, ludkovski2017mean} with different variants, and numerical simulations. 
We refer the reader to \cite{gueant2011mean, bensoussan2013, carmona2018, lasry07} for further background on Mean Field Game theory.

The paper is organized as follows.
In the remainder of this section, we give some technical notations and preliminaries, introduce the mathematical description of the $N$-Player Cournot game with limited resources and the corresponding Mean Field Game, and state the main results of this paper.
In Section \ref{section2} we prove existence and uniqueness of regular solutions to the MFG system by deriving suitable H\"older estimates.
In Section \ref{section::application} we show that the feedback control computed from the MFG system is an $\ee$-Nash equilibrium to the $N$-Player game. For that purpose, we start by showing the weak convergence of the empirical process with respect to the M1 topology, then we deduce the main result by recalling the interpretation of the MFG system in terms of games with mean-field interactions.

\subsection{Notations and preliminaries} \label{sec:notation}
Throughout this article we fix $L>0$, define $Q:=(0,L)$, and $Q_T := (0,T) \times (0,L)$. 
For any domain $\D$ in $\bb{R}$ or $\bb{R}^2$ we define $\bar \D$ to be the closure of $\D$, 
$L^s(\D)$, $1\leq s \leq \infty$ to be the Lebesgue space of $s$-integrable functions on $\D$;  $L^s(\D)_+$ to be the set of elements  $w\in L^s(\D)$ such that $w(x)\geq 0$ for a.e. $x \in \D$;
$W_s^{k}(\D)$,  $k\in \N$, $1\leq s \leq \infty$, to be the Sobolev space of functions  having a weak derivatives up to order $k$ which are $s$-summable on $\D$; 
$\C(\D)$ to be the space of all continuous functions on $\D$;
$\C_0(\D)$ to be the space of all continuous functions on $\D$ that vanish at infinity ($\C_0(\D) = \C(\D)$ when $\D$ is compact);
 $\C^{\t}(\D)$ to be the space of all H\"older continuous functions with exponent $\t$ on $\D$;  
$\C_c^{\infty}(\D)$ to be the set of smooth functions whose support is a compact included in $\D$;  $\S_\R$ denotes the space of rapidly decreasing functions, and $\S'_\R$ the space of tempered distributions.

For a subset $\D \subset \overline{Q_T}$, we also define $\C^{1,2}(\D)$ to be the set of all functions on $\D$ which are locally continuously differentiable in $t$ and twice locally continuously differentiable in $x$,
and by $W_s^{1,2}(\D)$ the space of elements of $L^s(\D)$ having weak derivatives of the form $\p_t^{j}\p_x^{k}$ with $2j+k \leq 2$, endowed with the following norm:
$$
\| w \|_{W_s^{1,2}} :=  \sum_{2j+k\leq 2} \| \p_t^{j}\p_x^{k}w \|_{L^s}.
$$
The space of $\R$-valued Radon measures on $\D$ is denoted $\M(\D)$, which we identify with $\C_0(\D)^*$ endowed with weak$^*$ topology, and $\P(\D)$, $\tilde\P(\D)$ are respectively the convex subset of probability measures on $\D$, and the convex subset of sub-probability measures: that is the set of positive radon measures $\mu$, s.t. $\mu(\D)\leq 1$. For any measure $\mu \in \M(\D)$, we denote by ${\rm supp}(\mu)$ the support of $\mu$. 

Throughout the paper, we fix a complete filtered probability space $(\Omega, \F, \mathbb F=(\F_t)_{t\geq 0} , \Pp)$, and suppose that is rich enough to fulfill the assumptions that will be formulated in this article.  We also fix constants $r, \sigma,T>0$,
and denote by $C$ a {\em generic} constant whose precise value may change from line to line.  We also use the notation $C(\a,\b,\g)$ and the like to point out the dependence of some constant on parameters $\a,\b, \g$. Moreover, we use the notation $X\sim  \mu$ to define a random variable $X$ with law $\mu$.
For any $\R$-valued function $w$ we define the positive and negative parts of $w$, respectively:
$$
w^+ := \frac{1}{2}(|w|+w), \quad \mbox{ and } \quad w^- := \frac{1}{2}(|w|-w);
$$
and for any $x,y \in \R$ we use the following notation for the minimum and maximum, respectively:
$$
x \wedge y := \frac{1}{2}\left(x+y -|x-y| \right); \quad \mbox{ and } \quad x \vee y := \frac{1}{2}\left(x+y+|x-y| \right).
$$

Let us recall a few basic facts on stochastic differential equation with reflecting boundary in a half-line. Given a random variable $V$ that is supported on $(-\infty, L]$, we look for a pair of a.s. continuous and adapted processes $(X_t)_{t\geq0}$ and  $(\xi_t^X)_{t\geq 0}$ such that:
\begin{subequations}
\begin{eqnarray}\label{skorokhod-problem}
&&X_t =  V + \int_0^t b(s,X_s) \dd s +\sigma W_t - \int_0^t \1_{\{X_s=L\}} \dd \xi_s^X \in (-\infty, L],  \nonumber \\
&&\xi_t^X = \int_0^t \1_{\{X_s=L\}} \dd \xi_s^X,\\ \nonumber
&&X_0=V, \ \  \xi_0^X=0, \  \ \mbox{ and } \xi^X \mbox{ is nondecreasing,}
\end{eqnarray}
where $(W_t)_{t\geq 0}$ is a $\mathbb F$-Wiener process  that is independent of $V$.
The random process $(X_t)_{t\geq 0}$ is the \emph{reflected diffusion}, $(\xi_t^X)_{t\geq 0}$ is the \emph{local time}, and the above set of equations is called the \emph{Skorokhod problem}. Throughout the paper, we shall write problem \eqref{skorokhod-problem} in the following simple form:
$$
\dd X_t =   b(t,X_t) \dd t + \sigma \dd W_t - \dd \xi_t^X,  \quad X_0=V.
$$
Suppose that the function  $b$ is bounded, and satisfies for some $K>0$ the following condition:
\begin{equation}\label{assumption-SDEs}
\left| b(t,x)-b(t,y) \right| \leq K |x-y|
\end{equation}
for all  $t\in [0,T]$, and $ x,y \in (-\infty, L]$. Then, it is well-known (see e.g. \cite{anderson1976, Reflexion}) that under these conditions, problem \eqref{skorokhod-problem} has a unique solution on $[0,T]$. Moreover, this solution is given explicitly by:
\begin{equation} \label{skorokhod-problem-equation-formula}
X_t:= \Gamma_t(Y), \quad \xi_t^X:=Y_t- \Gamma_t(Y);
\end{equation}
where the process $(Y_t)_{t\in [0,T]}$ is the solution to
\begin{equation}\label{skorokhod-problem-equation}
Y_t =  V + \int_0^t b(s,\Gamma_s(Y) ) \dd s +\sigma W_t,
\end{equation}
and where $\Gamma$ is the so called \emph{Skorokhod map}, that is given by
$$
\Gamma_t(Y) := Y_t  -\sup_{0\leq s\leq t}\left( L-Y_s \right)^{-}.
$$
Furthermore, notice that 
\begin{equation}\label{inequalities-growth-T}
\xi_t^X-\xi_{t+h}^X  \geq \inf_{v \in [0,h]}(Y_t - Y_{t+v})
\end{equation}
for any $t\in[0,T)$ and $h\in(0,T-t)$.
In fact, when $\xi_t^X < \xi_{t+h}^X$, then
$$
0< \xi_{t+h}^X :=\sup_{0\leq s\leq t+h}\left( L-Y_s \right)^{-} = \sup_{t\leq s\leq t+h}\left( L-Y_s \right)^{-} = (Y_{v_0}-L)
$$
for some $t\leq v_0\leq t+h$. Therefore
\begin{eqnarray*}
\xi_t^X-\xi_{t+h}^X &=& \sup_{0\leq s\leq t}\left( L-Y_s \right)^{-} - \sup_{0\leq s\leq t+h}\left( L-Y_s \right)^{-} \\
&\geq& (Y_t-L)-(Y_{v_0}-L) \geq \inf_{v \in [0,h]}(Y_t - Y_{t+v}).
\end{eqnarray*}
This entails \eqref{inequalities-growth-T} since the last inequality still holds when $\xi_t^X = \xi_{t+h}^X$.
\end{subequations}

Now we consider a boundary value problem for the \emph{Fokker-Planck} equation.
Let $b$ in $L^2(Q_T)$, $m_0 \in \P(\bar Q)$, and consider the following Fokker-Planck equation
\begin{subequations}
\begin{equation}
\label{eq:fp}
\left\{
\begin{aligned}
&m_t - \frac{\sigma^2}{2} m_{xx} - (b m)_x = 0 \ \ \mbox{ in } Q_T \\
& m(0)=m_0 \ \ \mbox{ in } Q,
\end{aligned}
\right.
\end{equation}
complemented with the following mixed boundary conditions:
\begin{equation}\label{eq:fp:bc}
m(t,0)=0, \ \  \mbox{ and } \ \ \frac{\sigma^2}{2}m_x(t,L) + b(t,L)m(t,L)=0 \quad \mbox{ on } (0,T).
\end{equation}
Then we define a \emph{weak solution} to \eqref{eq:fp}-\eqref{eq:fp:bc} to be a function $m \in L^1(Q_T)_+$ such that $m|b|^2$ in $L^1(Q_T)$, and
\begin{equation}
\label{eq:weak fp}
\int_0^T \int_0^L m(-\phi_t - \frac{\sigma^2}{2} \phi_{xx} + b \phi_x) \dd x \dd t = \int_0^L \phi(0,.)\dd m_0
\end{equation}
for every $\phi \in \C_c^\infty([0,T) \times \overline{Q})$ satisfying
\begin{equation}\label{eq:weak fp:bc}
\phi(t,0)=\phi_x(t,L) = 0, \ \ \forall t \in (0,T).
\end{equation}
\end{subequations}
This is the definition given by Porretta in \cite{porretta2015weak}.
The only difference is that here we consider mixed boundary conditions and measure initial data. 

When  $m_0\in L^1(Q)_+$, the problem \eqref{eq:fp} endowed with periodic, Dirichlet or Neumann boundary conditions has several interesting features that were pointed out in \cite[Section 3]{porretta2015weak}.
In particular, they are unique \cite[Corollary 3.5]{porretta2015weak} and enjoy some extra regularity \cite[Proposition 3.10]{porretta2015weak}.
 Note that these results still hold in the case of mixed boundary conditions \eqref{eq:fp:bc}. Throughout the paper, we shall use the results of \cite[Section 3]{porretta2015weak} for \eqref{eq:fp}-\eqref{eq:fp:bc}.

In the case where $b$ is bounded, we shall use the fact that \eqref{eq:fp}-\eqref{eq:fp:bc} admits a unique weak solution, for any $m_0 \in \P(\bar Q)$. In fact, one can construct a solution by considering a suitable approximation of $m_0$, and then use the compactness results of \cite[Proposition 3.10]{porretta2015weak} in order to pass to the limit in $L^1(Q_T)$. The uniqueness is obtained by considering the dual equation, and using the same steps as for \cite[Corollary 3.5]{porretta2015weak} (cf. Proposition \ref{uniqueness:measure-valued}).

\subsection{Mathematical description of the model and main results}\label{sec:main results}
Let us now give a precise description of the problems considered in this paper.
Consider a market with $N$ producers of a given good whose strategic variable is the rate of production, and where raw materials are in limited supply. Concretely, one can think of energy producers that use exhaustible resources, such as oil, to produce and sell energy. Firms disappear from the market as soon as they deplete their reserves of raw materials.

Let us formalize this model in precise mathematical terms. Let $\left( W^{j} \right)_{1\leq j \leq N}$ be a family of $N$ independent $\mathbb F$-Wiener processes on $\R$, and consider the following system of Skorokhod problems:
\begin{equation}
\label{initial-dynamics-cournot}
\left\{
\begin{aligned}
&\dd X_{t}^{i} =  - q_t^i \dd t + \sigma  \dd W_{t}^{i}-\dd \xi_t^{X^{i}},\\
&X_{0}^{i} = V_{i},  \quad i=1,...,N.
\\\end{aligned}
\right.
\end{equation} 
Here $(V_{1},...,V_{N})$ is a vector of i.i.d and $\F_0$-measurable random variables, and we assume that  $V_1,...,V_N$ are independent of $W^1,...,W^N$ respectively.
Let us fix a common horizon $T>0$, and set 
$$
\tau^{i}:= \inf\left\{ t\geq 0 \ : \ X_t^{i} \leq 0 \right\} \wedge T.
$$
The stopped random process $\left(X_{t \wedge \tau^{i}}^{i} \right)_{t\in[0,T]}$ models the reserves level of the $i^{th}$ producer on the horizon $T$, which is gradually depleted according to a non-negative controlled rate of production $\left(q_t^i \right)_{t\in[0,T]}$. The stopping condition indicates that a firm
can no longer replenish its reserves once they are exhausted, and
the Wiener processes in \eqref{initial-dynamics-cournot} model the idiosyncratic fluctuations related to production. In addition, we consider $L$ to be an upper bound on the reserves level of any player. This latter assumption is also considered in \cite{graber2015existence, graber2017variational} and is taken into account by considering reflected dynamics in \eqref{initial-dynamics-cournot}. Since the rate of production is always non-negative, note that reflection has practically no effect when $L$ is large compared to the initial reserves and $\sigma$.
\begin{remark}
	[State constraints]
	Instead of reflecting boundary conditions, one could insist upon a hard state constraint of the form $X_t^i \leq L$.
	Some recent work on MFG with state constraints suggests this is possible \cite{cannarsa2018existence,cannarsa2018constrained,cannarsa2018mean}, provided one correctly interprets the resulting system of PDE (the Fokker-Planck equation presents a special challenge).
	In this work we take a more classical approach, for which probabilistic tools are more readily available.
\end{remark}

The producers interact through the market. We assume that demand is linear, so that the price $p^i$ received by the firm $i$ reads:
\begin{equation}\label{price-expression-cournot}
p_t^i = 1-(q_t^i + \k \bar q_t^i), \quad \mbox{ where } \quad \bar q_t^i = 
\frac{1}{N-1}\sum_{ j\neq i} q_t^{j}\1_{t<\tau^{j}}, \ \ \mbox{ for } \ \ 0\leq t\leq T.
\end{equation}
Here $\k>0$ expresses the degree of market interaction, in proportion to which abundant total production will put downward pressure on all the prices. Note that only firms with nonempty reserves at $t \in[0,T]$ are taken into account in \eqref{price-expression-cournot}. The other firms are no longer present on the market.
The producer $i$ chooses a non-negative production rate $q^i$ in order to maximize the following discounted profit functional:
$$
\J_c^{i,N}(q^1,...,q^N):=\E\left\{ \int_{0}^{T} e^{-rs}  \left( 1-\k \bar q_s^i -q_s^i \right) q_s^i \mathds{1}_{s < \tau^{i}} \dd s + e^{-rT}u_{T}( X_{\tau^{i}}^{ i} )  \right\},
$$
where the terminal profit $u_T$ is a smooth function satisfying $u_T(0)=0$.
Observe that firms can no longer earn revenue as soon as they deplete their reserves. We refer to \cite{harris2010, chan2015fracking} for further explanations on the economic model and applications.

We denote by $\mathbb A_c$ the set of admissible controls for any player; that is the set of \emph{Markovian feedback controls}, i.e.  
$
q_t^{i}=q^i \left(t, X_t^{1},...,X_t^{N} \right);
$ 
such that $(q_t^i)_{t\in[0,T]}$ is non-negative, satisfies
$$
\E\left[\int_0^T |q_s^{i} |^2 \mathds{1}_{s < \tau^{i}} \dd s \right] < \infty,
$$
and the $i^{th}$ equation of \eqref{initial-dynamics-cournot} is well-posed in the classical sense. Restriction to Markovian controls  rules out equilibria with undesirable properties such as non-credible threats (cf. \cite[Chapter 13]{Tirole1991}).

Now, we give a definition of Nash equilibria to this game:

\begin{definition}[Nash equilibrium]   \label{Nash:equilibrium-Cournot}
	A strategy profile $\left(q^{1,\ast},..., q^{N, \ast}\right)$ in $\prod_{i=1}^N \mathbb A_c$ is a \emph{Nash equilibrium} of the $N$-Player Cournot game, if for any $i=1,...,N$ and $q^i \in \mathbb A_c$ 
	$$
	\J_c^{i,N}\left(q^i; ( q^{j,\ast})_{j\neq i} \right) \leq \J_c^{i,N}\left( q^{1,\ast},..., q^{N,\ast}\right).
	$$
\end{definition}
In words, a Nash equilibrium is a set of admissible strategies such that each player has taken an optimal trajectory in view of the competitors' choices. 

The existence of Nash equilibria for the $N$-Player Cournot game with exhaustible resources is addressed in \cite{harris2010}. In particular, the authors show the existence of a unique Nash equilibrium in the static (one period) case, and study numerically a specific duopoly example by using a convenient asymptotic expansion. In general, the analysis of equilibria  for $N$-Player Cournot games is a challenging task both analytically and numerically, especially when $N$ is large. In the case of exhaustible resources, the dynamic programming principle generates an even more complex PDE system because of the nonstandard boundary conditions which are obtained (cf. \cite[Section 3.1]{harris2010}). 

To remedy this problem several works have rather considered a Mean-Field model \cite{gueant2011mean, ludkovski2017mean, harris2010, chan2015bertrand, chan2015fracking} as an approximation to the initial $N$-Player game, when $N$ is large.
	More precisely, we introduce the following:
	\begin{definition}[$\ee$-Nash equilibrium] \label{epsilon-Nash}
		Let $\ee>0$, and let $(\hat q^1,...,\hat q^N)$ be an admissible strategy profile (i.e.~an element of $\prod_{i=1}^N \mathbb A_c$).
		We say $(\hat q^1,...,\hat q^N)$ provides an \emph{$\ee$-Nash equilibrium} to the game $\J_c^{1,N},...,\J_c^{N,N}$ provided that, for any $i=1,...,N$ and $q^i \in \mathbb A_c$,
		\begin{equation*}
		\J_c^{i,N}\left(q^i; (\hat q^j)_{j\neq i} \right) \leq \ee+\J_c^{i,N}\left(\hat q^1,...,\hat q^N \right).
		\end{equation*}
	\end{definition}
	In words, an $\ee$-Nash equilibrium is a set of admissible strategies such that each player has taken an \emph{almost} optimal trajectory in view of the competitors' choices, where $\ee$ measures the distance from optimality.

	To construct $\ee$-Nash equilibria, we turn to the corresponding \emph{mean field} problem.
	Let us now consider a continuum of agents, producing and selling comparable goods. At time $t=0$, all the players have a positive capacity $x\in (0,L]$, and are distributed on $(0,L]$ according to $m_0$. 

The remaining capacity (or reserves) of any atomic producer with a production rate $(\rho)_{t\geq 0}$ depletes according to 
$$
\dd X_t^\rho=- \rho_t \1_{t<\tau^\rho} \dd t + \sigma \1_{t<\tau^\rho} \dd W_{t} - \dd \xi_t^{X^\rho},
$$
where 
$$
\tau^\rho := \inf\{t\geq 0 : X_t^\rho \leq 0\} \wedge T,
$$
and $(W_t)_{t\in[0,T]}$ is a $\mathbb F$-Wiener process.
A generic player which anticipates the total production $\bar q$ expects to receive the price
$$
p:= 1-(\k \bar q + \rho)
$$
and solves the following optimization problem:
\begin{equation}\label{the functional-defintion}
\max_{\rho\geq 0 } \J_c(\rho):= \max_{\rho\geq 0 } \E\left\{\int_0^T e^{-r s} \left(1-\k \bar q_s-\rho_s  \right)\rho_s \1_{s<\tau^{\rho}}\dd s + e^{-rT}u_{T}\left(X_{T}^\rho \right)   \right\}.
\end{equation}
The maximum in \eqref{the functional-defintion} is taken over all $\mathbb F$-adapted and non-negative processes $(\rho_t)_{t\in [0,T]}$ such that  
$$
\E\left[\int_0^T |\rho_s|^2 \1_{s<\tau^\rho} \dd s \right] <\infty 
$$ 
and
$(X_t^\rho)_{t\in[0,T]}$ exists in the classical sense.

According to MFG theory, the equilibrium in this setting can be computed by solving the following coupled system of parabolic partial differential equations:
	\begin{equation}
	\label{MFG-limit-1} 
	\left\{
	\begin{aligned}
	&u_{t}+\frac{\sigma^{2}}{2} u_{xx}-r u + q_{u,m}^2=0
	\quad \mbox{ in }Q_T,\\
	& m_{t}-\frac{\sigma^{2}}{2}m_{xx}-\left\{ q_{u,m} m \right\}_{x}=0 \quad \mbox{ in }Q_T,\\
	& m(t,0)=0,\quad  u(t,0)=0,\quad  u_{x}(t,L)=0\quad \mbox{ in } (0,T),  \\
	&m(0)=m_{0}, \quad  u(T,x) = u_{T}(x),\quad \mbox{ in } [0,L],\\
	&\frac{\sigma^{2}}{2}m_{x} + q_{u,m}m=0    \quad \mbox{ in } (0,T)\times\{L\},
	\end{aligned}
	\right.
	\end{equation} 
	where the function $q_{u,m}$ involved in the system is given by:
	\begin{equation}\label{MFG-quantity-def-initial}
	q_{u,m}(t,x):= \frac{1}{2} \left( 1-\k \bar q (t) -u_{x}(t,x)  \right)^{+},  
	\quad 
	\mbox{where}
	\quad
	\bar q(t) := \int_0^L q_{u,m}(t,x)m(t,x)\dd x,
	\end{equation}
	and $\k>0$.
	Here $m$ is the density of a continuum of market actors,
	$q_{u,m}(t,x)$ is the optimal production rate of an atomic player with reserves $x$ at time $t$,  and $u$ is the the game value function of an atomic player following the production policy $q_{u,m}$. 
	
	Let us assume that $u_T$ is a function in ${\C}^2(\bar Q)$,  such that the first derivative of $u_T$ denoted by $u'_T$ fulfils
	\begin{equation}  \label{A1}
	\tag{$\mathcal H$1}
	u'_{T}\geq 0 \quad  \mbox{and} \quad u_T(0)=u'_{T}(L)=0
	\end{equation}
	and that $m_0$ is a probability measure with support away from 0, i.e.
	\begin{equation} \label{Aa1}
	\tag{$\mathcal H$2}
	m_0 \in \P(\bar Q),  \ \mbox{ and } \  {\rm supp} (m_0) \subset (0,L].
	\end{equation}
	We shall say that a pair $(u,m)$ is a solution to \eqref{MFG-limit-1}, if 
	\begin{enumerate}[ label=(\roman*)]	
		\item $ u \in \C^{1,2}(Q_T)$,  $u,u_x \in \C(\overline {Q_T})$; 
		\item $m \in \C([0,T]; \M(\bar Q)) \cap L^1(Q_T)_+$, and $\| m(t) \|_{L^1} \leq 1$ for every  $t\in (0,T]$; 
		\item the equation for $u$ holds in the classical sense, while the equation for $m$ holds in the weak sense \eqref{eq:weak fp}.
	\end{enumerate}
	
	The following lemma establishes the connection between $(u,m)$ and problem \eqref{the functional-defintion}:
	\begin{lemma}\label{Lemma-optimal:control-Cournot}
		Let $(u,m)$ be a solution to \eqref{MFG-limit-1} and set $\rho_t^\ast:= q_{u,m}(t,X_t^{\rho^\ast})$.
		Then
		\begin{equation}\label{equation-1-lemma211-cournot}
		\max_{\rho \geq 0} \J_c(\rho) = \J_c(\rho^\ast)= \int_0^L u(0,.) \dd m_0.
		\end{equation}
	\end{lemma}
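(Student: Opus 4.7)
The plan is to use a standard verification argument: apply Itô's formula to the process $(e^{-rt} u(t, X_t^{\rho}))_{t \geq 0}$ for an arbitrary admissible control $\rho$, exploit the HJB equation together with the boundary conditions in \eqref{MFG-limit-1}, and then recognize the Hamiltonian structure so that the integrand inherits the correct inequality pointwise (with equality for $\rho = \rho^\ast$). Since $u \in \C^{1,2}(Q_T)$ with $u, u_x \in \C(\overline{Q_T})$, all regularity needed to apply Itô (up to a standard smooth approximation near $x=0$, or after stopping at $\tau^\rho$) is available; moreover $u_x$ is bounded on $\overline{Q_T}$, which makes the local martingale part a true martingale.

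First I would write, for $t \in [0, T \wedge \tau^\rho]$,
\begin{align*}
d\bigl( e^{-rt} u(t, X_t^\rho) \bigr)
&= e^{-rt}\Bigl[ u_t - r u + \tfrac{\sigma^2}{2} u_{xx} - \rho_t u_x \Bigr](t, X_t^\rho)\, dt \\
&\quad + \sigma e^{-rt} u_x(t, X_t^\rho)\, dW_t - e^{-rt} u_x(t, X_t^\rho)\, d\xi_t^{X^\rho}.
\end{align*}
The reflection term vanishes because $\xi^{X^\rho}$ only grows on $\{X_t^\rho = L\}$ and $u_x(t,L) = 0$. The HJB equation rewrites $u_t + \tfrac{\sigma^2}{2} u_{xx} - ru = -q_{u,m}^2$. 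Taking expectations, using the martingale property of the stochastic integral, the boundary condition $u(t,0)=0$, the terminal condition $u(T,\cdot)=u_T$, the assumption $u_T(0)=0$, and the fact that $X_t^\rho$ freezes after $\tau^\rho$ (so $X_T^\rho = 0$ whenever $\tau^\rho < T$), I obtain
\begin{equation*}
\E[u(0, V)] - \E\bigl[e^{-rT} u_T(X_T^\rho)\bigr]
= \E\Biggl[ \int_0^{T \wedge \tau^\rho} e^{-rs}\bigl( q_{u,m}^2(s, X_s^\rho) + \rho_s\, u_x(s, X_s^\rho) \bigr)\, ds \Biggr].
\end{equation*}
Combining with the definition of $\J_c(\rho)$ and rewriting the running integral with the indicator $\1_{s < \tau^\rho}$ as an integral over $[0, T \wedge \tau^\rho]$ gives
\begin{equation*}
\J_c(\rho) = \int_0^L u(0, \cdot)\, dm_0 + \E\Biggl[ \int_0^{T \wedge \tau^\rho} e^{-rs}\, \mathcal{E}(s, X_s^\rho, \rho_s)\, ds \Biggr],
\end{equation*}
where $\mathcal{E}(s, x, \rho) := (1 - \kappa \bar q(s) - \rho)\rho - \rho\, u_x(s, x) - q_{u,m}^2(s, x)$.

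The key pointwise fact is that $\rho \mapsto (1 - \kappa \bar q(s) - \rho)\rho - \rho\, u_x(s,x)$ is a concave quadratic in $\rho \geq 0$ maximized at $\rho^\ast = \tfrac{1}{2}(1 - \kappa \bar q(s) - u_x(s,x))^+ = q_{u,m}(s,x)$, with maximum value $q_{u,m}^2(s,x)$. Hence $\mathcal{E}(s, x, \rho) \leq 0$ for every admissible $\rho \geq 0$, with equality if and only if $\rho = q_{u,m}(s, x)$. This immediately yields $\J_c(\rho) \leq \int_0^L u(0,\cdot)\, dm_0$ for every admissible $\rho$, and equality when $\rho_s = q_{u,m}(s, X_s^{\rho^\ast}) = \rho_s^\ast$, proving \eqref{equation-1-lemma211-cournot}.

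The main technical point I expect is admissibility of $\rho^\ast$: because $u_x \geq 0$ (one should verify this from \eqref{A1} via the maximum principle, as used implicitly elsewhere in the paper) and $\bar q \geq 0$, the feedback $q_{u,m}$ is bounded by $1/2$ on $\overline{Q_T}$, so the drift $-q_{u,m}(t, X_t)$ is bounded and Lipschitz in $x$ (inherited from the H\"older/Lipschitz regularity of $u_x$ obtained in Section \ref{section2}); hence the Skorokhod problem \eqref{skorokhod-problem} associated with the closed-loop SDE is well-posed and $\rho^\ast$ satisfies the square-integrability requirement automatically, so that the verification argument above closes.
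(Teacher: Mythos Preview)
Your argument is correct and follows essentially the same verification approach as the paper: apply It\^o's formula to $e^{-rt}u(t,X_t^\rho)$ up to $\tau^\rho$, use the HJB equation and the boundary conditions $u_x(t,L)=0$, $u(t,0)=0$, and then observe that $q_{u,m}^2(s,x)=\sup_{\rho\geq 0}\rho(1-\k\bar q(s)-\rho-u_x(s,x))$ gives the pointwise inequality with equality at $\rho=q_{u,m}$. Your additional remarks on the vanishing of the reflection term and on admissibility of $\rho^\ast$ (via the bound $q_{u,m}\leq 1/2$ and the Lipschitz regularity of $u_x$ from Lemma~\ref{holder-estimates}) are accurate and match what the paper invokes implicitly.
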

	The proof of Lemma \ref{Lemma-optimal:control-Cournot} is standard, and is given in Appendix \ref{App1}. 
	We deduce that the MFG system \eqref{MFG-limit-1} describes an equilibrium configuration for a Cournot game with exhaustible resources and a continuum of producers.
	
	We are now in a position to state the main results of this paper.
	\begin{theorem}[Well-posedness]\label{theorem:wellposedness}
		There exists a unique solution $(u,m)$ to system \eqref{MFG-limit-1}. 
	\end{theorem}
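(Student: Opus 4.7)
The natural approach is a Schauder fixed-point argument on the aggregate production profile $\bar q$, decoupling the two equations. The plan is to define, for each $\bar q \in \C([0,T])$ in a suitable convex compact subset, a HJB problem
\[
u_t + \frac{\sigma^2}{2} u_{xx} - ru + \tfrac14 \bigl( (1 - \kappa \bar q(t) - u_x)^+ \bigr)^2 = 0 \ \mbox{ in } Q_T,
\]
with $u(t,0) = 0$, $u_x(t,L) = 0$, $u(T,\cdot) = u_T$. Because the source is bounded whenever $u_x$ is, and the Hamiltonian is globally Lipschitz in $p$ once the Lipschitz constant of the map $p\mapsto((1-\kappa \bar q - p)^+)^2$ is controlled by $L^\infty$ bounds on $u_x$, classical Ladyzhenskaya--Solonnikov--Uraltseva estimates give a classical solution $u$ with $u_x \in \C^{\theta,\theta/2}(\overline{Q_T})$. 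A key preliminary step is to differentiate the equation in $x$ and apply a comparison principle, using \eqref{A1}, to deduce $0 \le u_x \le C$ with $C$ depending only on $u_T$ and on the $L^\infty$ bound $\|\bar q\|_\infty \leq 1/\kappa$; this makes the feedback $q = \tfrac12(1-\kappa\bar q - u_x)^+$ bounded, non-negative and Hölder continuous.

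With this feedback in hand I would then solve the Fokker--Planck equation
\[
m_t - \frac{\sigma^2}{2} m_{xx} - (q\, m)_x = 0 \ \mbox{ in } Q_T
\]
with the mixed boundary conditions \eqref{eq:fp:bc} and measure initial datum $m_0$ satisfying \eqref{Aa1}. Since $q$ is bounded, the existence/uniqueness discussion in the ``Notations and preliminaries'' section (based on Porretta's results extended to mixed boundary data and measure initial data) yields a unique weak solution $m \in \C([0,T];\M(\bar Q)) \cap L^1(Q_T)_+$ with $\|m(t)\|_{L^1} \le 1$. I would then define the map $\Phi: \bar q \mapsto \bar q'$ by
\[
\bar q'(t) := \int_0^L q(t,x)\, m(t,x)\dd x,
\]
and check that $\Phi$ maps a closed convex subset of $\C([0,T])$ (e.g.\ with the bound $0 \le \bar q \le 1/\kappa$ and a uniform modulus of continuity coming from the Hölder estimates on $q$ and the weak continuity of $m$) into itself.

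The core analytic work will be establishing the continuity/compactness of $\Phi$. Continuity requires stability estimates: (i) for the HJB, perturbing $\bar q$ in $L^\infty$ produces a perturbation of $u_x$ in $\C^0$ via parabolic Schauder estimates; (ii) for FP with measure data, small perturbations in the drift and continuity in $m_0$ produce small perturbations of $m$ in $\C([0,T];\M(\bar Q))$, using the dual formulation of the weak solution and the a priori Hölder bounds. Compactness follows from the Hölder/BV estimates on $\bar q'$. An application of Schauder's theorem then gives a fixed point, which by construction is a solution of \eqref{MFG-limit-1}.

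Uniqueness is the delicate step, since the system is an extended MFG (coupling through controls). I would pair the differences $u_1 - u_2$ and $m_1 - m_2$ of two hypothetical solutions, integrate by parts over $Q_T$, and exploit the quadratic structure of the Hamiltonian: the cross terms reduce to expressions of the form $\int_0^T (\bar q_1 - \bar q_2) \int_0^L (q_1 m_1 - q_2 m_2)$ combined with $\int_0^T\int_0^L (q_1 - q_2)^2(m_1 + m_2)$, and a careful computation using $\bar q_i = \int q_i m_i\dd x$ should yield monotonicity modulo a controllable boundary term that vanishes thanks to $u_i(t,0) = 0$ and $m_i(t,0) = 0$. The main obstacle I anticipate is exactly this uniqueness step --- reconciling the lack of classical Lasry--Lions monotonicity (because of the control coupling and the kink in the Hamiltonian) with the boundary behaviour at $x=0$ that reflects the absorption of depleted producers --- and, on the existence side, ensuring that the Hölder regularity of $u_x$ survives the non-smooth positive-part truncation uniformly across the fixed-point iterates.
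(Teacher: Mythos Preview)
Your uniqueness outline is essentially the argument the paper carries out in Step~3: pairing the two solutions and exploiting the quadratic structure one arrives at
\[
\int_0^T e^{-rt}\!\int_0^L (q_1-q_2)^2(m_1+m_2)\,\dd x\,\dd t \;+\; \kappa\int_0^T e^{-rt}(\bar q_1-\bar q_2)^2\,\dd t \;=\; 0,
\]
whence $\bar q_1\equiv\bar q_2$, then $u_1\equiv u_2$ by HJB uniqueness, then $m_1\equiv m_2$. The kink in the Hamiltonian causes no trouble: one rewrites $(2G+u_x)G_i$ so that the extra pieces are manifestly nonnegative. So the obstacle you flag on the uniqueness side is not the real one.

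For existence your fixed-point variable differs from the paper's, and this is where there is a genuine gap. The paper runs a Leray--Schauder argument on the \emph{pair} $(\varphi,\nu)$ in the space $\{\varphi,\varphi_x\in\C(\overline{Q_T})\}\times L^1(Q_T)_+$, first for $m_0\in L^1$ and then approximating for measure data. Compactness on the $\nu$-component comes from Porretta-type $L^1(Q_T)$ estimates \eqref{porretta-estimates}, which require \emph{no} time-continuity of $m$ or of any moment of $m$. Your Schauder argument on $\bar q\in\C([0,T])$, by contrast, needs the image $\Phi(\bar q)=\bar q'$ to land in an equicontinuous subset of $\C([0,T])$ on the whole interval, including $t=0$. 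The H\"older estimates that would give this---Lemma~\ref{continuity of measure}(ii) and the last part of Lemma~\ref{holder-estimates}---are only \emph{local} on $(0,T]$: the constants carry a factor $(1-e^{-\pi^2 t/4L^2})^{-1/2k}$ from the heat-kernel bound \eqref{eq:change of probability estimate-bis} and blow up as $t\to 0^+$. The paper remarks explicitly (just after Lemma~\ref{continuity of measure}) that it cannot extend these estimates to $t=0$. So the sentence ``Compactness follows from the H\"older/BV estimates on $\bar q'$'' is precisely the step that is not covered; the obstacle you anticipate---uniform H\"older regularity of $u_x$ across the positive-part truncation---is in fact handled routinely by $W^{1,2}_s$ estimates \eqref{eq:arbitrary k est}, whereas the time-regularity of $t\mapsto m(t)$ near $t=0$ is the delicate point.

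This gap is probably repairable: since \eqref{Aa1} forces ${\rm supp}(m_0)\subset[\delta,L]$ for some $\delta>0$, a short-time hitting estimate for the reflected diffusion should give exponential smallness of the mass loss near $t=0$ and hence equicontinuity there, and one can keep the approximating data $m_0^n$ supported in $[\delta/2,L]$ so the constants stay uniform. But this is an additional, nontrivial argument you would have to supply. The paper's choice of fixed-point space was made precisely to sidestep it.
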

	\begin{theorem}[Existence of $\ee$-Nash equilibria]\label{Theorem-epsilon::Nash-Cournot} Let $(u,m)$ be the solution to the MFG system \eqref{MFG-limit-1}, and let $q_{u,m}$ be given by \eqref{MFG-quantity-def-initial}. 
		For any $N\geq 1$ and $i\in\{1,...,N\}$ let
		\begin{equation}
		\left\{
		\begin{aligned}
		&\dd \hat X_{t}^{i} =  - q_{u,m}(t, \hat X_{t}^{i})\dd t + \sigma  \dd W_{t}^{i}-\dd \xi_t^{\hat X^{i}}\\
		&X_{0}^{i} = V_{i}, 
		\\\end{aligned}
		\right.
		\end{equation}
		and set $\hat q_t^i:= q_{u,m}(t, \hat X_{t}^{i})$.  
		Then for any $\ee>0$, the strategy profile $(\hat q^1,...,\hat q^N)$ is admissible, i.e. belongs to $\prod_{i=1}^N \mathbb A_c$, 
		and provides an $\ee$-Nash equilibrium to the game $\J_c^{1,N},...,\J_c^{N,N}$ for large $N$. Namely: 
		$\forall \ee>0, \ \exists N_\ee \geq 1$ such that
		\begin{equation}\label{epsilon-Nash-th2.13-Cournot}
		\forall N\geq N_\ee, \forall i=1,...,N, \quad    \J_c^{i,N}\left(q^i; (\hat q^j)_{j\neq i} \right) \leq \ee+\J_c^{i,N}\left(\hat q^1,...,\hat q^N \right),
		\end{equation}
		for any admissible strategy $q_i \in \mathbb A_c$. 
	\end{theorem}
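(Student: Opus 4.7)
The plan is to show that, as $N\to\infty$, both $\J_c^{1,N}(\hat q^1,\ldots,\hat q^N)$ and $\sup_{q^1\in \mathbb A_c}\J_c^{1,N}(q^1;\hat q^2,\ldots,\hat q^N)$ converge to the common mean-field optimal value $\int_0^L u(0,x)\,dm_0(x)$ (which equals $\sup_{\rho\ge 0}\J_c(\rho)$ by Lemma~\ref{Lemma-optimal:control-Cournot}); the $\ee$-Nash inequality then follows immediately from the sandwich. By exchangeability of $(V_i,W^i)$ and the symmetric construction of $(\hat X^i,\hat q^i)$, the two quantities in \eqref{epsilon-Nash-th2.13-Cournot} are independent of the index $i$, so it suffices to treat $i=1$. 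Admissibility $\hat q^i\in\mathbb A_c$ follows from Theorem~\ref{theorem:wellposedness}: the regularity of $(u,m)$ makes $q_{u,m}$ bounded (by $1/2$, once one knows $u_x\ge 0$) and Lipschitz in $x$, so the reflected SDE for $\hat X^i$ is classically well-posed and $\E\int_0^T |\hat q^i_s|^2 \1_{s<\hat\tau^i}\,ds<\infty$.

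The central technical ingredient, to be proved in Section~\ref{section::application}, is the convergence of the empirical process $\mu^N_t := \frac{1}{N}\sum_{j=1}^N \delta_{\hat X^j_t}\1_{t<\hat\tau^j}$ to $m(t,\cdot)$ in Skorokhod's M1 topology on $\S'_{\R}$-valued c\`adl\`ag paths, obtained via a tightness argument tailored to the absorbing boundary at $\{x=0\}$. Pairing $\mu^N$ with the bounded, Lipschitz test function $q_{u,m}(s,\cdot)$ and using a leave-one-out comparison yields $\bar q^{1,N}_s := \frac{1}{N-1}\sum_{j\ne 1}\hat q^j_s\1_{s<\hat\tau^j} \to \bar q(s)$ in probability for a.e.\ $s$, hence also in $L^2([0,T]\times\Omega)$ by uniform boundedness.

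For the lower bound, substituting $\bar q^{1,N}\to\bar q$ in the integrand of $\J_c^{1,N}(\hat q^1,\ldots,\hat q^N)$ and applying bounded convergence yields
\begin{equation*}
\J_c^{1,N}(\hat q^1,\ldots,\hat q^N) \longrightarrow \E\!\left\{\int_0^T e^{-rs}(1-\kappa\bar q(s)-\hat q^1_s)\hat q^1_s\1_{s<\hat\tau^1}\,ds + e^{-rT}u_T(\hat X^1_{\hat\tau^1})\right\} = \int_0^L u(0,x)\,dm_0(x),
\end{equation*}
the last equality by Lemma~\ref{Lemma-optimal:control-Cournot}, since $\hat X^1$ has the law of the optimal mean-field trajectory $X^{\rho^*}$. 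For the upper bound, fix $q^1\in\mathbb A_c$ and compare with the mean-field functional $\J_c(q^1;\bar q)$ obtained by replacing $\bar q^{1,N}$ with $\bar q$ in $\J_c^{1,N}(q^1;\hat q^2,\ldots)$ while using the same pathwise $X^1$. One may restrict the supremum to $q^1_s\le 1$, since truncation at $1$ improves both the running profit (negative when $q^1>1$) and, by monotonicity of $u_T$ from \eqref{A1}, the terminal reward; Cauchy--Schwarz then gives
\begin{equation*}
\bigl|\J_c^{1,N}(q^1;\hat q^2,\ldots) - \J_c(q^1;\bar q)\bigr| \le \kappa\sqrt{T}\,\bigl\|\bar q - \bar q^{1,N}\bigr\|_{L^2([0,T]\times\Omega)} \longrightarrow 0
\end{equation*}
uniformly in $q^1$, crucially because $\bar q^{1,N}$ does not depend on $q^1$ (the feedbacks $\hat q^j=q_{u,m}(s,\hat X^j_s)$ for $j\ne 1$ involve only $\hat X^j$). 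Taking the supremum and invoking $\sup_{q^1\ge 0}\J_c(q^1;\bar q) = \int u(0,\cdot)\,dm_0$ from Lemma~\ref{Lemma-optimal:control-Cournot} closes the upper bound.

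The main obstacle is the M1-tightness and identification of the limit for $(\mu^N)$: the abrupt exits at $\{x=0\}$ create atomic jumps in the empirical measure that obstruct the stronger J1 and uniform topologies, while the coupling through controls rather than through states prevents direct appeal to classical propagation-of-chaos. Once this tightness, together with the leave-one-out refinement for $\bar q^{1,N}$, is established, the rest of the argument assembles quickly via Lemma~\ref{Lemma-optimal:control-Cournot} and elementary bounded-convergence estimates.
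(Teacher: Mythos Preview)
Your overall architecture matches the paper's: reduce to $i=1$ by exchangeability, use the convergence $\hat\nu^N\Rightarrow m$ (Lemma~\ref{propagation-of-chaos-lemma}) together with the leave-one-out correction to get $\bar q^{1,N}\to\bar q$ in $L^2([0,T]\times\Omega)$, and then sandwich $\J_c^{1,N}$ against the mean-field value via Lemma~\ref{Lemma-optimal:control-Cournot}. The paper packages this as a contradiction argument, but that is cosmetic.

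The substantive gap is the truncation step. You assert that replacing an arbitrary $q^1\in\mathbb A_c$ by $q^1\wedge 1$ can only increase $\J_c^{1,N}(q^1;\hat q^j)$, citing that $(1-\kappa\bar q - q)q$ is decreasing for $q\ge 1/2$ and $u_T$ is nondecreasing. But truncating the \emph{feedback} changes the trajectory and (by comparison) postpones absorption, $\tau'^1\ge\tau^1$. On $[\tau^1,\tau'^1)$ the original control contributes zero to the running reward, whereas the truncated control contributes $(1-\kappa\bar q^{1,N}_s - q'^1_s)q'^1_s$, which is generically negative (take $q'^1$ near $1$; then the integrand is $-\kappa\bar q^{1,N}_s\le 0$, and $\kappa>0$ is arbitrary). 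There is no Markovian way to "switch off" $q'^1$ after $\tau^1$, since $\tau^1$ is a functional of the \emph{original} trajectory. So the inequality $\J_c^{1,N}(q^1\wedge 1;\hat q^j)\ge \J_c^{1,N}(q^1;\hat q^j)$ is not established, and restricting the supremum to $\{q^1\le 1\}$ is not justified as written.

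The fix is the paper's device, which slots directly into your direct framing. First, from the dynamics one has unconditionally $\E\!\int_0^{\tau^1}q^1_s\,ds\le L$. Second, the $\ee$-Nash inequality is trivial for those $q^1$ with $\J_c^{1,N}(q^1;\hat q^j)\le \J_c^{1,N}(\hat q^1,\ldots,\hat q^N)$; for the remaining $q^1$ one reads off from the definition of $\J_c^{1,N}$ that
\[
e^{-rT}\,\E\!\int_0^{\tau^1}(q^1_s)^2\,ds \;\le\; \|u_T\|_\infty + (1+\kappa)\,\E\!\int_0^{\tau^1} q^1_s\,ds \;-\; \J_c^{1,N}(q^1;\hat q^j)\;\le\; C(T,\kappa,\|u_T\|_\infty,L),
\]
uniformly in $N$ and in such $q^1$. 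Your Cauchy--Schwarz estimate then goes through with this $L^2$ bound in place of $\|q^1\|_\infty\le 1$, and the rest of your argument is correct.
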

	The proof of Theorem \ref{theorem:wellposedness} is given in Section \ref{section2}, while Section \ref{section::application} is devoted to the proof of Theorem \ref{Theorem-epsilon::Nash-Cournot}.

\subsection*{Acknowledgement}
The first author was supported by the National Science Foundation under NSF Grant DMS-1612880.
The second author was supported by LABEX MILYON (ANR-10-LABX-0070) of Universit{\'e} de Lyon, within the program "Investissements d'Avenir" (ANR-11-IDEX-0007) operated by the French National Research Agency (ANR), and partially supported by project (ANR-16-CE40-0015-01) on Mean Field Games.


\section{Analysis of the MFG system}\label{section2}
This section is devoted to the analysis of \eqref{MFG-limit-1}.
We begin by establishing some preliminary estimates having to do with (i) the existence and regularity of $q_{u,m}$ defined in \eqref{MFG-quantity-def-initial}, and (ii) some regularity properties of weak solutions to the Fokker-Planck equation.
Then we prove some a priori bounds on solutions of the system \eqref{MFG-limit-1}, which gives way to our proof of Theorem \ref{theorem:wellposedness} by means of classical fixed point theory.

\subsection{Preliminary estimates} \label{section2: Preliminary estimates}
We start by giving an alternative convenient expression for the production rate function $q_{u,m}$. We aim to write $q_{u,m}$ as a functional of $u_x, m$ and the market price function $p_{u,m}$, that is defined by  \cite{chan2015bertrand}:
\begin{equation}\label{MFG-price-def-initial}
p_{u,m}(t,x) := 1-(q_{u,m}(t,x)+\k \bar q (t)).
\end{equation}
The latter expression means that the price $p_{u,m}(t,x)$ received by an atomic player with reserves $x$ at time $t$, is a linear and nonincreasing function, of the player's production rate $q_{u,m}(t,x)$, and the aggregate production rate across all producers $\bar q(t)$. For any $\mu \in \M(\bar Q)$, we define
\begin{equation}\label{coefficients-defintion:a,c,eta}
a(\mu) := \frac{1}{1+\k\eta(\mu)}; \quad c(\mu):=1-a(\mu); \quad \eta(\mu):= \int_0^L \dd |\mu|
\end{equation}
and set
\begin{subequations}
\begin{equation}\label{expression::market::price-0}
\overline{p}(t) := \frac{1}{\eta(m(t))}\int_0^L p_{u,m}(t,x)m(t,x) \dd x.
\end{equation}
By integrating \eqref{MFG-price-def-initial} with respect to $m$ and after a little algebra one recovers the following identity
\begin{equation*}
a\left( m(t) \right)+c\left( m(t) \right)\overline{p}(t) = 1 - \k \bar{q}(t),
\end{equation*}
which entails
\begin{equation}\label{expression::market::price}
p_{u,m}(t,x) = a\left( m(t)\right)+c\left( m(t) \right)\overline{p}(t) -q_{u,m}(t,x),
\end{equation}
and
\begin{equation}\label{MFG-quantity-def}
q_{u,m}(t,x)= \frac{1}{2} \left\{ a\left(m(t)\right)+c\left( m(t) \right)\overline{p}(t) -u_{x}(t,x)  \right\}^{+}.
\end{equation}
\end{subequations}
This duality is also known as \emph{Bertrand and Cournot equivalence}, and expresses the fact that the problem of controlling the rate of production by anticipating global production, is equivalent to the problem of controlling the selling price by anticipating the average price in the market and the rate of active producers. We omit the details and refer to \cite[Section B.2]{chan2015bertrand}.  For convenience, we shall often use \eqref{MFG-quantity-def} as a definition for $q_{u,m}$.

In contrast to the systems studied in \cite{chan2015bertrand, graber2015existence, graber2017variational}, $p_{u,m}$ has no explicit formula and is only defined as a fixed point through \eqref{expression::market::price-0}-\eqref{MFG-quantity-def}. The following Lemma makes that statement clear and point out a few facts on the market price function.

\begin{lemma}\label{fixed::point::market-price}
Let $u\in L^{\infty}\left(0,T; \C^1(\bar Q)\right)$, $m \in L^1(Q_T)_+$, and $\k>0$. Then the market price function $p_{u,m}$ is well-defined through \eqref{expression::market::price-0}-\eqref{MFG-quantity-def},  belongs to $L^{\infty}(0,T; \C(\bar Q))$, and satisfies 
\begin{equation}\label{estimation:price-1}
-\| u_x\|_{\infty} \leq p_{u,m}\leq 1.
\end{equation}
Moreover, if $u_{x}$ is non-negative, then $p_{u,m}$ is non-negative as well. 
\end{lemma}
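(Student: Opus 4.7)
My plan is to reduce the coupled definitions \eqref{expression::market::price-0}--\eqref{MFG-quantity-def} to a scalar self-consistency equation for the aggregate $\bar q(t)$ and then analyse the resulting one-dimensional fixed-point problem pointwise in $t$. The identity $1-\k\bar q(t)=a(m(t))+c(m(t))\bar p(t)$ shows that, for each $t\in(0,T)$, the joint system is equivalent to requiring $\bar q(t)\in[0,\infty)$ to solve
\begin{equation} \label{eq:plan-fp}
\bar q \;=\; G_t(\bar q) \;:=\; \int_0^L \tfrac{1}{2}\bigl(1-\k\bar q - u_x(t,x)\bigr)^{+} m(t,x)\dd x.
\end{equation}
Once \eqref{eq:plan-fp} has a unique solution, $q_{u,m}$ and $p_{u,m}$ are recovered through their definitions \eqref{MFG-quantity-def-initial}--\eqref{MFG-price-def-initial}, and the rest of the lemma reduces to pointwise inequalities.

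\textbf{Step 1: existence, uniqueness and measurability of $\bar q(t)$.} For a.e.\ $t$, the map $\lambda\mapsto G_t(\lambda)$ is continuous, non-increasing, nonnegative, and bounded above by $\tfrac{1}{2}(1+\|u_x\|_\infty)\|m(t)\|_{L^1}$. Therefore $H_t(\lambda):=\lambda-G_t(\lambda)$ is continuous, strictly increasing, satisfies $H_t(0)\leq 0$, and tends to $+\infty$; the intermediate value theorem yields a unique $\bar q(t)\geq 0$ solving \eqref{eq:plan-fp}. (Alternatively, when $\eta(m(t))>0$, one can apply Banach's fixed-point theorem in the $\bar p$-variable: the resulting self-map has slope in $\bigl[c(m(t))/2,c(m(t))\bigr]$ with $c(m(t))<1$, giving a strict contraction.) Measurability of $t\mapsto \bar q(t)$ follows from $\bar q(t)=\sup\{\lambda\geq 0:H_t(\lambda)<0\}$ together with joint measurability of $(t,\lambda)\mapsto H_t(\lambda)$.

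\textbf{Step 2: regularity and pointwise bounds.} For a.e.\ $t$, $u_x(t,\cdot)\in\C(\bar Q)$, so $q_{u,m}(t,\cdot)$ and hence $p_{u,m}(t,\cdot)$ are continuous on $\bar Q$. The bound $p_{u,m}\leq 1$ follows immediately from $q_{u,m}\geq 0$ and $\bar q\geq 0$ in \eqref{MFG-price-def-initial}. For the lower bound I split cases pointwise in $(t,x)$. If $u_x(t,x)\leq 1-\k\bar q(t)$, a direct computation gives $p_{u,m}(t,x)=q_{u,m}(t,x)+u_x(t,x)\geq u_x(t,x)\geq -\|u_x\|_\infty$. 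If instead $u_x(t,x)>1-\k\bar q(t)$, then $q_{u,m}(t,x)=0$ and $p_{u,m}(t,x)=1-\k\bar q(t)$, so the conclusion reduces to the a priori bound $\k\bar q(t)\leq 1+\|u_x\|_\infty$. I prove this last inequality by contradiction: were it false, then $1-\k\bar q(t)-u_x(t,y)<-\|u_x\|_\infty-u_x(t,y)\leq 0$ for every $y\in\bar Q$, forcing $q_{u,m}(t,\cdot)\equiv 0$ and hence $\bar q(t)=G_t(\bar q(t))=0$ by \eqref{eq:plan-fp}, incompatible with $\k\bar q(t)>1+\|u_x\|_\infty$. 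The uniform bounds in $t$ yield $p_{u,m}\in L^\infty(0,T;\C(\bar Q))$.

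\textbf{Step 3: nonnegativity when $u_x\geq 0$, and the main obstacle.} In case~1 above, $p_{u,m}=q_{u,m}+u_x\geq 0$ trivially. In case~2 it suffices to show $\bar q(t)\leq 1/\k$; I argue by contradiction: if $\bar q(t)>1/\k$ then $1-\k\bar q(t)-u_x(t,y)<-u_x(t,y)\leq 0$ for every $y$, which forces $G_t(\bar q(t))=0$ and hence $\bar q(t)=0$, impossible. The only genuinely non-routine point in the whole argument is the extraction of these two a priori ceilings on $\bar q(t)$ without circularity; everything else (monotonicity, continuity, measurability, case analysis) is bookkeeping.
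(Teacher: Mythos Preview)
Your proof is correct, and it takes a genuinely different route from the paper's. The paper sets up a Banach fixed-point argument directly in the infinite-dimensional space $L^\infty(0,T;\C(\bar Q))$ for the price function $p$: writing $p=\Lambda(u_x,m,p):=\ell(m,p)-\tfrac12(\ell(m,p)-u_x)^+$ with $\ell(m,p)=a(m)+c(m)\bar p$, it checks that $p\mapsto\Lambda(u_x,m,p)$ is a contraction with constant $\k/(1+\k)$, and obtains the bounds \eqref{estimation:price-1} and the sign condition by showing that the sets $\{p\le 1\}$, $\{p\ge -\|u_x\|_\infty\}$, $\{p\ge 0\}$ are invariant under $\Lambda$. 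You instead collapse the problem, pointwise in $t$, to the scalar equation $\bar q=G_t(\bar q)$ and exploit monotonicity plus the intermediate value theorem; the bounds on $p_{u,m}$ then follow from a clean case split together with two contradiction-style ceilings on $\bar q(t)$. Your argument is more elementary and avoids any potential awkwardness with the quotient defining $\bar p$ when $\eta(m(t))=0$. The paper's approach, on the other hand, produces as a by-product the Lipschitz estimates \eqref{eq:ell p}--\eqref{eq:F m} for $\Lambda$, which it reuses later (in Lemma~\ref{holder-estimates} and in the proof of Theorem~\ref{theorem:wellposedness}) to propagate H\"older regularity and to pass to limits; that machinery would have to be rebuilt separately if one followed your route throughout.
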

\begin{proof}
Let $f:\bb{R}^2 \to \bb{R}$ be given by $f(x,y) = x - \frac{1}{2}(x-y)^+$. Note that $f$ is 1-Lipschitz in the first variable, and $\frac{1}{2}$-Lipschitz in the second. 
For any $p,w \in \X:=L^{\infty}(0,T; \C(\bar Q))$, define
$$
\ell(m,p)(t):= a\left( m(t) \right)+c\left( m(t) \right)\overline{p}(t),\quad \mbox{ where }\quad
\overline{p}(t):=\frac{1}{\eta(m(t))}\int_0^{L} p(t,x) m(t,x) \dd x,
$$
and 
$$\La(w,m,p)(t,x) := f(\ell(m,p)(t),w(t,x)).$$
We note the following inequalities for future reference:
\begin{subequations}
\begin{align}
\label{eq:ell p}
\left|\ell(m,p)(t)-\ell(m,p')(t)\right| &\leq \frac{\k}{1+\k}\left\|p(t,\cdot)-p'(t,\cdot)\right\|_\infty,\\
\label{eq:F p}
\left\|\La(w,m,p)(t,\cdot)-\La(w,m,p')(t,\cdot)\right\|_\infty &\leq \frac{\k}{1+\k}\left\|p(t,\cdot)-p'(t,\cdot)\right\|_\infty,\\
\label{eq:F w}
\left|\La(w,m,p)(t,x)-\La(w',m,p)(t,x)\right| &\leq \frac{1}{2}\left|w(t,x)-w'(t,x)\right|,\\
\label{eq:F m}
\left|\La(w,m,p)(t,x)-\La(w,m',p)(t,x)\right| &\leq \left|\ell(m,p)(t) - \ell(m',p)(t)\right|.
\end{align}
\end{subequations}

We aim to use Banach fixed point Theorem to show that
\begin{equation}\label{fixed-point-problem:lemma21}
p=a(m)+c(m)\bar p -\frac{1}{2}\left\{a(m)+c(m)\bar p -u_x \right\}^+
\end{equation}
has a unique solution $p_{u,m} \in \X$, which satisfies \eqref{estimation:price-1}.
For any $p \in \X$, let us set
$$
\psi(p):= \La(u_x,m,p)=a(m)+c(m)\bar p -\frac{1}{2}\left\{a(m)+c(m)\bar p -u_x \right\}^+.
$$
Observe that $\psi(\X)\subset \X$, and  $p\leq 1$ entails $\psi(p)\leq 1$. Moreover, if we suppose that $p\geq -\| u_x\|_{\infty}$, then it holds that
$$
\psi(p) \geq -c(m)\| u_x\|_{\infty},
$$
so that $\psi(p)\geq -\| u_x\|_{\infty}$, since $c(m)<1$.
On the other hand, by appealing to \eqref{eq:F p} we have
$$
\left\| \psi(p_{1})-\psi(p_{2}) \right\|_{\X} \leq  \frac{\k}{1+\k} \left\| p_1-p_2  \right\|_{\X} \quad \forall p_1,p_2 \in \X.
$$
Therefore by invoking Banach fixed point Theorem, and the estimates above we deduce the existence of a unique solution $p_{u,m} \in \X$ to problem \eqref{fixed-point-problem:lemma21} satisfying \eqref{estimation:price-1}.

When $u_x$ is non-negative, note that $ p\geq 0$ entails $\psi(p)\geq 0$, so that the same fixed point argument yields
$ p_{u,m}\geq 0$.
\end{proof}

Next, we collect some facts related to the Fokker-Planck equation \eqref{eq:fp}-\eqref{eq:fp:bc}.

\begin{lemma}[regularity of $\eta$] \label{continuity of measure}
	Let $m$ be a weak solution to \eqref{eq:fp}-\eqref{eq:fp:bc}, starting from some $m_0$ satisfying \eqref{Aa1}. 
	Suppose that $b$ is bounded, 
	and satisfies \eqref{assumption-SDEs}. Then the map $t\to \eta(t):=\eta(m(t))$ is continuous on $[0,T]$.
	
	Moreover, if in addition $m_0$ belongs to $L^1(Q)$, then we have:
	\begin{subequations}
	\begin{enumerate}[ label=(\roman*)]	
	\item the function $t \to \eta(t)$ is locally H\"older continuous on $(0,T]$; namely, 
	 there exists $\gamma>0$ such that
	 \begin{equation} \label{eq:eta holder}
	\left|\eta(t_1)-\eta(t_2) \right| \leq C(t_0,\|b\|_\infty) \left|t_1-t_2 \right|^{\gamma} \quad \forall t_1,t_2 \in [t_0,T]
	\end{equation}
	for all $t_0 \in (0,T)$;
	\item for any $\a>0$ and $\phi \in \C^{\a}(\bar Q)$, there exists $\b>0$ such that
	\begin{equation} \label{eq: holder-dual holder}
	\left| \int_0^L \phi(x) \left(m(t_1,x)-m(t_2,x) \right) \dd x \right| 
	\leq C(t_0,\|b\|_\infty, \| \phi\|_{\C^{\a}}) |t_1-t_2|^{\b} \quad \forall t_1,t_2 \in [t_0,T]
	\end{equation}
	for all $t_0 \in (0,T)$.
	\end{enumerate}
	\end{subequations}
\end{lemma}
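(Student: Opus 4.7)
The backbone of the proof will be a probabilistic representation of $m$. Under \eqref{assumption-SDEs} with $b$ bounded, the Skorokhod problem \eqref{skorokhod-problem} started from $V \sim m_0$ (independent of $W$) has a unique strong solution $(X_t)_{t \in [0,T]}$; set $\tau := \inf\{t \geq 0 : X_t \leq 0\} \wedge T$. By uniqueness of weak solutions to \eqref{eq:fp}--\eqref{eq:fp:bc} (recalled just before the lemma), $m(t,\cdot)$ coincides with the sub-probability density of $X_t$ on $\{\tau > t\}$, so that
$$\eta(t) = \Pp(\tau > t), \qquad \int_0^L \phi(x)\, m(t,x) \dd x \;=\; \E\bigl[\phi(X_t) \mathds{1}_{\tau > t}\bigr]$$
for every $\phi \in \C(\bar Q)$. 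Continuity of $\eta$ on $[0,T]$ then reduces to showing $\Pp(\tau = t)=0$ for every $t$: at $t=0$ this follows from \eqref{Aa1} (so $V>0$ a.s.), and for $t>0$ the representation \eqref{skorokhod-problem-equation-formula}--\eqref{skorokhod-problem-equation} together with standard non-degeneracy for the Brownian-driven SDE $(Y_t)$ gives $X_t$ an absolutely continuous law, whence $\Pp(\tau = t) \leq \Pp(X_t = 0) = 0$.

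For the H\"older estimate (i) I would first exploit the added assumption $m_0 \in L^1(Q)$ to secure $\|m\|_{L^\infty([t_0,T]\times \bar Q)} \leq M(t_0, \|b\|_\infty)$ for every $t_0 \in (0,T)$; this is the standard regularization effect of a linear parabolic equation with bounded coefficients, obtained by combining \cite[Prop.~3.10]{porretta2015weak} with Nash--Moser / De Giorgi type bounds. Then for $t_0 \leq t_2 < t_1 \leq T$, setting $h := t_1 - t_2$, the strong Markov property at $t_2$ yields
$$\eta(t_2) - \eta(t_1) \;=\; \Pp\bigl(\tau \in (t_2, t_1]\bigr) \;=\; \int_0^L \Pp\bigl(\tau^{t_2,x} \leq t_1\bigr)\, m(t_2,x) \dd x,$$
where $\tau^{t_2,x}$ is the hitting time of $\{0\}$ by the reflected SDE restarted at $x$ at time $t_2$. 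Splitting this integral at $\ee \in (0,L]$ bounds the small-$x$ piece by $M(t_0)\,\ee$, while for $x \geq \ee$ a Bernstein-type exponential estimate on $\sigma W + \int b \dd s$---noting that the reflection at $L$ can only decrease the probability of reaching $0$---gives $\Pp(\tau^{t_2,x} \leq t_1) \leq C\exp\bigl(-c(\ee - \|b\|_\infty h)^2/h\bigr)$. Choosing $\ee = \sqrt h$ for $h$ sufficiently small yields $|\eta(t_1) - \eta(t_2)| \leq C(t_0, \|b\|_\infty)\sqrt h$, i.e.~$\gamma = 1/2$ in \eqref{eq:eta holder}.

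For (ii) I would decompose
$$\phi(X_{t_1}) \mathds{1}_{\tau > t_1} - \phi(X_{t_2}) \mathds{1}_{\tau > t_2} \;=\; \mathds{1}_{\tau > t_1}\bigl(\phi(X_{t_1}) - \phi(X_{t_2})\bigr) \;-\; \mathds{1}_{\tau \in (t_2,t_1]}\phi(X_{t_2}).$$
Taking expectations, the second summand is bounded by $\|\phi\|_\infty \, |\eta(t_1) - \eta(t_2)|$ and is H\"older of exponent $1/2$ by the previous step. For the first summand the H\"older seminorm of $\phi$ combined with the moment estimate $\E|X_{t_1} - X_{t_2}|^\a \leq C|t_1 - t_2|^{\a/2}$---obtained by the triangle inequality on \eqref{skorokhod-problem}, BDG applied to the Brownian increment, and \eqref{inequalities-growth-T} (more precisely its twin upper bound $\xi^X_{t+h}-\xi^X_t \leq \sup_{v\in[0,h]}(Y_{t+v}-Y_t)^+$ for the local-time increment)---produces a $|t_1-t_2|^{\a/2}$ contribution. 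Hence \eqref{eq: holder-dual holder} holds with $\b = \min(\a/2, 1/2)$.

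The main obstacle I anticipate is the $L^\infty$ bound $\|m(t,\cdot)\|_\infty \leq M(t_0)$ on $[t_0,T]$, which is precisely the place where the $L^1$ assumption on $m_0$ is genuinely used: without it the small-$x$ piece in the split above need not vanish with $\ee$, and one would be reduced to a weaker modulus of continuity (and only along the support of a sufficiently regular approximating sequence). The exponential hitting-time estimate itself is classical once one observes that the reflection at the far boundary $L$ is favorable and can simply be discarded.
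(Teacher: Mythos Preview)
Your proposal is correct and, for part (i), takes a genuinely different route from the paper. Both proofs start from the same probabilistic representation $m(t,\cdot)=\mathrm{Law}(X_t;\,\tau>t)$ and agree on part (ii), but diverge in how they quantify the boundary mass near $x=0$. The paper changes measure via Girsanov so that $X$ becomes a reflected Brownian motion, computes the resulting heat-equation density by Fourier series to obtain $\int_0^L(1-\phi_\ee)m(t,\cdot)\,\dd x\le C(t_0)\,\ee^{1/2k}$, and then couples this with Porretta's $W^{-1}_s$ time-H\"older estimate $\|m(t_1)-m(t_2)\|_{W^{-1}_s}\le C|t_1-t_2|^{1-1/s}$ via a cut-off argument; optimising over $\ee$ yields some $\gamma<(s-1)/(s+2)$. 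You instead invoke parabolic $L^1\!\to\! L^\infty$ smoothing to bound $m(t_2,\cdot)$ in $L^\infty$ on $[t_0,T]$, then apply the Markov property and a Bernstein-type hitting estimate, obtaining the sharper exponent $\gamma=1/2$. Your continuity argument (atomlessness of $\tau$ via $\Pp(X_t=0)=0$) is also cleaner than the paper's explicit right-continuity estimate. Two remarks: first, the $L^\infty$ bound on $m$ is the one step that goes beyond what the paper establishes, so you should state precisely which result (e.g.\ Aronson-type Gaussian bounds for the fundamental solution with mixed Dirichlet/Neumann boundary) you are invoking; second, the paper's boundary-concentration estimate is not merely a device for this lemma but is reused verbatim in Section~\ref{section::application} (Lemmas~\ref{mass-concentration-origin} and~\ref{mass-loss-increment}), so if you adopt your approach you will need to recover an analogous bound there---which your exponential hitting estimate in fact provides.
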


\begin{remark}
	This lemma shows that $t \mapsto m(t)$ is locally H\"older continuous in time in $(0,T]$ with respect to the $(\C^\alpha)^*$ topology; this is useful later to get equicontinuity for construction of a fixed point (cf.~Section \ref{section 2: Well-posedness}).
	Our method of proof does not allow us to show H\"older continuity on all of $[0,T]$, because it is based on heat kernel estimates, which degenerate as $t \to 0$ (cf.~Equation \eqref{eq:change of probability estimate-bis}).
	However, we find it difficult to construct a counterexample.
\end{remark}

\begin{proof} 
  	The proof requires several steps and lies on the probabilistic interpretation of $ m $ 
	which we recall briefly here, and use in other parts of this paper.
	
	\emph{Step 1 (probabilistic interpretation):} 
	Consider the reflected diffusion process governed by 
	\begin{subequations}
	\begin{equation}\label{probabilistic-charact-1}
	\dd X_t = - b(t,X_t) \dd t + \sigma \dd W_t - \dd \xi_t^X, \quad X_0 \sim m_0,
	\end{equation}
	where $X_0$ is $\F_0$-measurable, 
	$(W_t)_{t\in[0,T]}$ is a $\mathbb F$-Wiener process that is independent of $X_0$, and set
	\begin{equation}\label{probabilistic-charact-2}
	\tau := \inf\{t\geq 0 : X_t \leq 0\} \wedge T.
	\end{equation}
	By virtue of the regularity assumptions on $b$, equation \eqref{probabilistic-charact-1} is well-posed 
	in the classical sense. 
	Furthermore, since the process $(\xi_t^X)_{t\geq 0}$ is monotone, $(X_t)_{t\in[0,T]}$ is a continuous semimartingale.  
	Hence, by means of It\^o's rule and the optional stopping theorem, we have
	for any test function $\phi \in \C_c^\infty([0,T) \times \overline{Q})$ satisfying \eqref{eq:weak fp:bc}:
	$$
	\E\left[\phi(0,X_0) \right] = \E \left[\int_0^\tau \left( -\phi_t(v,X_v)-\frac{\sigma^2}{2}\phi_{xx}(v,X_v)+\phi_x(v,X_v)b(v,X_v)  \right) \dd v \right],
	$$
	and thus the law of $X_t$ is a weak solution to the Fokker-Planck equation.
	The function $b$ being bounded, one sees that
	$$
	\E\left[ \int_0^T b(s,X_s)^2 \dd s \right] <\infty.
	$$
	Therefore, by virtue of the uniqueness for \eqref{eq:fp}-\eqref{eq:fp:bc} (cf. Proposition \ref{uniqueness:measure-valued}),
	 we obtain: 	
	\begin{equation}\label{probabilistic-charact-3}
	\int_{A} m(t,x)\dd x = \Pp(t< \tau; X_t\in A)
	\end{equation}
	for every Borel set $A\in \bar Q$ and for a.e. $t\in (0,T)$.
	\end{subequations}

	\emph{Step 2:} Now, let us  show that $t\to \Pp(t< \tau)$ is right continuous on $[0,T]$.
	In fact, we have for any $\e>0$ and $t\in[0,T]$
	\begin{subequations}
	\begin{eqnarray}\label{estimate::choice:of:ee-bis}
	\Pp(t< \tau)-\Pp(t+h< \tau) &=& \bb{P}(t+h \geq \tau; t < \tau)\\ \nonumber
	&\leq& \bb{P}(t+h \geq \tau; X_t \geq \e) + \bb{P}(t < \tau; X_t < \e).
	\end{eqnarray}
	On the one hand, for every $t\in[0,T]$ 
	\begin{equation}\label{estimate::choice:of:ee-bis:0}
	\lim_{\epsilon \to 0^+} \Pp(t<\tau; X_t < \epsilon) 
	\leq \lim_{\epsilon \to 0^+} \Pp\left(0<X_t <\e \right) =0, 
	\end{equation}
	thanks to the bounded convergence theorem.
	On the other hand
	\begin{multline*}
	\bb{P}(t+h \geq \tau; X_t \geq \e)
	\leq \bb{P}\left(\inf_{v \in [t,t+h]} X_v - X_t \leq -\e\right)\\
	\leq \bb{P}\left(\inf_{v \in [0,h]} \sigma (W_{t+v} - W_t) + (\xi_t^X-\xi_{t+h}^X) \leq -\e + h \|b\|_\infty  \right),
	\end{multline*}
	where we have used the fact that the local time is nondecreasing and $b$ is bounded. 
	Furthermore, by using \eqref{inequalities-growth-T}, it holds that
	$$
	\xi_t^X-\xi_{t+h}^X \geq \inf_{v \in [0,h]}(Y_t - Y_{t+v}) \geq \sigma \inf_{v \in [0,h]}(W_t - W_{t+v})- h \| b\|_{\infty} .
	$$
	Therefore
	\begin{equation*}
	\bb{P}(t+h \geq \tau; X_t \geq \e)
	\leq \bb{P}\left(\sup_{v \in [0,h]}  B_v -\inf_{v \in [0,h]}  B_v \geq  \frac{\e -2h \|b\|_\infty }{\sigma} \right),
	\end{equation*}
	where $(B_t)_{t\geq 0}$ is a Wiener process. 
	
	Now, choose $\e = \e(h) := h^{1/2}\log(1/h)$. We have $\e(h) \to 0$ as $h \to 0^+$, and
	by using Markov's inequality and the distribution of the maximum of Brownian motion we get:
	\begin{equation}\label{estimate::choice:of:ee}
	\bb{P}(t+h \geq \tau; X_t \geq \e)
	\leq   \frac{2\sigma}{\e(h) -2h \|b\|_\infty} \E\left| B_h \right| \leq \frac{2 \sigma }{\log(1/h) -2\|b\|_\infty h^{1/2}}.
	\end{equation}
	Thus $0 \leq \Pp(t< \tau)-\Pp(t+h< \tau) \to 0$ as $h \to 0^+$. 
	\end{subequations}

	 \emph{Step 3 (H\"older estimates):}
	Now, we prove \eqref{eq:eta holder}-\eqref{eq: holder-dual holder}. 
	At first, note that \eqref{probabilistic-charact-3} entails
	\begin{equation}\label{probabilistic-charact-strong}
	 \int_0^L \phi(x) m(t,x)\dd x = \E\left[ \phi(X_{t})\1_{t<\tau} \right]
	\end{equation}
	for a.e. $t\in (0,T)$ and for any $\phi \in \C(\bar Q)$. 
	Actually \eqref{probabilistic-charact-strong} holds for every $t\in[0,T]$, 
	since the RHS  and LHS of \eqref{probabilistic-charact-strong} are both right continuous on $[0,T]$, 
	and $m_0$ is supported on $(0,L]$.
	Indeed, on the one hand $t\to \int_0^L \phi(x) m(t,x)\dd x$ 
	is continuous on $[0,T]$ for any continuous function $\phi$ on $\bar Q$, since $m \in \C([0,T]; L^1(Q))$
	(cf. \cite[Theorem 3.6]{porretta2015weak}).
	On the other hand, for any $\phi \in \C(\bar Q)$
	\begin{multline}\label{probability-continuity-step3}
	\E\left| \phi(X_{t+h})\1_{t+h<\tau} -\phi(X_{t})\1_{t<\tau}  \right|\\
	 \leq \| \phi\|_{\infty}(\Pp(t< \tau)-\Pp(t+h< \tau) )
	+ \E\left| \phi(X_{t+h}) -\phi(X_{t})  \right|, 
	\end{multline}
	so that 
	$$
	\lim_{h\to 0^+}\E\left| \phi(X_{t+h})\1_{t+h<\tau} -\phi_\e(X_{t})\1_{t<\tau}  \right|=0
	$$
	thanks to \eqref{estimate::choice:of:ee-bis}-\eqref{estimate::choice:of:ee}, and the bounded convergence theorem.

	Now, let us fix $\e>0$ and define $\phi_\e = \phi_\e(x)$ to be a smooth cut-off function on $[0,L]$, 
	which satisfies the following conditions:
	\begin{equation}\label{cutt-off-function:specifications}
	 0\leq \phi_\e \leq 1; \ \ 0 \leq \phi'_\e \leq 2/\e; \ \ \phi_\e \1_{[0,\e]}=0; \ \ \phi_\e \1_{[2\e,L]}=1.
	\end{equation}
	As a first step, we aim to derive an estimation of the concentration of mass at the origine. Namely, we want to 
	show that for an arbitrary $k >1$, 
	\begin{equation} \label{eq:boundary probability}
	\int_0^L (1-\phi_\e(x)) m(t,x)\dd x \leq C(k,\|b\|_\infty) \left(1-e^{-\pi^2 t/4L^2} \right)^{-1/2k}\e^{1/2k} \quad \forall t\in (0,T].
	\end{equation}
	Given \eqref{probabilistic-charact-strong}, this is equivalent to showing that
	\begin{equation} \label{eq:boundary probability 2}
	 \E\left[ (1-\phi_\e(X_{t}))\1_{t<\tau} \right] \leq C(k,\|b\|_\infty) \left(1-e^{-\pi^2  t/4L^2}\right)^{-1/2k}\e^{1/2k} 
	 \quad \forall t\in (0,T]
	\end{equation}
	holds for any $k>1$.
	Apply Girsanov's Theorem with the following change of measure:
	$$
	\left. \frac{\dd \Qq}{\dd \Pp}\right|_{\F_t} = \exp\left\{ - \sigma^{-1} \int_0^t b(s,X_s) \dd W_s 
	- \frac{\sigma^{-2}}{2} \int_0^t  b(s,X_s)^2 \dd s  \right\} 
	=: \Psi_t.
	$$
	Under $\Qq$, the process $(X_t)_{t\in[0,T]}$ is a reflected Brownian motion at $L$, 
	with initial condition $X_0$, thanks to \eqref{skorokhod-problem-equation-formula}.
	Moreover, by virtue of H\"older inequality, we have for every $k>1$:
	\begin{multline*}
	\E_{\Pp}\left[ (1-\phi_\e(X_{t}))\1_{t<\tau} \right]\\
	= \E_{\Qq}\left[ \Psi_t^{-1} (1-\phi_\e(X_{t}))\1_{t<\tau} \right]
	 \leq \E_{\Qq}[\Psi_t^{-k'}]^{1/k'} 
	\E_{\Qq}\left[  (1-\phi_\e(X_{t}))^k \1_{t<\tau} \right]^{1/k} \\
	\leq \E_{\Pp}[\Psi_t^{1-k'}]^{1/k'} \E_{\Qq}\left[  (1-\phi_\e(X_{t}))^k \1_{t<\tau} \right]^{1/k} \\
	\leq \E_{\Pp}\left[\exp\left\{ C(k,\sigma) \int_0^t b(s,X_s)^2 \dd s  \right\}\right]^{1/2k'} 
	\E_{\Qq}\left[  (1-\phi_\e(X_{t}))^k \1_{t<\tau} \right]^{1/k}.
	\end{multline*}
	Indeed, one checks that
	$$
	\E_{\Pp}[\Psi_t^{1-k'}]^{1/k'} \leq \E_{\Pp}\left[  Z_t \right]^{1/2k'} \E_{\Pp}\left[  \exp\left\{ 2\left(\frac{1-k'}{\sigma} \right)^2 \int_0^t b(s,X_s)^2 \dd s \right\} \right]^{1/2k'},
	$$
	where $(Z_{t})_{t\geq 0}$ is a super-martingale.
	Hence, using the fact that $b$ is bounded, we obtain
	\begin{equation} \label{eq:change of probability estimate}
	\E_{\Pp}\left[ (1-\phi_\e(X_{t}))\1_{t<\tau} \right] \leq 
	C(k,\|b\|_\infty)\E_{\Qq}\left[  (1-\phi_\e(X_{t}))^k \1_{t<\tau} \right]^{1/k}.
	\end{equation}	
	Now 
	$$
	\E_{\Qq}\left[  (1-\phi_\e(X_{t}))^k \1_{t<\tau} \right] = \int_0^L (1-\phi_\e(x))^k w(t,x)\dd x,
	$$
	where $w$ solves
	$$
	w_t = \frac{1}{2}w_{xx}, \ w(t,0) = 0, \ w_x(t,L) = 0, w|_{t=0} = m_0.
	$$
	We can compute $w$ via Fourier series, namely
	$$
	w(t,x) = \sum_{n\geq 1} A_n e^{-\lambda_n^2 t/2} \sin (\lambda_n x) , \quad  
	A_n := \frac{2}{L}\int_0^L \sin (\lambda_n y) \dd m_0(y), \quad \lambda_n := \frac{(2n-1)\pi}{2L}.
	$$
	Note that
	\begin{align*}
	\int_0^L (1-\phi_\e(x))^k w(t,x)\dd x
	&\leq (2\e)^{1/2} \|w(t,\cdot)\|_{L^2}\\
	&\leq \e^{1/2} \left(\sum_{n\geq 1} L |A_n|^2  e^{-\lambda_n^2  t}\right)^{1/2} \ \text{(Parseval)}\\
	&\leq \e^{1/2} \left(\frac{4}{L(1-e^{-\pi^2  t/4L^2})} \right)^{1/2}.
	\end{align*}
	So \eqref{eq:change of probability estimate} now yields
	\begin{equation}\label{eq:change of probability estimate-bis}
	\E_{\Pp}\left[ (1-\phi_\e(X_{t}))\1_{t<\tau} \right] \leq C(k,\|b\|_\infty) 
	\left(\frac{4}{L(1-e^{-\pi^2  t/4L^2 })} \right)^{1/2k}\e^{1/2k}
	\end{equation}
	which is \eqref{eq:boundary probability 2}.
	This in turn implies \eqref{eq:boundary probability}.
	
	
	Furthermore, note that for any $1<s<3/2$, 
	\begin{equation} \label{eq:m holder}
	\|m(t_1)-m(t_2)\|_{W^{-1}_s} \leq C\left(\|m_0\|_{L^1},\|m|b|^2\|_{L^1}\right) |t_1-t_2|^{1-1/s} \quad \forall t_1,t_2 \in [0,T],	
	\end{equation} 
	where $W^{-1}_{s}(Q)$ is the dual space of  $W_{0}^{1,s'}(Q) :=\left\{   v \in W_{s'}^1(Q)\ :\  v(0)=0 \right\}$.
	This claim follows from \cite[Proposition 3.10(iii)]{porretta2015weak}, where we obtain the estimate
	\begin{multline}
	\label{porretta-estimates}
	\|m\|_{L^\infty(0,T;L^1(Q))} + \|\nabla m\|_{L^s(Q_T)} + \|m\|_{L^v(Q_T)} + \|m_t\|_{L^s(0,T;W_s^{-1}(Q))}\\
	 \leq C\left(\|m_0\|_{L^1},\|m|b|^2\|_{L^1}\right)
	\end{multline}
	for any $s$ up to $3/2$ and $v$ up to 3.
	In particular, \eqref{eq:m holder} follows from the estimate on
	 $\|m_t\|_{L^s(0,T;W_s^{-1}(Q))}$. 
	 Now, fix $0 < t_1 \leq t_2 \leq T$, 	
	and let $\phi_\e$ be the cut-off function that is defined in \eqref{cutt-off-function:specifications}.
	Based on the specifications of \eqref{cutt-off-function:specifications}, observe that 
	$$
	\|\phi_\e\|_{W_{s'}^{1}} \leq C\e^{-1/s}.
	$$
	Since $\phi_\e$ satisfies Neumann boundary conditions at $x = L$ and Dirichlet at $x = 0$, 
	it is a valid test function and we can 		
	appeal to the estimates above to obtain for any $k > 1$,
	\begin{multline}\label{eq:boundary probability: holder}
	\left|\eta(t_1)-\eta(t_2)\right|  
	= \left| \int_{0}^{L} \left\{  (1-\phi_\e(x)) + \phi_\e(x)  \right\} (m(t_1,x) -  m(t_2,x))\dd x \right| \\
	\leq \int_0^{L} |1-\phi_\e(x)||m(t_1,x) -  m(t_2,x)|\dd x
	+ \left|\int_0^L \phi_\e(x)(m(t_1,x)-m(t_2,x))\dd x\right|\\
	\leq \int_0^{L} (1-\phi_\e(x))(m(t_1,x) +  m(t_2,x))\dd x
	+ \left|\int_0^L \phi_\e(x)(m(t_1,x)-m(t_2,x))\dd x\right|\\
	\leq C(k,\|b\|_\infty) \left(1-e^{-\pi^2\ t_1/4L^2} \right)^{-1/2k}\e^{1/2k} 
	+ \|\phi_\e\|_{W_{s'}^{1}}\left\|m(t_1)-m(t_2) \right\|_{W_s^{-1}}	\\
	\leq C(k,\|b\|_\infty) \left(1-e^{-\pi^2 t_1/4L^2} \right)^{-1/2k}\e^{1/2k} + C \e^{-1/s}|t_1-t_2|^{1-1/s},
	\end{multline}
	where we have used \eqref{eq:boundary probability} in the penultimate line and \eqref{eq:m holder} in the ultimate line.
	Given $0 < \gamma < (s-1)/(s+2)$, we take $\e = |t_1-t_2|^{s(1-\g)-1}$ and then set 
	$k = \frac{s(1-\gamma)-1}{2\gamma} > 1$ to obtain \eqref{eq:eta holder}. 
	
	Finally, let $\phi \in \C^{\a}(\bar Q)$ for some $\a >0$, an let $t_0 \in (0,T)$. In view of \eqref{probabilistic-charact-strong}, 
	we have for every $t_1,t_2 \in [t_0,T]$,  
	\begin{multline*}
	\left| \int_0^L \phi(x) (m(t_1,x)-m(t_2,x)) \dd x   \right| \leq 
	\E \left| \phi(X_{t_1}) \1_{t_1 < \tau} -  \phi(X_{t_2}) \1_{t_2 < \tau} \right| \\
	\leq \| \phi \|_{\C^{\a}} \left(  \left| \eta(t_1)-\eta(t_2)  \right| + \E\left| X_{t_1}-X_{t_2}  \right|^\a   \right).
	\end{multline*}
	Hence, by using \eqref{eq:eta holder} and the Burkholder-Davis-Gundy inequality 
	\cite[Thm IV.42.1]{Rogers2000martingales}, we deduce the desired result:
	$$
	\left| \int_0^L \phi(x) (m(t_1,x)-m(t_2,x)) \dd x   \right| \leq C(t_0, \|b\|_{\infty})\| \phi \|_{\C^{\a}} \left| t_1-t_2\right|^{\b},
	$$
	for some $\b>0$.
	
	 \emph{Step 4 (general data):} Now, we suppose that $m_0$ is a probability measure satisfying \eqref{Aa1}, 
	 and not necessarily an element of $L^1(Q)$.
	 Let us  choose a  sequence $(m_{0}^{n})\subset L^1(Q)_+$,
	 which converges weakly (in the sense of measures) to $m_0$, such that 
	 \begin{equation}\label{regularized-sequence}
	 \left\| m_{0}^{n} \right\|_{L^1} \leq \int_0^L \dd m_0 \leq 1,
	  \end{equation}
	 and let $m^n$ to be the weak solution to \eqref{eq:fp}-\eqref{eq:fp:bc} starting from $m_0^n$. 
	 The function $b$ being bounded, we can use \cite[Proposition 3.10]{porretta2015weak} to extract a 
	 subsequence of $(m^n)$, which converges to $m$  in $L^1(Q_T)$. 
	 Owing to \eqref{eq:eta holder}, the sequence  $\eta^n:=\eta(m^n)$ is equicontinuous. 
	 Hence, one can extract further a subsequence to deduce that $\eta$ is  continuous on $(0,T]$. 
	 Combining this conclusion with the fact that $t \to \Pp(t<\tau)$ is right continuous on $[0,T]$
	 and \eqref{probabilistic-charact-3}, we deduce in particular that
	 \begin{equation}\label{probabilistic-charact-e.w}
	 \eta(t)= \Pp(t<\tau), \quad \forall t\in(0,T].
	 \end{equation}
	 Now, since $m_0$ is supported on $(0,L]$ one has $ \eta(m_0)=\eta(0)=\Pp(0<\tau)=1$, 
	 which in turn entails that $\eta$ is continuous on $[0,T]$ thanks to 
	 \eqref{estimate::choice:of:ee-bis}-\eqref{estimate::choice:of:ee} and \eqref{probabilistic-charact-e.w}. 
	 The proof is complete. 
\end{proof}
\begin{remark}\label{probabilistic-measures-every time}
When $m_0$ satisfies \eqref{Aa1} and does not necessarily belong to $L^1(Q)$, the probabilistic characterisation \eqref{probabilistic-charact-strong} still holds for every $t\in [0,T]$. In fact, using the same approximation techniques as in Lemma \ref{continuity of measure}- Step 4, and appealing to \eqref{eq: holder-dual holder} and \eqref{probabilistic-charact-3}, it holds that
\begin{equation*}
	 \int_0^L \phi(x) m(t,x)\dd x = \E\left[ \phi(X_{t})\1_{t<\tau} \right]
	\end{equation*}
	for every $t\in [0,T]$, $\a>0$ and $\phi \in \C^\a (\bar Q)$. 
	Thus, \eqref{probabilistic-charact-strong} ensues by using density arguments.
\end{remark}

\subsection{A priori estimates}\label{section2: A priori estimates}

Now, we collect several a priori estimates for system \eqref{MFG-limit-1}.

\begin{lemma} \label{holder-estimates}
	Suppose that $(u,m)$ satisfies the system \eqref{MFG-limit-1} such that  $m \in L^1(Q_T)_+$,
	and $u$  belongs to $W_s^{1,2}(Q_T)$ for large enough $s>1$.
	Then, we have:
	\begin{enumerate}[ label=(\roman*)]
	\item the maps $u$ and $ u_{x}$ are non-negative; in particular
	\begin{equation}\label{uniform-bound::G-lemma23}
	0 \leq q_{u,m} \leq 1/2;
	\end{equation}
	\item  there exists $\theta>0$ and a constant $c_0>0$ such that
	\begin{equation}\label{Holder estimates-1}
	\|u\|_{C^\theta(\overline{Q_T})}, \|u_x\|_{C^\theta(\overline{Q_T})} \leq c_0
	\end{equation}
	where $c_0$ depends only on $T$ and data.
	In addition, we have
	$$
 	\|u_{xx}\|_{C^\theta({Q'})}, \leq c_1(Q',\theta) \ \ \forall Q' \subset \subset (0,T)\times (0,L];
	$$
	\end{enumerate}

	If in addition $m_0$ belongs to  $L^1(Q)$, then there exists a H\"older exponent $\theta>0$ such that 
	$$
	\|p_{u,m}\|_{C^\theta([t_0,T] \times [0,L])} \leq c_2(t_0, \theta), \quad \forall t_0 \in (0,T),
	$$
	and
	$$
	\|u_{t}\|_{C^\theta({Q'})} \leq c_2(Q',\theta) \ \ \forall Q' \subset \subset (0,T)\times (0,L].
	$$
\end{lemma}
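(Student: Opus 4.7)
I will treat the three assertions in order: signs via maximum principle, global H\"older bounds via parabolic $L^p$ theory, and interior/boundary H\"older bounds via Schauder together with Lemma \ref{continuity of measure}.

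\textbf{Step 1 (Signs and bound on $q_{u,m}$).} For the sign of $u$, note that the HJB equation can be read as a backward linear parabolic equation with nonpositive right-hand side $-q_{u,m}^2 \leq 0$, nonnegative terminal datum $u_T \geq 0$ (by \eqref{A1}), Dirichlet datum $u(t,0)=0$, and Neumann datum $u_x(t,L)=0$. The weak maximum principle gives $u \geq 0$. For $u_x \geq 0$ I differentiate the HJB in $x$, using that
\[
\p_x q_{u,m}(t,x) = -\tfrac{1}{2}\, u_{xx}(t,x)\, \1_{\{q_{u,m}(t,x)>0\}}
\]
(with $\alpha(t):= a(m(t))+c(m(t))\bar p(t)$ independent of $x$), so that $v:=u_x$ solves the linear drift equation
\[
v_t + \tfrac{\sigma^2}{2}\, v_{xx} - q_{u,m}\, v_x\, \1_{\{q_{u,m}>0\}} - r v = 0
\]
with $v(T,\cdot)=u_T' \geq 0$, $v(t,L)=0$, and $v(t,0)=u_x(t,0)\geq 0$ (the last because $u\geq 0$ with $u(t,0)=0$). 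A maximum principle argument (after a standard regularisation of the indicator to handle the irregular coefficient) yields $v \geq 0$. Combined with Lemma \ref{fixed::point::market-price}, which gives $\bar p \leq 1$ and hence $\alpha \leq a+c = 1$, this implies $0\leq q_{u,m}= \tfrac{1}{2}(\alpha-u_x)^+ \leq 1/2$.

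\textbf{Step 2 (Global H\"older bounds on $u$ and $u_x$).} With $q_{u,m}^2 \in L^\infty(Q_T)$ now controlled, the HJB equation is a linear backward parabolic equation with bounded source, smooth terminal datum, and compatible mixed boundary conditions. Classical $W_s^{1,2}$-theory for such problems (Ladyzhenskaya--Solonnikov--Uraltseva, adapted to mixed boundary conditions) yields $\|u\|_{W_s^{1,2}(Q_T)} \leq C(s)$ for every $s\in(1,\infty)$, with $C(s)$ depending only on the data. Choosing $s$ large and invoking Sobolev embedding gives the H\"older bound \eqref{Holder estimates-1} for $u$ and $u_x$ on $\overline{Q_T}$.

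\textbf{Step 3 (Interior and boundary-away H\"older bounds).} The remaining estimates hinge on showing that the source $q_{u,m}^2= \tfrac{1}{4}((\alpha(t)-u_x)^+)^2$ is H\"older continuous. The $x$-regularity is inherited from $u_x \in C^\theta(\overline{Q_T})$ via Step 2. For the $t$-regularity of $\alpha(t)=1-\k\bar q(t)$ I turn to Lemma \ref{continuity of measure}: writing
\[
\bar q(t_1)-\bar q(t_2) = \int_0^L \bigl(q_{u,m}(t_1,x)-q_{u,m}(t_2,x)\bigr) m(t_1,x)\dd x + \int_0^L q_{u,m}(t_2,x) \bigl(m(t_1,x)-m(t_2,x)\bigr) \dd x,
\]
the first term is H\"older in $|t_1-t_2|$ by Step 2 together with $\|m(t_1)\|_{L^1}\leq 1$, and the second is H\"older on $[t_0,T]$ under $m_0 \in L^1(Q)$ by \eqref{eq: holder-dual holder} applied with $\phi = q_{u,m}(t_2,\cdot)$, whose H\"older norm is uniformly controlled. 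Hence $\alpha$ is H\"older on $[t_0,T]$ and the fixed-point identity \eqref{expression::market::price} gives the stated H\"older bound for $p_{u,m}$ on $[t_0,T]\times[0,L]$. Interior Schauder estimates applied to the (now H\"older-source) backward parabolic equation then deliver the interior $C^\theta$ bounds on $u_t$ and $u_{xx}$ on $Q' \subset\subset (0,T)\times(0,L]$. Finally, the interior bound on $u_{xx}$ in (ii), stated without $m_0 \in L^1(Q)$, is obtained by approximating $m_0$ by a sequence $m_0^n \in L^1(Q)$ converging weakly to $m_0$, applying the preceding estimates to the corresponding solutions $(u^n,m^n)$ on any $Q'$ staying away from $t=0$, and passing to the limit using the stability of the system.

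\textbf{Main obstacle.} The delicate point is the H\"older time regularity of $\alpha$, which requires $m_0 \in L^1(Q)$ and even then comes with a constant blowing up as $t_0 \to 0^+$ (this is exactly the degeneracy in \eqref{eq:eta holder}). This is precisely why the final H\"older statements must be localised to sets $Q'$ bounded away from $t=0$, and why the global estimates in (ii) stop at $C^\theta$ regularity for $u,u_x$.
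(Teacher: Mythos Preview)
Your Steps 1 and 2 match the paper's approach and are correct. Step 3 contains two genuine gaps.

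\textbf{First gap: the interior $u_{xx}$ estimate without $m_0\in L^1$.} Your approximation argument is circular in the context of an a priori estimate: to speak of ``the corresponding solutions $(u^n,m^n)$'' you need existence for the approximated data, and to pass to the limit you need stability---both of which are precisely what Lemma~\ref{holder-estimates} is meant to help establish (it feeds the Leray--Schauder argument in the proof of Theorem~\ref{theorem:wellposedness}). The paper avoids this by a direct bootstrap that needs neither $m_0\in L^1$ nor H\"older regularity of $q_{u,m}$: take $\phi\in\C_c^\infty((0,T)\times(0,+\infty))$ and set $w=\phi u_x$. Then $w$ satisfies
\[
w_t+\tfrac{\sigma^2}{2}w_{xx}-rw-q_{u,m}w_x=\phi_t u_x+\sigma^2\phi_x u_{xx}+\tfrac{\sigma^2}{2}\phi_{xx}u_x-q_{u,m}\phi_x u_x,
\]
with \emph{bounded} drift $q_{u,m}$ and right-hand side in every $L^k$ by Step~2. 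Homogeneous boundary data and parabolic $L^p$ theory then give $w\in W_k^{1,2}$ for all $k$, hence $w_x\in C^\theta(\overline{Q_T})$, hence $u_{xx}\in C^\theta(Q')$.

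\textbf{Second gap: the H\"older regularity of $\alpha$.} Your claim that ``the first term is H\"older in $|t_1-t_2|$ by Step~2'' overlooks that $q_{u,m}(t,x)=\tfrac12(\alpha(t)-u_x(t,x))^+$ itself depends on $\alpha$. Carrying this through, your decomposition yields
\[
|\bar q(t_1)-\bar q(t_2)| \le \frac{\k\,\eta(m(t_1))}{2}\,|\bar q(t_1)-\bar q(t_2)| + (\text{H\"older-controlled terms}),
\]
which closes only when $\k\eta<2$; for $\k\ge 2$ the loop does not absorb. The paper handles this by working through the price $p_{u,m}$ and the map $\ell(m,p)$ from Lemma~\ref{fixed::point::market-price}: writing $p(t,x)=f(\ell(m,p)(t),u_x(t,x))$ and decomposing $\ell(m,p)(t_1)-\ell(m,p)(t_2)$ as in \eqref{decomposition:holde:price}, the self-referential piece carries the factor $c(m(t_2))\le\k/(1+\k)<1$, valid for \emph{all} $\k>0$. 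This gives $p_{u,m}$ H\"older on $[t_0,T]\times[0,L]$, hence $\ell(m,p)$ and $q_{u,m}$ H\"older there, and the $u_t$ estimate follows from the HJB equation combined with the $u_{xx}$ bound already obtained above.
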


\begin{proof}
	For large enough $s>1$, we know that $u,u_x \in \C(\overline{Q_T})$ thanks to Sobolev-H\"older embeddings.
	In view of
	$$
	-u_t-\frac{\sigma^2}{2}u_{xx}+ru\geq 0,
	$$
	one easily deduces that $u\geq e^{-rT}\min_{x} u_{T}$, which entails in particular that $u\geq 0$ thanks to \eqref{A1}. 
	Thus, the minimum is attained at $u(t,0)=0$, so that $u_{x}(t,0)\geq 0$ for all $t\in[0,T]$.
	Differentiating the first equation in \eqref{MFG-limit-1} we have that $u_x$ is a generalised solution 
	(cf. \cite[Chapter III]{Parabolic67}) of the following parabolic equation:
	$$
	u_{xt}+\frac{\sigma^2}{2}u_{xxx}-ru_{x}-q_{u,m}u_{xx}=0.
	$$
	By virtue of the maximum principle \cite[Theorem III.7.1]{Parabolic67}  we infer that $u_{x}\geq 0$, 
	since $u_{x}(t,0)$, $u_{x}(t,L)$ and $u'_{T}$ are all non-negative functions. 
	Therefore \eqref{uniform-bound::G-lemma23} follows straightforwardly from \eqref{MFG-quantity-def} thanks to  
	Lemma \ref{fixed::point::market-price}.

	Note that $u$ solves a parabolic equation with bounded coefficients. 
	Since compatibility conditions of order zero are fulfilled thanks to \eqref{A1}, then from \cite[Theorem IV.9.1]{Parabolic67} 
	we have an estimate on $u$ in $W_k^{1,2}(Q_{T})$ for arbitrary $k>1$, namely
	\begin{equation}
	\label{eq:arbitrary k est}
	\|u\|_{W_k^{1,2}(Q_{T})} \leq C\left(\|q_{u,m}\|_{L^k(Q_T)} + \|u_T\|_{W_{k}^{2-\frac{2}{k}}(Q_{T})} \right)
	\leq C\left(\|q_{u,m}\|_{L^\infty(Q_T)} + \|u_T\|_{W_{k}^{2-\frac{2}{k}}(Q_{T})} \right).
	\end{equation}
	This	estimate depends only on $T$, $k$ and data, thanks to \eqref{uniform-bound::G-lemma23}. 
	We deduce \eqref{Holder estimates-1} thanks to Sobolev-H\"older embeddings.
		
	Now, let $\phi \in \C_c^\infty((0,T)\times (0,+\infty))$.
	Observe that $w = \phi u_x$ satisfies
	$$
	w_t + \frac{\sigma^2}{2} w_{xx} - rw - q_{u,m}w_{x} = \phi_t u_x + \sigma^2 \phi_x u_{xx} 
	+ \frac{\sigma^2}{2} \phi_{xx}u_x - q_{u,m}\phi_x u_{x}.
	$$
	For any $k>1$, the right-hand side is bounded in $L^k(Q_T)$ with a constant that depends only on 
	$\phi$, and previous estimates. 
	Since $w$ has homogeneous boundary conditions, we deduce from \cite[Theorem IV.9.1]{Parabolic67} 
	that $\|w_x\|_{C^\t(\overline{Q_T})}$ is bounded by a constant depending only on the norm of $\phi$ 
	and previous estimates.
	The local H\"older estimate on $u_{xx}$ then follows.

	Let $p(t,x) = p_{u,m}(t,x)$.
	Recall that $p(t,x) = f(\ell(m,p)(t),u_x(t,x))$ where $f(x,y) := x - \frac{1}{2}(x-y)^+$ (cf. Lemma \ref{fixed::point::market-price}).
	Since $f$ is 1-Lipschitz in the first variable and $\frac{1}{2}$-Lipschitz in the second, we deduce that
	\begin{equation} 
	\label{eq:p holder}
	|p(t_1,x_1)-p(t_2,x_2)| \leq |\ell(m,p)(t_1)-\ell(m,p)(t_2)| + \frac{1}{2}|u_x(t_1,x_1)-u_x(t_2,x_2)|.
	\end{equation}
	In particular, for each $t$,
	\begin{equation}
	\label{eq:p holder in x}
	|p(t,x_1)-p(t,x_2)| \leq \frac{1}{2}|u_x(t,x_1)-u_x(t,x_2)|
	\end{equation}
	which, by \eqref{Holder estimates-1}, implies that $p(t,\cdot)$ is H\"older
	continuous for every $t$.
	
	Now, we further assume that $m_0\in L^1(Q)_+$ to use \eqref{eq:eta holder}-\eqref{eq: holder-dual holder}. 
	We shall use the following function which is introduced in Lemma \ref{fixed::point::market-price}:
	$$
	\ell(m,p)(t)= a\left( m(t) \right)+c\left( m(t) \right)\overline{p}(t), \quad \mbox{ where }\quad
\overline{p}(t)=\frac{1}{\eta(m(t))}\int_0^{L} p(t,x) m(t,x) \dd x.
	$$
	Fix $t_0 \in (0,T)$ and for $t_1,t_2$ in $[t_0,T]$ write
	\begin{multline}\label{decomposition:holde:price}
	\ell(m,p)(t_1)-\ell(m,p)(t_2)
	= a(m(t_1)) - a(m(t_2)) \\
	+ \k(a(m(t_1)) - a(m(t_2)))\int_0^L p(t_1,.) \dd m(t_1)
	\\
	+\k a(m(t_2))\int_0^L p(t_1,.)\dd ( m(t_1)-  m(t_2))
	\\
	+ \k a(m(t_2))\int_0^L (p(t_1,.)-p(t_2,.))\dd m(t_2),
	\end{multline}
	where we have used the fact that $c(m) = \k a(m)\eta(m)$.
	Observe that $\eta \to \frac{1}{1+\k \eta}$ is $\k$-Lipschitz in the $\eta$ variable, and recall that $p(t_1,\cdot)$ is H\"older
	continuous.
	Moreover, by virtue of \eqref{Holder estimates-1} we know that $q_{u,m}$ satisfies  \eqref{assumption-SDEs}.
	Therefore, using the upper bound on $a(m), c(m)$ and \eqref{eq:eta holder}-\eqref{eq: holder-dual holder}  we infer that
	\begin{equation} \label{eq:ell holder}
	|\ell(m,p)(t_1)-\ell(m,p)(t_2)| \leq C|t_1-t_2|^\b + \frac{\k}{1+\k}\|p(t_1,\cdot)-p(t_2,\cdot)\|_\infty.
	\end{equation}
	Note that the constant in \eqref{eq:ell holder} depend only on $c_0$ and $\k$ 
	thanks to \eqref{uniform-bound::G-lemma23}, \eqref{Holder estimates-1}
	and Lemma \ref{fixed::point::market-price}.
	Using now \eqref{eq:ell holder} in \eqref{eq:p holder}, and choosing $\t$ small enough, we deduce
	\begin{equation}
	\label{eq:p holder in t}
	\frac{1}{1+\k}\|p(t_1,\cdot)-p(t_2,\cdot)\|_\infty \leq C|t_1-t_2|^\b + \frac{1}{2}\|u_x(t_1,\cdot)-u_x(t_2,\cdot)\|_\infty \leq C|t_1-t_2|^\t.
	\end{equation}
	Putting together \eqref{eq:p holder in x} and \eqref{eq:p holder in t} we infer that $p$ has a H\"older estimate, 
	whereupon by \eqref{eq:ell holder} so does $\ell(m,p)$.
	Thus $q_{u,m}$ also has a H\"older estimate, and so does $u_t$ by the HJB equation satisfied by $u$.
\end{proof}

\subsection{Well-posedness} \label{section 2: Well-posedness}
We are now in position to prove the main result of this section.

\begin{proof}[Proof of Theorem \ref{theorem:wellposedness}]
The proof requires several steps, the key arguments being precisely the estimates collected in Lemmas \ref{fixed::point::market-price}-\ref{holder-estimates}.

\emph{Step 1 (data in $L^1$):}
We suppose that $m_{0}$ is an element of $L^1(Q)$ satisfying \eqref{Aa1}. Define $\X$ to be the space of couples $(\varphi,\nu)$, such that  $\varphi$ and $\varphi_x$ are globally continuous on $\overline{Q_T}$, and  $\nu$ belongs to $L^{1}(Q_T)_+$. The functional space  $\X$  endowed with the norm: 
$$
\|(\varphi,\nu) \|_{\X} := \| \varphi\|_{\infty}+\| \varphi_x\|_{\infty} + \| \nu \|_{L^1}
$$
is a Banach space.
Consider the map $\T: (\varphi,\nu, \la) \in \X\times [0,1] \to (w,\mu)$ where $(w,\mu)$ are given by the following parametrized system of coupled partial differential equations:
\begin{equation}
\label{equation1-Th26}
\left\{
\begin{aligned}
&(i) \quad w_{t}+\frac{\sigma^{2}}{2} w_{xx}-r w + \la^2 q_{\varphi,\nu}^2=0
 \quad \mbox{ in }Q_T,\\
&(ii)\quad \mu_{t}-\frac{\sigma^{2}}{2}\mu_{xx}-\left\{ \la q_{\varphi,\nu} \mu \right\}_{x}=0 \quad \mbox{ in }Q_T,\\
&(iii)\quad \mu(t,0)=0,\quad  w(t,0)=0,\quad  w_{x}(t,L)=0\quad \mbox{ in } [0,T],  \\
&(iv)\quad \mu(0)=\la m_{0}, \quad  w(T,x) = \la u_{T}(x)\quad \mbox{ in } [0,L],\\
&(v)\quad \frac{\sigma^{2}}{2}\mu_{x} + \la q_{\varphi,\nu}\mu=0    \quad \mbox{ in } [0,T]\times \{ L\}.
\end{aligned}
\right.
\end{equation} 
By virtue of Lemma \ref{fixed::point::market-price},  the map $q_{\varphi,\nu}$ is well-defined for any $(\varphi,\nu)\in \X$, and satisfies
\begin{equation}\label{bound1-Th26}
\left| q_{\varphi,\nu}  \right| \leq C(1+\| \varphi_x\|_\infty).
\end{equation}
In view of \cite[Theorem IV.9.1]{Parabolic67}, the function $w$ exists and is bounded in $W_s^{1,2}(Q_T)$ for any $s>1$, by a constant which depends on $\| \varphi_x \|_{\infty}$ and data.
(Note that the required compatibility conditions hold owing to \eqref{A1}.
Although \cite[Theorem IV.9.1]{Parabolic67} is stated for Dirichlet boundary conditions, its proof is readily adapted to Neumann or mixed boundary conditions as in the present context; cf.~the discussion in the first paragraph of \cite[Section IV.9]{Parabolic67}).
We deduce that 
$$
\| w \|_{\C^\a}+\| w_x\|_{\C^\a} \leq C(T, L, u_T, \| \varphi_x\|_{\infty})
$$
for some $\a>0$. On the other hand, it is well known (see e.g.~\cite[Chapter III]{Parabolic67})
 that for any $(\varphi,\nu)\in \X$, equation \eqref{equation1-Th26}(ii) has a unique weak solution $\mu$. 
Therefore, $\T$ is well-defined. 	\\
 	Let us now prove that $\T$ is continuous and compact.
 	Suppose $(\varphi_n,\nu_n,\lambda_n)$ is a a bounded sequence in $\X \times [0,1]$ and let $(w_n,\mu_n) = \T(\varphi_n,\nu_n,\lambda_n)$.
 	To prove compactness, we show that, up to a subsequence, $(w_n,\mu_n)$ converges to some $(w,\mu)$ in $\X$.
 	Since $(\varphi_n)_x$ is uniformly bounded, by virtue of \cite[Proposition 3.10]{porretta2015weak}, the sequence $\mu_n$ is relatively compact in $L^1(Q_T)_+$, thanks to \eqref{bound1-Th26} (cf. \eqref{eta-decreasing::theorem24} below  where more details are given).
 	Since $w_n$ and $(w_n)_x$ are uniformly bounded in $\C^{\a}(\overline{Q}_{T})$, by the Ascoli-Arzel\`a Theorem and uniform convergence of the derivative there exists some $w$ such that $w,w_x$ are continuous in $\overline{Q_T}$ and, passing to a subsequence, $w_n \to w$ and $(w_n)_x \to w_x$ uniformly, where in fact $w_n \rightharpoonup w$ weakly in $W_s^{1,2}(Q_T)$ for any $s>1$.
 	This is what we wanted to show.\\ 	
 	To prove continuity, we assume $(\varphi_n,\nu_n,\lambda_n) \to (\varphi,\nu,\lambda)$ in $\X \times [0,1]$.
 	It is enough to show that, after passing to a subsequence, $\T(\varphi_n,\nu_n,\lambda_n) \to \T(\varphi,\nu,\lambda)$.
 	By the preceding argument, we can assume $\T(\varphi_n,\nu_n,\lambda_n) \to (w,\mu)$.
 	We can also use estimates \eqref{eq:F p}-\eqref{eq:F m} to deduce that $q_{\varphi_n,\nu_n} \to q_{\varphi,\nu}$ a.e.~(cf.~the proof of Equation \eqref{eq:Q converges} below), and since $q_{\varphi_n,\nu_n}$ is uniformly bounded we can also assert $q_{\varphi_n,\nu_n} \to q_{\varphi,\nu}$ in $L^s$ for any $s \geq 1$.
 	Then we deduce that $(w,\mu)$ is a solution of \eqref{equation1-Th26} for the given $(\varphi,\nu,\lambda)$.
 	Therefore, $(w,\mu) = \T(\varphi,\nu,\lambda)$, as desired.

Now, let $(u,m)\in \X$ and $\la \in [0,1]$ so that $(u,m)=\T(u,m,\la)$. Then $(u,m)$ satisfies assumptions of Lemma \ref{holder-estimates}  with $m_0, u_T, q_{u,m}$ replaced by $\la m_0, \la u_T$ and $\la q_{u,m}$, respectively. Since the bounds of Lemma \ref{holder-estimates} carry through uniformly in $\la \in [0,1]$ we infer that
$$
\| (u,m)\|_{\X} \leq 1\vee c_0,
$$ 
where $c_0>0$ is the constant of Lemma \ref{holder-estimates}. In addition, for $\la=0$ we have $\T(u,m,0)=(0,0)$.
Therefore, by virtue of Leray-Schauder fixed point Theorem (see e.g. \cite[Theorem 11.6]{Gilbarg}),
we deduce the existence of a solution $(u,m)$ in $\X$ to system \eqref{MFG-limit-1}.

\emph{Step 2 (measure data):} We deal now with general $m_0$, i.e. a probability measure  that is supported on $(0,L]$.  Let $(m_{0}^{n})\subset L^1(Q)_+$ be a  sequence of functions, which converges weakly (in the sense of measures) to $m_0$, and such that 
$$
\left\| m_{0}^{n} \right\|_{L^1} \leq \int_0^L \dd m_0 \leq 1, \
\mbox{ and } \
{\rm supp}(m_0^n) \subset (0,L].
$$
For any $n\geq 1$, define $(u^n, m^n)$ to be a solution in $\X$ to system \eqref{MFG-limit-1} starting from $m_0^n$.

In view of \cite[Proposition 3.10 (iii)]{porretta2015weak} and \eqref{uniform-bound::G-lemma23}, the corresponding solutions $m^n$ to the non-local Fokker-Planck equation lie in a relatively compact set of $L^1(Q_{T})$. Moreover, it holds that
\begin{equation}\label{eta-decreasing::theorem24}
m^{n}\geq 0 \ \mbox{ and } \ \sup_{0\leq t\leq T}\left\| m^{n}(t) \right\|_{L^1}\leq \int_0^L \dd m_0.
\end{equation}
Passing to a subsequence we have $m^{n} \to m$ in $L^1(Q_T)$, $m^{n}(t) \to m(t)$ in $L^1(Q)$ for a.e.~$t$ in $(0,T)$, 
and $m^{n} \to m$ for a.e.~$(t,x)$ in $Q_T$. 
It follows that $m \in L^1(Q_T)_+$  and 
\begin{equation}\label{measure-bound-main-theorem}
\left\| m(t) \right\|_{L^1}\leq 1 \quad \mbox{ for a.e. } t\in (0,T).
\end{equation}
In addition, we know that $q_{u,m}$ fulfils the assumptions of Lemma \ref{continuity of measure}.
Thus $t\to \left\| m(t) \right\|_{L^1}$ is continuous on $(0,T]$, so that \eqref{measure-bound-main-theorem} holds for avery $t\in(0,T]$.
Furthermore, we can appeal to
the probabilistic characterisation  \eqref{probabilistic-charact-strong}, thanks to Remark \ref{probabilistic-measures-every time}, to get
\begin{multline*}
\left| \int_0^L \phi(x) (m(t+h,x)-m(t))\dd x   \right| \leq \E\left| \phi(X_{t+h})\1_{t+h<\tau} -   \phi(X_{t})\1_{t<\tau} \right| \\
\leq \| \phi \|_{\infty} |\eta(t)-\eta(t+h) | + \E\left| \phi(X_{t+h}) -   \phi(X_{t})\right|
\end{multline*}
for every $\phi \in \C(\bar Q)$, and $t\in[0,T]$. Now owing to Lemma \ref{continuity of measure}, $\eta$ is continuous on $[0,T]$. Hence, by taking the limit in the last estimation we infer that
$$
\lim_{h\to 0}\int_0^L \phi(x) (m(t+h,x)-m(t))\dd x=0
$$
thanks to the bounded convergence theorem.
Consequently the map $t\to m(t)$ is continuous on $[0,T]$ with respect to the strong topology of $\M(\bar Q)$.

On the other hand, by Lemma \ref{holder-estimates} we have that $u^{n}$, $u_x^{n}$ are uniformly bounded in $\C^{\t}(\overline{Q}_{T})$, and $u_t^{n}$, $u_{xx}^{n}$  are uniformly bounded in $\C^{\t}(Q')$ for each $Q' \subset \subset (0,T)\times (0,L]$.
Thus, up to a subsequence we obtain that $u,u_x \in \C(\overline{Q_T})$, and
\begin{equation}\label{u-regularity}
u^n \to u \in \C^{1,2}((0,T)\times (0,L])
\end{equation}
where the convergence is in the $\C^{1,2}$ norm on arbitrary compact subsets of $(0,T)\times (0,L]$.

To show that the Hamilton-Jacobi equation holds in a classical sense and the Fokker-Planck equation holds in the sense of distributions, it remains to show that
\begin{equation} \label{eq:Q converges}
q_{u^n,m^n} \to q_{u,m} \ \mathrm{a.e.}
\end{equation}
at least on a subsequence.
Set $p^n = p_{u^n,m^n} = \La(u_x^n,m^n,p^n)$ and $p = p_{u,m} = \La(u_x,m,p)$, with $\La$ defined in Lemma \ref{fixed::point::market-price}.
Using \eqref{eq:F p}-\eqref{eq:F m} we get
\begin{multline}
\|p^n(t,\cdot)- p(t,\cdot)\|_\infty \leq \|\La(u_x^n,m^n,p^n)(t,\cdot)-\La(u_x,m^n,p^n)(t,\cdot)\|_\infty \\
+ \|\La(u_x,m^n,p^n)(t,\cdot) - \La(u_x,m^n,p)(t,\cdot)\|_\infty
+ \|\La(u_x,m^n,p)(t,\cdot) - \La(u_x,m,p)(t,\cdot)\|_\infty\\
\leq \frac{1}{2}\|u_x^n - u_x\|_\infty + \frac{\k}{1+\k}\|p^n(t,\cdot)- p(t,\cdot)\|_\infty + |\ell(m^n,p)(t) - \ell(m,p)(t)|
\end{multline}
which means
\begin{equation}
\|p^n(t,\cdot)- p(t,\cdot)\|_\infty \leq \frac{1+\k}{2}\|u_x^n - u_x\|_\infty  + (1+\k)|\ell(m^n,p)(t) - \ell(m,p)(t)|.
\end{equation}
Noting that (up to a subsequence) $m^n(t) \to m(t)$ in $L^1(Q)$ a.e., we use the fact that $a(m),c(m),\eta(m)$ are all continuous with respect to this metric to deduce that
\begin{equation} \label{eq:ell m to 0}
|\ell(m^n,p)(t) - \ell(m,p)(t)| \to 0 \ \ \mathrm{a.e.} \ t \in (0,T)
\end{equation}
from which we conclude that 
\begin{equation} \label{eq:p to 0}
\|p^n(t,\cdot)- p(t,\cdot)\|_\infty \to 0 \ \ \mathrm{a.e.} \ t \in (0,T).
\end{equation}
Now from \eqref{eq:p to 0} and \eqref{eq:ell p} we have
\begin{equation} \label{eq:ell p to 0}
|\ell(m,p^n)(t) - \ell(m,p)(t)| \to 0 \ \ \mathrm{a.e.} \ t \in (0,T).
\end{equation}
Combining \eqref{eq:ell m to 0} and \eqref{eq:ell p to 0} we see that $\ell(m^n,p^n) \to \ell(m,p)$ a.e.
We deduce \eqref{eq:Q converges} from the definition \eqref{MFG-quantity-def}. 
Therefore $(u^{n}, m^n)$ converges to some $(u,m)$ which is a solution to \eqref{MFG-limit-1} with initial data $m_0$. 

\emph{Step 3 (uniqueness):} 
Let $(u_i,m_i), i=1,2$ be two solutions of \eqref{MFG-limit-1}.
We set
$$
G_i:=q_{u_{i},m_i} \ \mbox{ and } \ \bar G_i := \int_0^L q_{u_{i},m_i}(t,y)\dd m_i(t).
$$
From \eqref{MFG-quantity-def-initial}, we know that
\begin{equation}\label{relation-G-bar G}
G_i = \frac{1}{2}\left(1 - \k \bar G_i - u_{i,x}\right)^+.
\end{equation}

Let $u = u_1 - u_2, m = m_1 - m_2, G = G_1 - G_2, \bar G = \bar G_1 - \bar G_2$.
Using $(t,x)\to e^{-rt}u(t,x)$ as a test function in the equations satisfied by $m_1, m_2$, with some algebra yields
\begin{multline} \label{eq:uniqueness1}
0
= \int_0^T e^{-rt} \int_0^L (G^2_2 - G^2_1 - G_1 u_{x})m_1 + (G^2_1 - G^2_2 + G_2 u_{x})m_2 \dd x \dd t
\\
= \int_0^T e^{-rt} \int_0^L (G_1-G_2)^2(m_1 + m_2)\dd x \dd t
+ \int_0^T e^{-rt} \int_0^L (2G+u_x)(G_2m_2 - G_1m_1) \dd x \dd t.
\end{multline}
Now since $G_2 = 0$ on the set where $1 - \k \bar G_2(t) - u_{2,x} < 0$, we can write
\begin{align*}
(2G+u_x)G_2 
&= \left(\left(1-\k \bar G_1 - u_{1,x}\right)^+ - \left(1 - \k \bar G_2(t) - u_{2,x}\right) + u_{1,x} - u_{2,x}\right)G_2
\\
&= \left(-\k \bar G + \left(1-\k \bar G_1 - u_{1,x}\right)^-\right)G_2.
\end{align*}
Similarly we can write
\begin{align*}
(2G+u_x)G_1 
&= \left(\left(1-\k \bar G_1 - u_{1,x}\right) - \left(1 - \k \bar G_2(t) - u_{2,x}\right)^+ + u_{1,x} - u_{2,x}\right)G_1
\\
&= \left(-\k \bar G - \left(1-\k \bar G_2 - u_{2,x}\right)^-\right)G_1.
\end{align*}
Thus we compute
\begin{multline*}
\int_0^L (2G+u_x)(G_2m_2 - G_1m_1) \dd x \dd t
= \k \bar G^2 + \int_0^L \left(1-\k \bar G_1 - u_{1,x}\right)^- G_2m_2 \dd x \dd t
\\
+  \int_0^L \left(1-\k \bar G_2 - u_{2,x}\right)^- G_1m_1 \dd x \dd t
\geq \k \bar G^2.
\end{multline*}
So from \eqref{eq:uniqueness1} we conclude
\begin{equation}
\label{eq:uniqueness2}
\int_0^T e^{-rt} \int_0^L (G_1-G_2)^2(m_1 + m_2)\dd x \dd t
+ \k \int_0^T e^{-rt} (\bar G_1 - \bar G_2)^2 \dd t = 0.
\end{equation}
In particular, $\bar G_1 \equiv \bar G_2$.
We can then appeal to uniqueness for the Hamilton-Jacobi equation to get $u_1 \equiv u_2$ (cf. \cite[Chapter V]{Parabolic67}).
By \eqref{relation-G-bar G}, this entails that $G_1 \equiv G_2$, and so $m_1 \equiv m_2$ by uniqueness for the Fokker-Planck equation.
\end{proof}




\section{Application of the MFG approach} \label{section::application}

This section is devoted to the proof of Theorem \ref{Theorem-epsilon::Nash-Cournot}.
Namely, we show that the optimal feedback strategy, computed from the MFG system \eqref{MFG-limit-1}, provides an $\ee$-Nash equilibria for the $N $-Player Cournot game, where the error $\ee$ is arbitrarily small as $N\to \infty$. 
Throughout this section $(u,m)$ is the solution to \eqref{MFG-limit-1} starting from some probability measure  $m_0$ satisfying \eqref{Aa1}, and the function $q_{u,m}$ is given by \eqref{MFG-quantity-def-initial}.
Moreover, we define
\begin{equation}
\left\{
\begin{aligned}
&\dd \hat X_{t}^{i} =  - q_{u,m}(t, \hat X_{t}^{i})\dd t + \sigma  \dd W_{t}^{i}-\dd \xi_t^{\hat X^{i}}\\
&X_{0}^{i} = V_{i}, 
\\\end{aligned}
\right.
\end{equation}
and set $\hat q_t^i:= q_{u,m}(t, \hat X_{t}^{i})$.
We recall that the objective functional is defined as
$$
\J_c^{i,N}(q^1,...,q^N):=\E\left\{ \int_{0}^{T} e^{-rs}  \left( 1-\k \bar q_s^i -q_s^i \right) q_s^i \mathds{1}_{s < \tau^{i}} \dd s + e^{-rT}u_{T}( X_{\tau^{i}}^{ i} )  \right\}.
$$
Our goal is to prove that 
$$\J_c^{i,N}\left(q^i; (\hat q^j)_{j\neq i} \right) \leq \ee+\J_c^{i,N}\left(\hat q^1,...,\hat q^N \right) \quad \forall q^i \in \A_c, \ \forall i =1,\ldots,N$$
as long as $N$ is large enough.

Let us set
$$
\hat \tau^{i}:= \inf\left\{ t\geq 0 \ : \ \hat X_t^{i} \leq 0 \right\} \wedge T,
$$
and define the following process:
\begin{equation}\label{empirical process:definition}
\hat \nu_t^N:= \frac{1}{N} \sum_{k=1}^N  \d_{\hat X_{t}^{k}} \1_{t< \hat \tau^{k }}, \quad \forall t \in [0,T],
\end{equation}
where $\d_x$ denotes the Dirac delta measure of the point $x\in \R$. Observe that the above definition makes sense because the stochastic dynamics $( \hat X^{1},..., \hat X^{N})$ exists in the strong sense owing to Lemma \ref{holder-estimates}. In particular, the strategy profile $\left(\hat q^{1},...,\hat q^{N}\right)$ defined in Theorem \ref{Theorem-epsilon::Nash-Cournot} belongs to $\prod_{i=1}^N \mathbb A_c$.
Moreover, by using the probabilistic characterization \eqref{probabilistic-charact-3}, note that for any measurable and bounded function $\phi$ on $\bar Q$ we have 
\begin{equation}\label{propagation of chaos: mean}
\E \left[ \int_0^L \phi \dd \hat \nu_t^N \right] = \int_0^L \phi \dd m(t), \quad  \mbox{ for a.e. } t \in (0,T).
\end{equation}
The above identity is not strong enough to show Theorem \ref{Theorem-epsilon::Nash-Cournot} and we need a stronger condition (cf.~\eqref{requirement-convergence-THm}). Therefore, we need to work harder in order to get  more information on the asymptotic behavior of the empirical process \eqref{empirical process:definition} when $N\to \infty$.

We aim to prove that the \emph{empirical process} $\left(\hat \nu^N \right)_{N\geq 1}$ converges in law to the \emph{deterministic} measure $m$ in a suitable function space,
by using arguments borrowed from \cite{ledger2016-1, ledger2016-2}. For this, we start by showing the existence of sub-sequences $(\hat \nu^{N'} )$ that converges in law to some limiting process $\nu^\ast$. Then, we show that $\nu^\ast$ belongs to $\tilde \P(\bar Q)$ and satisfies the same equation as $m$. Finally, we invoke the uniqueness of weak solutions to the Fokker-Planck equation to deduce full weak convergence toward $m$. 

The crucial step consists in showing that the sequence of the laws of $\left(\hat \nu^N \right)_{N\geq 1}$ is relatively compact on a suitable topological space. This is where the machinery of \cite{ledger2016-2} is convenient. In order to use the analytical tools of that paper, we view the empirical process  as a random variable on the space of \emph{c\`adl\`ag} (right continuous and has left-hand limits) functions, mapping $[0,T]$ into the space of tempered distributions. This function space is denoted $D_{\S'_\R}$ and is endowed with the so called Skorokhod's $\rm M 1$ topology. Note that there are no measurability issues owing to \cite[Proposition 2.7]{ledger2016-2}. Moreover, by virtue of \cite{mitoma1983}, the process $\left(\hat \nu_t^N \right)_{t\in[0,T]}$ has a version that is \emph{c\`adl\`ag} in the strong topology of $\S'_\R$ for every $N\geq 1$,  since $\hat \nu_t^N(\phi) := \int_\R \phi \dd \hat \nu_t^N$ is a real-valued \emph{c\`adl\`ag} process, for every $\phi \in \S_\R$ and $N\geq 1$.
We refer the reader to \cite{ledger2016-2} for the construction of $(D_{\S'_\R}, \rm M1)$, and to \cite{Whitt2002} for general background on Skorokhod's topologies. We shall denote by $(D_\R, \rm M 1)$ the space of $\R$-valued \emph{c\`adl\`ag} functions mapping $[0,T]$ to $\R$, endowed with Skorokhod's $\rm M 1$ topology. 

 The main strengths of working with the $\rm M 1$ topology in our context, are based on the following facts:
\begin{itemize}
\item tightness on $(D_{\S'_\R}, \rm M 1)$ implies the relative compactness on $(D_{\S'_\R}, \rm M 1)$ thanks to \cite[Theorem 3.2]{ledger2016-2});
\item the proof of tightness on $(D_{\S'_\R}, \rm M 1)$ is reduced through the canonical projection to the study of tightness in $(D_\R, \rm M 1)$, for which we have suitable characterizations \cite{Whitt2002, ledger2016-2};
\item bounded monotone real-valued processes are automatically tight on $(D_\R, \rm M 1)$; this is an important feature, that enables  to prove tightness of the  sequence of empirical process laws, by using a suitable decomposition.
\end{itemize}
It is also important to note that this approach could be generalized to deal with the case of a systemic noise, by using a martingale approach as in \cite[Lemma 5.9]{ledger2016-1}. We do not deal with that case in this paper.

More generally, one can replace $\S'_\R$ by any dual space of a \emph{countably Hilbertian nuclear space} (cf. \cite{ledger2016-2} and references therein). Although the class $\S'_\R$ seems to be excessively large for our purposes, we recover measure-valued processes by means of Riesz representation theorem (cf. \cite[Proposition 5.3]{ledger2016-1} for an example in the same context). 

Throughout this part, \emph{we shall use the symbol $\Rightarrow$ to denote convergence in law}. The key technical lemma of this section is the following:

\begin{lemma}\label{propagation-of-chaos-lemma}
As $N \to \infty$, we have $\hat \nu^N \Rightarrow m$ on $(D_{\S'_\R}, \rm M1)$, i.e. for every continuous bounded real-valued function $\Psi$ on $(D_{\S'_\R}, \rm M1)$, it holds that 
$$
\lim_N \E\left[\Psi\left(\hat \nu^N \right) \right] = \Psi(m).
$$
\end{lemma}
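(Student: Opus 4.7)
The plan follows the compactness scheme of \cite{ledger2016-1, ledger2016-2} sketched in the text preceding the lemma: (i) establish tightness of the laws of $(\hat\nu^N)_N$ in $(D_{\S'_\R}, \mathrm{M1})$; (ii) identify every subsequential limit as a weak solution of the measure-valued Fokker-Planck equation \eqref{eq:fp}-\eqref{eq:fp:bc} with drift $b = q_{u,m}$ and initial datum $m_0$; (iii) invoke uniqueness (Proposition \ref{uniqueness:measure-valued}) to conclude that the only possible limit is the deterministic object $m$. Since every subsequential limit equals the same deterministic element, the full sequence converges in law.

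For tightness, I would use \cite[Theorem 3.2]{ledger2016-2} to reduce matters to tightness of the real-valued projections $(\hat\nu^N(\phi))_N$ in $(D_\R, \mathrm{M1})$ for each fixed $\phi \in \S_\R$. Applying It\^o's formula to $\phi(\hat X^i_t)\1_{t<\hat\tau^i}$, taking into account the reflection at $L$ and the downward jump of size $-\phi(0)$ occurring at the absorption time $\hat\tau^i$, yields the decomposition
\begin{equation*}
\hat\nu^N_t(\phi) = \hat\nu^N_0(\phi) + A^N_t(\phi) + M^N_t(\phi) - \phi'(L)\, L^N_t - \phi(0)\, P^N_t,
\end{equation*}
where $A^N(\phi)$ is absolutely continuous with a deterministic Lipschitz constant (via $0 \leq q_{u,m} \leq 1/2$ from Lemma \ref{holder-estimates}); $M^N(\phi)$ is a martingale with $\E[\sup_{t\leq T}|M^N_t(\phi)|^2] = O(1/N)$ by Doob together with independence of the $W^i$; and $L^N$, $P^N$ are bounded nondecreasing scalar processes (respectively the normalised cumulative local time at $L$ and the normalised proportion of absorbed particles). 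The first two pieces are tight in the uniform topology, hence in $\mathrm{M1}$; the last two are bounded monotone and so automatically tight in $(D_\R, \mathrm{M1})$. Tightness of the sum follows.

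For identification, Prokhorov combined with Skorokhod representation yields a subsequence along which $\hat\nu^{N'}$ converges almost surely to some $\nu^*$ in $(D_{\S'_\R}, \mathrm{M1})$; by mass conservation and a Riesz representation argument as in \cite[Proposition 5.3]{ledger2016-1}, $\nu^*_t$ takes values in $\tilde\P(\bar Q)$. Passing to the limit in the It\^o decomposition against any $\phi \in \C_c^\infty([0,T)\times \bar Q)$ satisfying $\phi(t,0) = \phi_x(t,L) = 0$ kills the martingale (by $L^2$) and the boundary contributions $\phi(0)P^{N'}$, $\phi_x(L)L^{N'}$ identically; a routine continuity argument, using the H\"older regularity of $q_{u,m}$ from Lemma \ref{holder-estimates} to handle the drift term, then shows that $\nu^*$ almost surely satisfies the weak formulation \eqref{eq:weak fp}-\eqref{eq:weak fp:bc} with drift $q_{u,m}$ and initial datum $m_0$. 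Proposition \ref{uniqueness:measure-valued} forces $\nu^* = m$, concluding the argument.

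The main obstacle is the identification step, and specifically the interplay between the absorbing boundary at $x=0$ and the discontinuous behaviour of $\hat\nu^N(\phi)$: each absorption produces a negative jump, and along a converging subsequence such jumps may cluster arbitrarily in time. In the J1 topology this would be fatal, but the $\mathrm{M1}$ framework of \cite{ledger2016-2}---in which bounded monotone jump processes are automatically tight and limits are interpreted through completed graphs across jump instants---handles this transparently. A secondary technical nuisance is the lack of global regularity of $q_{u,m}$ up to $t=0$; this I would handle by localising the convergence to $[\delta,T]$ (where the H\"older estimates of Lemma \ref{holder-estimates} are available), sending $\delta \to 0^+$, and exploiting the assumption ${\rm supp}(m_0) \subset (0,L]$ via Lemma \ref{continuity of measure} to control the initial layer.
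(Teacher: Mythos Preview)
Your overall strategy---tightness in $(D_{\S'_\R}, \mathrm{M1})$, identification of subsequential limits via the weak Fokker--Planck formulation, uniqueness through Proposition \ref{uniqueness:measure-valued}---matches the paper's exactly, and the identification step is essentially the same (the paper extends $q_{u,m}$ to a compactly supported continuous function on $\R$ and mollifies in $x$ so the drift pairs with Schwartz test functions; your ``H\"older regularity'' remark would have to be cashed out this way). Your worry about regularity of $q_{u,m}$ near $t=0$ is misplaced: only global boundedness and uniform continuity in $x$ are used, and these hold on all of $\overline{Q_T}$ by Lemma \ref{holder-estimates}.

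There are, however, genuine gaps in the tightness sketch. First, the normalised local time $L^N$ is nondecreasing but \emph{not} bounded, so ``bounded monotone $\Rightarrow$ tight'' does not apply; since local time is continuous, the fix is to group it with $A^N, M^N$ and bound increments via moments---this is exactly what the paper does by working with $\bar\nu^N_t(\phi):=N^{-1}\sum_k\phi(\hat X^k_{t\wedge\hat\tau^k})$, which absorbs the reflection. Second, and more seriously, ``tightness of the sum follows'' is false as stated: addition is not continuous in $\mathrm{M1}$, so tightness of summands does not transfer to the sum. The paper instead verifies the two-part criterion of \cite[Proposition 4.1]{ledger2016-2}. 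For the interior $H_\R$-oscillation bound, subtracting the monotone piece $\phi(0)\mathcal E^N$ annihilates $H_\R$ and reduces matters to fourth-moment increment bounds on $\bar\nu^N(\phi)$. But the endpoint control at $t\in\{0,T\}$ is a separate requirement, handled by Lemmas \ref{mass-concentration-origin}--\ref{mass-loss-increment}: one must rule out, uniformly in $N$, a macroscopic fraction of particles being absorbed in an arbitrarily short window. You do not address this, and it is precisely where the support hypothesis \eqref{Aa1} and the Girsanov/heat-kernel estimate \eqref{eq:change of probability estimate-bis} enter the argument.
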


The bulk of this section is devoted to the proof of Lemma \ref{propagation-of-chaos-lemma}.
The proof of Theorem \ref{Theorem-epsilon::Nash-Cournot} is completed in Section \ref{sec:proof of epsilon eq}.

\subsection{Proof of Lemma \ref{propagation-of-chaos-lemma}}
\subsubsection{Tightness} At first, we aim to prove the tightness of $(\hat\nu^N)_{N\geq 1}$ on the space $(D_{\S'_\R}, \rm M1)$;
that is, for every $\phi \in \S_\R$ and for all $\ee>0$, there exists a compact subset $K$ of $(D_\R, \rm M 1)$ such that:
$$
\Pp\left(\hat\nu^N(\phi) \in K \right) > 1-\ee \quad \mbox{ for all } \ N\geq 1.
$$
For that purpose, we shall use a convenient characterization of tightness in $(D_\R, \rm M 1)$ (cf. \cite[Theorem 12.12.3]{Whitt2002}).

We start by controlling the concentration of mass at the origin:

\begin{lemma} \label{mass-concentration-origin}
For every $t\in[0,T]$, we have
$$
\sup_{N\geq 1}\E\hat \nu_t^N(0,\ee) \to 0, \quad \mbox{ as } \ee \to 0.
$$
\end{lemma}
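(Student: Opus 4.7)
The key structural observation is that under the MFG feedback $q_{u,m}$, which is a \emph{deterministic} function of $(t,x)$ and globally Lipschitz in $x$ by Lemma \ref{holder-estimates}, the particles $(\hat X^i)_{i=1}^N$ form a genuinely i.i.d.~system: the $V_i$ are i.i.d.~by assumption, the $W^i$ are mutually independent, and each $\hat X^i$ is produced from $(V_i,W^i)$ through the \emph{same} deterministic Skorokhod map (cf.~\eqref{skorokhod-problem-equation-formula}). Consequently,
$$
\E \hat\nu_t^N(0,\ee)
= \frac{1}{N}\sum_{i=1}^N \Pp\bigl(\hat X_t^i \in (0,\ee),\ t<\hat\tau^i\bigr)
= \Pp\bigl(\hat X_t^1 \in (0,\ee),\ t<\hat\tau^1\bigr),
$$
which is manifestly independent of $N$. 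Thus the supremum in $N$ is trivial and the problem reduces to showing that this single probability vanishes as $\ee\to 0^+$.

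For $t>0$, I would argue by continuity of $\Pp$ along decreasing sequences: the events $A_\ee := \{\hat X_t^1 \in (0,\ee)\}\cap\{t<\hat\tau^1\}$ are monotonically decreasing as $\ee$ shrinks, and $\bigcap_{n\geq 1} A_{1/n} = \emptyset$ because no outcome can satisfy $\hat X_t^1>0$ and $\hat X_t^1<1/n$ simultaneously for every $n$. Hence $\Pp(A_\ee)\to 0$. For $t=0$, $\hat X_0^1 = V_1\sim m_0$, and the same continuity-from-above argument combined with $\mathrm{supp}(m_0)\subset (0,L]$ (assumption \eqref{Aa1}) gives $\Pp(V_1\in(0,\ee))\to m_0(\{0\})=0$.

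There is no substantive obstacle here: the ``hard'' content (regularity of $m$, absence of mass at the boundary, exchangeability of the particles) is either already encoded in Lemma \ref{continuity of measure} and Remark \ref{probabilistic-measures-every time}, or trivially bypassed by the fact that the particle system is exactly i.i.d.~under the feedback control rather than weakly interacting. Alternatively, one could phrase the whole argument directly in terms of $m(t)\bigl((0,\ee)\bigr)$ via the probabilistic characterization \eqref{probabilistic-charact-strong}, but the i.i.d.~route is the shortest.
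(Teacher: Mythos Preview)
Your proof is correct, and it is genuinely simpler than the paper's. Both arguments begin by exploiting the i.i.d.\ structure of the particles (the paper does this implicitly when writing $\sup_N \E\hat\nu_0^N(0,\ee)=\int_0^\ee \dd m_0$ and in \eqref{convenient-bound-lemma34}), so that $\E\hat\nu_t^N(0,\ee)$ is in fact independent of $N$. The difference lies in how the resulting single-particle probability is shown to vanish for $t>0$: you invoke continuity from above of $\Pp$, which is immediate and entirely sufficient for the lemma as stated; the paper instead bounds $\E[(1-\phi_\ee(\hat X_t^1))\1_{t<\hat\tau^1}]$ via the Girsanov/heat-kernel estimate \eqref{eq:change of probability estimate-bis} from Section~\ref{section2}, obtaining the quantitative rate $C(t)\,\ee^{1/4}$. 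Your route is shorter; the paper's route buys an explicit decay rate in $\ee$ with an explicit dependence on $t$. That extra precision is not needed for \emph{this} lemma, but the paper does re-use the quantitative bound directly (bypassing the lemma) in the left-limit part of Lemma~\ref{mass-loss-increment}, where one must control $\E\hat\nu_{t-h}^N(0,\ee)$ uniformly as both $h$ and $\ee$ vary---so the heavier machinery is not wasted, just front-loaded here.
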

\begin{proof}
Let us fix $\ee>0$. Note that, for every $t\in [0,T]$
$$
\E\hat \nu_t^N(0,\ee) = \frac{1}{N}\sum_{i=1}^N \Pp\left( \hat X_t^{i}\in (0,\ee) ;  t< \hat \tau^i \right).
$$
Thus, on the one hand
$$
\sup_{N\geq 1}\E\hat \nu_0^N(0,\ee) = \int_0^\ee \dd m_0  \to 0, \quad \mbox{ as } \ee \to 0.
$$
On the other hand, we have for any $t\in(0,T]$
\begin{equation}\label{convenient-bound-lemma34}
\sup_{N\geq 1}\E\hat \nu_t^N(0,\ee) \leq \sup_{N\geq 1}N^{-1}\sum_{i=1}^N \E\left[ (1-\phi_\ee(\hat X_t^{i})) \1_{ t< \hat \tau^i} \right]
\end{equation}
where $\phi_\ee$ is the cut-off function defined in \eqref{cutt-off-function:specifications}. Thus, by virtue of \eqref{eq:change of probability estimate-bis} we obtain
$$
\sup_{N\geq 1}\E\hat \nu_t^N(0,\ee) \leq C(L,t,\|q_{u,m}\|_\infty) \ee^{1/4},
$$
which entails the desired result.
\end{proof}

The second ingredient is the control of the mass loss increment:
\begin{lemma} \label{mass-loss-increment}
For every $t\in[0,T]$ and $\la >0$
$$
\lim_{h \to 0} \lim\sup_{N} \Pp\left( \left| \eta\left(\hat \nu_t^{N}\right)-\eta\left(\hat \nu_{t+h}^{N}\right) \right| \geq \la\right) =0,
$$
where the map $\mu\to \eta(\mu)$ is defined in \eqref{coefficients-defintion:a,c,eta}.
\end{lemma}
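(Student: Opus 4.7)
The plan is to reduce this probabilistic estimate to the deterministic continuity of $t \mapsto \eta(m(t))$ already established in Lemma \ref{continuity of measure}. The key observation is that the production rate $q_{u,m}$ is non-negative, so each reflected process $\hat X^{i}$ has a nondecreasing local time term and a drift that pushes it toward $0$; once an agent is killed it never comes back. Consequently the map
\[
t \mapsto \eta(\hat \nu_t^N) = \frac{1}{N}\sum_{k=1}^N \1_{t<\hat\tau^k}
\]
is almost surely nonincreasing, and in particular $\left|\eta(\hat\nu_t^N)-\eta(\hat\nu_{t+h}^N)\right|$ equals either $\eta(\hat\nu_t^N)-\eta(\hat\nu_{t+h}^N)$ or $\eta(\hat\nu_{t+h}^N)-\eta(\hat\nu_t^N)$ depending on the sign of $h$, and is in any case nonnegative.

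With this monotonicity in hand, I would apply Markov's inequality:
\[
\Pp\left(\left|\eta(\hat\nu_t^N)-\eta(\hat\nu_{t+h}^N)\right|\geq\lambda\right)
\;\leq\;\lambda^{-1}\,\E\!\left|\eta(\hat\nu_t^N)-\eta(\hat\nu_{t+h}^N)\right|.
\]
Since the processes $\hat X^{1},\ldots,\hat X^{N}$ are i.i.d.\ copies of the single diffusion appearing in Remark \ref{probabilistic-measures-every time} with $b=q_{u,m}$, each summand has the same expectation, namely
\[
\E\!\left[\1_{t<\hat\tau^1}-\1_{t+h<\hat\tau^1}\right] \;=\; \Pp(t<\hat\tau^1)-\Pp(t+h<\hat\tau^1) \;=\; \eta(m(t))-\eta(m(t+h))
\]
(using the identity $\eta(m(s))=\Pp(s<\hat\tau^1)$ established in Step 4 of the proof of Lemma \ref{continuity of measure}, cf.\ equation \eqref{probabilistic-charact-e.w}). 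Summing over $i$ and dividing by $N$ yields the $N$-independent bound
\[
\Pp\left(\left|\eta(\hat\nu_t^N)-\eta(\hat\nu_{t+h}^N)\right|\geq\lambda\right) \;\leq\; \lambda^{-1}\left|\eta(m(t))-\eta(m(t+h))\right|.
\]

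The right-hand side does not depend on $N$, so $\limsup_N$ simply preserves it. To close the argument, I would invoke Lemma \ref{continuity of measure}, which tells us that $t\mapsto\eta(m(t))$ is continuous on $[0,T]$ (this is exactly the conclusion of the first part of that lemma, valid for measure initial data satisfying \eqref{Aa1} because $q_{u,m}$ is bounded Lipschitz by Lemma \ref{holder-estimates}). Hence $|\eta(m(t))-\eta(m(t+h))|\to 0$ as $h\to 0$, which gives the claim. I do not foresee a serious obstacle here: the only subtlety is to make sure one is entitled to use the probabilistic representation $\eta(m(t))=\Pp(t<\hat\tau^1)$ at \emph{every} $t\in[0,T]$ rather than almost every, but this is precisely what Remark \ref{probabilistic-measures-every time} provides.
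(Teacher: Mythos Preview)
Your argument is correct and is considerably more direct than the paper's. Both proofs exploit the monotonicity of $t\mapsto\eta(\hat\nu_t^N)$, but the paper never takes expectation of this quantity; instead it performs a pathwise decomposition inspired by \cite{ledger2016-1}: it conditions on the random set $\mathcal{I}_t=\{i:\hat X_t^i\geq\e\}$ of particles at distance at least $\e$ from the boundary, bounds the probability that more than $\lambda N/2$ of them are absorbed within time $h$ via a uniform Brownian-oscillation estimate, and handles the complementary event with Lemma~\ref{mass-concentration-origin}. Your route sidesteps all of this by observing that the particles $(\hat X^i,\hat\tau^i)$ are i.i.d., so Markov's inequality on the nonnegative variable $|\eta(\hat\nu_t^N)-\eta(\hat\nu_{t+h}^N)|$ gives the \emph{deterministic}, $N$-free bound $\lambda^{-1}|\eta(m(t))-\eta(m(t+h))|$, and continuity of $\eta(m(\cdot))$ from Lemma~\ref{continuity of measure} closes the argument immediately. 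What the paper's approach buys is robustness: it is written to survive a common-noise setting (as in \cite{ledger2016-1}) where the particles are only exchangeable, not independent, so one cannot reduce to a single-particle expectation; in the purely idiosyncratic-noise setting treated here, your shortcut is both valid and preferable.
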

\begin{proof}
The proof is inspired by \cite[Proposition 4.7]{ledger2016-1}.  Let $\ee,h>0$ and $t\in[0,T]$, we have
\begin{multline}\label{first-estimation-lemma45}
\Pp\left(\eta\left(\hat \nu_t^{N}\right)-\eta\left(\hat \nu_{t+h}^{N}\right)\geq \la \right) \\
\leq \Pp\left(\hat \nu_t^N(0,\ee) \geq \la/2\right) + \Pp\left(\eta\left(\hat \nu_t^{N}\right)-\eta\left(\hat \nu_{t+h}^{N}\right) \geq \la ; \hat \nu_t^N(0,\ee) < \la/2\right).
\end{multline}
The reason why we use the latter decomposition will be clear in \eqref{red-estimation-lemma45}.
Owing to Markov's inequality and Lemma \ref{mass-concentration-origin}, one has
$$
\lim\sup_{N}\Pp(\hat \nu_t^N(0,\ee) \geq \la/2) \leq 2 \la^{-1} \sup_{N}\E\hat \nu_t^N(0,\ee) \to 0, \quad \mbox{ as } \ee \to 0.
$$
Now we deal with the second part in estimate \eqref{first-estimation-lemma45}. Define $\mathcal I_t$ to be the following random set of indices:
$$
\mathcal I_t:=\left\{ 1\leq i \leq N \ : \ \hat X_t^{i } \geq \ee  \right\};
$$
then, we have
\begin{multline*}
\Pp\left(\eta\left(\hat \nu_t^{N}\right)-\eta\left(\hat \nu_{t+h}^{N}\right) \geq \la ; \hat \nu_t^N(0,\ee) < \la/2\right) \\
 \leq \sum_{\# \mathcal I \geq N(1-\la/2)} \Pp\left(\eta\left(\hat \nu_t^{N}\right)-\eta\left(\hat \nu_{t+h}^{N}\right) \geq \la \ | \ \mathcal I_t= \mathcal I) \Pp (\mathcal I_t = \mathcal I\right),
\end{multline*}
where $\# \mathcal I$ denotes the number of elements of $\mathcal I \subseteq \{1,2,\ldots,N\}$. Thus, we reduce the problem to the estimation of the dynamics increments; using the same steps as for \eqref{estimate::choice:of:ee} we have 
\begin{multline}\label{red-estimation-lemma45}
\Pp\left(\eta\left(\hat \nu_t^{N}\right)-\eta\left(\hat \nu_{t+h}^{N}\right) \geq \la \ | \ \mathcal I_t= \mathcal I\right) \\
 \leq \Pp\left( \# \left\{i\in \mathcal I \ : \ \inf_{s\in[t,t+h]} \hat X_s^{i}- \hat X_t^{i} \leq - \ee   \right\} \geq \la N/2 \ \Big| \  \mathcal I_t= \mathcal I  \right) \\
 \leq \Pp\left( \# \left\{i\in \mathcal I \ : \ \sup_{s\in[0,h]} B_s^i - \inf_{s\in[0,h]} B_s^i \geq \frac{\ee-h}{\sigma}   \right\} \geq \la N/2  \right),
\end{multline}
where we have used the uniform bound on $q_{u,m}$ of Lemma \ref{holder-estimates}, and where $(B^i)_{1\leq i \leq N}$ is a family of independent Wiener processes. By symmetry, this final probability depends only on $\# \mathcal I$, so that the right hand side above is maximized when $\mathcal I=\{1,...,N\}$. We infer that
$$
\Pp\left(\eta\left(\hat \nu_t^{N}\right)-\eta\left(\hat \nu_{t+h}^{N}\right)\geq \la ; \hat \nu_t^N(0,\ee) < \la/2 \right) 
\leq \Pp\left( \frac{1}{N}\sum_{i=1}^N \1_{ \left\{ \sup_{s\in[0,h]} B_s^i - \inf_{s\in[0,h]} B_s^i \geq \frac{\ee-h}{\sigma} \right\}} \geq \la/2  \right).
$$
In the same way as for \eqref{estimate::choice:of:ee}, we choose $\ee(h)=h^{1/2} \log(1/h)$ so that $\lim_{h\to 0^+}\ee(h)=0$, and use Markov's inequality to get
$$
\Pp\left(\eta\left(\hat \nu_t^{N}\right)-\eta\left(\hat \nu_{t+h}^{N}\right)\geq \la ; \hat \nu_t^N(0,\ee) < \la/2 \right) 
\leq \frac{4\sigma}{\la(\log(1/h)-h^{1/2})}.
$$
This entails the desired result by taking the limit $h \to 0^+$.

Now we deal with the case of a left hand limit. Let $t\in (0,T]$ and $h \mapsto \ee(h)$ as defined above. Using a similar decomposition as before, we have for small enough $h>0$
\begin{multline*}
\Pp\left(\eta\left(\hat \nu_{t-h}^{N}\right)-\eta\left(\hat \nu_{t}^{N}\right)\geq \la \right) \\
\leq \Pp\left(\hat \nu_{t-h}^N(0,\ee) \geq \la/2\right) + \Pp\left(\eta\left(\hat \nu_{t-h}^{N}\right)-\eta\left(\hat \nu_{t}^{N}\right) \geq \la ; \hat \nu_{t-h}^N(0,\ee) < \la/2\right).
\end{multline*}
Appealing to Markov's inequality, estimate \eqref{convenient-bound-lemma34}, and estimate \eqref{eq:change of probability estimate-bis} of Section \ref{section2}, we have for small enough $h>0$
$$
\Pp\left(\hat \nu_{t-h}^N(0,\ee) \geq \la/2 \right) \leq 2 \la^{-1} \E\hat \nu_{t-h}^N(0,\ee) \leq  2C \la^{-1}  \left(1-e^{-\pi^2 t/8L^2} \right)^{-1/4}\ee^{1/4},
$$
whence
$$
\lim_{h\to 0^+}\lim\sup_N \Pp \left(\hat \nu_{t-h}^N(0,\ee(h)) \geq \la/2 \right) =0.
$$
On the other hand, we show by using the same steps as in \eqref{red-estimation-lemma45} that
$$
\Pp\left(\eta\left(\hat \nu_{t-h}^{N}\right)-\eta\left(\hat \nu_{t}^{N}\right) \geq \la ; \hat \nu_{t-h}^N(0,\ee) < \la/2\right) \leq \frac{4\sigma}{\la(\log(1/h)-h^{1/2})}.
$$
This entails the desired result by taking the limit $h\to 0^+$.
\end{proof} 

We are now in position to show tightness on $(D_{\S'_\R}, {\rm M1})$. 

\begin{proposition}[Tightness]\label{proposition-tightness}
The sequence of the laws of $(\hat \nu^N)_{N\geq 1}$ is tight on the space $(D_{\S'_\R}, \rm M1)$.
\end{proposition}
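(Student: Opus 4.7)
The plan is to use the reduction outlined just before the statement: tightness of $(\hat \nu^N)_{N\geq 1}$ on $(D_{\S'_\R}, \mathrm{M1})$ will follow from tightness of each real-valued projection $(\hat \nu^N(\phi))_{N\geq 1}$ on $(D_\R, \mathrm{M1})$, as $\phi$ ranges over a countable dense subset of $\S_\R$. For each such $\phi$ I would then invoke Whitt's characterisation \cite[Theorem 12.12.3]{Whitt2002}: compact containment is automatic from $|\hat \nu^N_t(\phi)| \leq \|\phi\|_\infty$, so it remains only to control the M1 oscillation function of $\hat \nu^N(\phi)$ uniformly in $N$ with high probability.

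The key algebraic step is to split $\phi = \psi + \phi(0)$ with $\psi(0)=0$, so that
\begin{equation*}
\hat \nu^N_t(\phi) = \hat \nu^N_t(\psi) + \phi(0)\,\eta(\hat \nu^N_t).
\end{equation*}
The second summand is nonincreasing in $t$, hence has vanishing M1 modulus. For the first summand the crucial point is that, because $\psi(0)=0$, the process $t \mapsto \psi(\hat X_t^i)\1_{t < \hat\tau^i}$ is in fact continuous on $[0,T)$: the killing at $\hat\tau^i$ produces no jump, since the pre-jump value $\psi(\hat X_{\hat\tau^i}^i)=\psi(0)$ and the post-jump value $0$ coincide. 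Applying It\^o's formula to $\psi(\hat X_{t\wedge \hat\tau^i}^i)$, summing over $i$ and dividing by $N$, I expect to obtain
\begin{equation*}
\hat \nu^N_t(\psi) = \hat \nu^N_0(\psi) + \int_0^t \hat \nu^N_s\!\left(\tfrac{\sigma^2}{2}\psi'' - q_{u,m}(s,\cdot)\psi'\right)\dd s + M^N_t(\psi) - \psi'(L)\,\frac{1}{N}\sum_{i=1}^N \xi^{\hat X^i}_{t\wedge \hat\tau^i},
\end{equation*}
where $M^N(\psi)$ is a martingale with quadratic variation of order $1/N$ (its bound coming from $\|\psi'\|_\infty$).

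The drift term is Lipschitz in $t$ by the uniform bound on $q_{u,m}$ provided by Lemma \ref{holder-estimates}; the martingale vanishes uniformly in $t$ in probability by Doob's maximal inequality; and the reflection sum $\frac{1}{N}\sum_i \xi^{\hat X^i}_{t\wedge \hat\tau^i}$ is nondecreasing in $t$, so its multiplication by the constant $\psi'(L)$ yields a monotone process. Combining with the already monotone term $\phi(0)\eta(\hat\nu^N_t)$, each $\hat\nu^N(\phi)$ therefore decomposes as a monotone part plus a uniformly Lipschitz part plus a vanishing martingale, whose M1 modulus of continuity can be controlled uniformly in $N$ with probability tending to one.

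The main obstacle I anticipate is a careful bookkeeping of the M1 modulus under this decomposition: while monotone functions have zero M1 modulus, the two monotone pieces appearing here ($\eta$ decreases, the local-time sum increases) may have opposite signs of monotonicity, so their sum with the continuous drift needs a direct oscillation estimate rather than simple subadditivity. This is where Lemmas \ref{mass-concentration-origin} and \ref{mass-loss-increment} will be essential, as they guarantee that the killing mechanism cannot produce concentrated downward jumps on short time intervals, uniformly in $N$, preventing any anomalous contribution to the M1 modulus of $\eta(\hat \nu^N)$.
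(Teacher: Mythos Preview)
Your strategy is essentially the paper's: reduce to real-valued projections via \cite[Theorem 3.2]{ledger2016-2}, then split off the monotone mass-loss term so that $\hat\nu^N(\phi)$ becomes a continuous piece plus $\phi(0)$ times a monotone process, and verify Whitt's M1 criterion. The difference lies in how you treat the continuous piece, and the ``main obstacle'' you flag is in fact self-inflicted.

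Rather than applying It\^o to $\hat\nu^N_t(\psi)$ and thereby generating a \emph{second} monotone local-time term $-\psi'(L)\tfrac{1}{N}\sum_i \xi^{\hat X^i}_{t\wedge\hat\tau^i}$ (whose sign you cannot control and which forces you to bound two opposite-monotone pieces simultaneously), the paper bounds the continuous part in one stroke. Writing $\bar\nu^N_t(\phi):=\tfrac{1}{N}\sum_k \phi(\hat X^k_{t\wedge\hat\tau^k})=\hat\nu^N_t(\phi)+\phi(0)\mathcal E^N_t$, one gets directly
\[
\E\big|\bar\nu^N_t(\phi)-\bar\nu^N_s(\phi)\big|^4
\le \|\phi_x\|_\infty^4\,\frac{1}{N}\sum_k \E\big|\hat X^k_{t\wedge\hat\tau^k}-\hat X^k_{s\wedge\hat\tau^k}\big|^4
\le C\|\phi_x\|_\infty^4\,|t-s|^2,
\]
by Lipschitzness of $\phi$ and Burkholder--Davis--Gundy applied to the stopped reflected SDE; the local-time contribution is absorbed here automatically. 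Since $\mathcal E^N$ is monotone its $H_\R$-distance vanishes, so the $H_\R$-bound required by \cite[Proposition 4.1]{ledger2016-2} reduces to the fourth-moment estimate above. Lemma~\ref{mass-loss-increment} is then invoked \emph{only} for the endpoint condition at $t=0$ and $t=T$, not to reconcile competing monotone terms. Your route can be made to work, but the paper's avoids the bookkeeping you anticipated.
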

\begin{proof}
We present a brief sketch to explain the main arguments, and refer to \cite[Proposition 5.1]{ledger2016-1} for a similar proof.

Thanks to \cite[Theorem 3.2]{ledger2016-2}, it is enough to show that the sequence of the laws of $\left(  \hat \nu^N(\phi) \right)_{N\geq 1}$ is tight on  $(D_\R, \rm M1)$ for any $\phi \in \S_\R$. To prove this, one can use the conditions of \cite[Theorem 12.12.3]{Whitt2002}, which can be rewritten in a convenient form by virtue of \cite{Avram1989}. From \cite[Proposition 4.1]{ledger2016-2} , we are done if we achieve the two following steps:
\begin{enumerate}
\item \label{req1:prop36} find $\a,\b,c>0$, such that
$$
\Pp\left( H_\R\left(\hat \nu_{t_1}^N(\phi), \hat \nu_{t_2}^N(\phi), \hat \nu_{t_3}^N(\phi)   \right) \geq \la   \right) \leq c \la^{-\a}|t_3-t_1|^{1+\b},
$$
for any $N\geq 1$, $\la>0$ and $0\leq t_1<t_2<t_3\leq T$, where
$$
H_\R\left(x_1,x_2,x_3  \right):= \inf_{0\leq \g \leq 1} | x_2-(1-\g)x_1- \g x_3 | \quad \mbox{ for } x_1,x_2,x_3 \in \R;
$$
\item show that
$$
\lim_{h \to 0^+} \lim_{N}\Pp\left( \sup_{t\in(0,h)}| \hat \nu_{t}^N(\phi)- \hat \nu_{0}^N(\phi)| + \sup_{t\in(T-h,T)}| \hat \nu_{T}^N(\phi)- \hat \nu_{t}^N(\phi)| \geq \la  \right)=0.
$$
\end{enumerate}
The key step is to consider the following decomposition \cite[Proposition 4.2]{ledger2016-2}:
\begin{equation}\label{decomposition:trick}
\bar \nu_t^N(\phi):=\frac{1}{N} \sum_{k=1}^N \phi( \hat X_{t\wedge \hat \tau^k }^{k}) = \hat \nu_t^N(\phi) + \phi(0)\mathcal E_t^N,
\end{equation}
where 
$$
\mathcal E_t^N:= 1-\eta\left(\hat \nu_t^{N}\right)
$$
is the exit rate process, which quantifies the fraction of firms out of market.
Since $\left(\mathcal E_t^N \right)_{t\in[0,T]}$ is monotone increasing we have
$$
\inf_{0 \leq \g \leq 1} \left| \mathcal E_{t_2}^N-(1-\g)\mathcal E_{t_1}^N-\g \mathcal E_{t_3}^N \right|=0,
$$
so that
$$
H_\R\left(\hat \nu_{t_1}^N(\phi), \hat \nu_{t_2}^N(\phi), \hat \nu_{t_3}^N(\phi)   \right) \leq  \left| \bar \nu_{t_1}^N(\phi)- \bar \nu_{t_2}^N(\phi)  \right| + \left| \bar \nu_{t_2}^N(\phi) -\bar \nu_{t_3}^N(\phi)   \right|.
$$
Thus, by virtue of Markov's inequality 
\begin{multline*}
\Pp\left( H_\R\left(\hat \nu_{t_1}^N(\phi), \hat \nu_{t_2}^N(\phi), \hat \nu_{t_3}^N(\phi)   \right) \geq \la   \right)\\ 
\leq 8\la^{-4} \left( \E\left| \bar \nu_{t_1}^N(\phi)- \bar \nu_{t_2}^N(\phi)  \right|^4 +   \E\left| \bar \nu_{t_2}^N(\phi)- \bar \nu_{t_3}^N(\phi)  \right|^4 \right).
\end{multline*}
Therefore, we deduce requirement \eqref{req1:prop36} from the following estimate:
\begin{multline}\label{estimate1:tightness}
\forall s,t \in[0,T], \\
\E\left| \bar \nu_{t}^N(\phi)- \bar \nu_{s}^N(\phi)  \right|^4 \leq \| \phi_x\|_{\infty}^4 \frac{1}{N}\sum_{k=1}^{N}  \E| \hat X_{t\wedge \hat \tau^{k}}^{k} -\hat X_{s\wedge \hat \tau^{k}}^{k}   |^4 
\leq C \| \phi_x\|_{\infty}^4 |t-s|^2;
\end{multline}
where we have used H\"older's inequality and the Burkholder-Davis-Gundy inequality \cite[Thm IV.42.1]{Rogers2000martingales}. 

The second requirement is also obtained by using the latter estimate, decomposition \eqref{decomposition:trick},  and Lemma \ref{mass-loss-increment}. In fact, we have
\begin{multline*}
\Pp\left( \sup_{t\in(0,h)}| \hat \nu_{t}^N(\phi)- \hat \nu_{0}^N(\phi)|  \geq \la  \right)  \\
\leq  \Pp\left( \sup_{t\in(0,h)}|  \bar \nu_{t}^N(\phi)- \bar \nu_{0}^N(\phi)|  \geq \la/2  \right) + \Pp\left(|\phi(0)| \mathcal E_h^N \geq \la/2 \right),
\end{multline*}
so that the desired result follows thanks to \eqref{estimate1:tightness}, and Lemma \ref{mass-loss-increment}. By the same way, we deal with the second term $\Pp\left( \sup_{t\in(T-h,T)}| \hat \nu_{T}^N(\phi)- \hat \nu_{t}^N(\phi)|  \geq \la  \right)$. 
\end{proof}

\subsubsection{Full convergence}
We arrive now at the final ingredient for the proof of Lemma \ref{propagation-of-chaos-lemma}. Let us set 
$$
\C^{test} := \left\{ \phi \in \C_c^\infty([0,T) \times \bar Q) \ \big| \ \phi(t,0) = \phi_x(t,L) = 0, \ \ \forall t \in (0,T)  \right\}.
$$

We start by deriving an equation for $(\hat \nu_t^N)_{t\in[0,T]}$.

\begin{proposition}\label{propositon-finite-evolution-equation}
For every $N \geq 1$ and $\phi \in \C^{test}$, it holds that
$$
\int_0^L  \phi(0,.)\dd \hat \nu_0^N = \int_0^T \int_0^L \left( -\phi_t-\frac{\sigma^2}{2}\phi_{xx} + q_{u,m} \phi_x  \right) \dd \hat \nu_s^N \dd s + I_N(\phi)\quad a.s.,
$$
where 
$$
I_N(\phi) :=-\frac{\sigma}{N} \sum_{k=1}^N \int_0^T \phi_x\left(s,\hat X_s^{k}\right) \1_{s<\hat \tau^{k}} \dd W_s^k.
$$
\end{proposition}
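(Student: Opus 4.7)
The plan is to apply It\^o's formula to $\phi(t,\hat X_t^k)$ individually for each player $k$, stopped at $\hat\tau^k$, then sum and average. The key observation is that all three boundary/terminal contributions that would normally appear vanish thanks to the structure of the test function class $\C^{test}$.

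More precisely, for fixed $k$ and $\phi \in \C^{test}$, the process $(\hat X_t^k)_{t\in[0,T]}$ is a continuous semimartingale with
$$
\dd \hat X_t^k = -q_{u,m}(t,\hat X_t^k)\dd t + \sigma \dd W_t^k - \dd \xi_t^{\hat X^k}, \qquad \dd\langle \hat X^k\rangle_t = \sigma^2 \dd t,
$$
so It\^o's rule applied on $[0,\hat\tau^k]$ gives
\begin{multline*}
\phi(\hat\tau^k, \hat X_{\hat\tau^k}^k) - \phi(0,V_k)
= \int_0^{\hat\tau^k}\!\!\Big(\phi_t - q_{u,m}\phi_x + \tfrac{\sigma^2}{2}\phi_{xx}\Big)(s,\hat X_s^k)\dd s\\
{}+ \sigma\!\int_0^{\hat\tau^k}\!\!\phi_x(s,\hat X_s^k)\dd W_s^k - \int_0^{\hat\tau^k}\!\!\phi_x(s,\hat X_s^k)\dd \xi_s^{\hat X^k}.
\end{multline*}
The local time integral vanishes because $\xi^{\hat X^k}$ grows only on $\{\hat X^k = L\}$, where $\phi_x(t,L)=0$ by definition of $\C^{test}$. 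The left-hand boundary term is zero in both regimes of the stopping time: if $\hat\tau^k < T$ then $\hat X_{\hat\tau^k}^k = 0$ and $\phi(\cdot,0)\equiv 0$; if $\hat\tau^k = T$ then $\phi(T,\cdot)\equiv 0$ since $\phi$ has compact support in $[0,T)\times\bar Q$.

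Thus for every $k$,
$$
\phi(0,V_k) = \int_0^T\Big({-}\phi_t - \tfrac{\sigma^2}{2}\phi_{xx} + q_{u,m}\phi_x\Big)(s,\hat X_s^k)\,\mathds 1_{s<\hat\tau^k}\dd s - \sigma\!\int_0^T\!\phi_x(s,\hat X_s^k)\mathds 1_{s<\hat\tau^k}\dd W_s^k,
$$
where we have rewritten integrals over $[0,\hat\tau^k]$ in terms of the indicator $\mathds 1_{s<\hat\tau^k}$. Averaging over $k=1,\dots,N$, the left-hand side becomes $\int_0^L \phi(0,\cdot)\dd\hat\nu_0^N$ (noting that at $t=0$ no agent has yet been absorbed since ${\rm supp}(m_0)\subset(0,L]$), the first term on the right becomes $\int_0^T\!\int_0^L(-\phi_t - \tfrac{\sigma^2}{2}\phi_{xx}+q_{u,m}\phi_x)\dd\hat\nu_s^N\dd s$ by the definition of the empirical process, and the stochastic integrals aggregate to $I_N(\phi)$, yielding the announced identity almost surely.

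The only technical point worth flagging is the a.s.\ simultaneous validity of the identity for all times $t$ (needed so that the averaging produces the empirical process as defined), which follows from continuity of each $\hat X^k$ and the standard versions of It\^o's formula for reflected diffusions (see e.g.\ the Skorokhod-problem references already invoked in Section \ref{sec:notation}). No further a priori estimate is required: $q_{u,m}$ is bounded and Lipschitz by Lemma \ref{holder-estimates}, so the stopped stochastic integrals are bona fide $L^2$-martingales and there is no integrability obstruction.
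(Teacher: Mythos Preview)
Your proof is correct and follows essentially the same route as the paper: apply It\^o to each particle, use the boundary conditions of $\C^{test}$ to kill the local-time and terminal contributions, then average. The only cosmetic difference is that the paper applies It\^o to the stopped process $\hat X_{t\wedge\hat\tau^k}^k$ on all of $[0,T]$ and then uses $\phi_t(\cdot,0)=0$ to dispose of the post-stopping $\phi_t$-contribution, whereas you integrate directly on $[0,\hat\tau^k]$; both are equivalent.
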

\begin{proof}
Let us consider $\phi \in \C^{test}$. First observe that for any $k\in \{ 1,...,N\}$, and $t\in [0,T]$
$$
\hat X_{t\wedge \hat \tau^{k}}^{k} = V_k - \int_0^t \hat q_s^k \1_{s < \tau^{\hat q^k}} \dd s + \sigma W_{t\wedge \tau^{\hat q^k}}^k - \xi_t^{\hat X^{k}}.
$$
Hence, for any $k\in \{ 1,...,N\}$, the random process $\left(\hat X_{t\wedge \hat \tau^{k}}^{k} \right)_{t\in [0,T]}$ is a continuous semimartingale, and by applying It\^o's rule we have:
\begin{multline*}
\phi (T, \hat X_{\hat \tau^{k}}^{k} ) - \phi(0,V_k) + \int_0^T  \phi_{x} \left(s, \hat X_{s\wedge \hat \tau^{k}}^{k} \right)  \dd \xi_s^{\hat X^k}\\
=\int_0^T  \left\{ \frac{\sigma^2}{2}\phi_{xx} (s,\hat X_{s}^{k} ) - q_{u,m}(s, \hat X_{s}^{k})\phi_{x} (s,\hat X_{s}^{k})  \right\}  \1_{s< \hat \tau^{k}} \dd s\\
+ \int_0^T \phi_t \left(s,\hat X_{s\wedge \hat \tau^{k}}^{k} \right) \dd s+ \sigma \int_0^T  \phi_{x} \left(s,\hat X_{s}^{k} \right)  \1_{s< \hat \tau^{k}} \dd W_s^k.
\end{multline*}
By using the boundary conditions satisfied by $\phi$, and noting that $\phi_t(t,0)=0$ for any $t\in(0,T)$, we deduce that
\begin{multline*}
- \phi(0,V_k)- \sigma \int_0^T  \phi_{x} \left(s,\hat X_{s}^{k} \right)  \1_{s<\hat \tau^{k}} \dd W_s^k \\
=\int_0^T  \left\{ \phi_t \left(s,\hat X_{s}^{k} \right) +\frac{\sigma^2}{2}\phi_{xx} \left(s,\hat X_{s}^{k} \right)  - q_{u,m}(s, \hat X_{s}^{k})\phi_{x} \left(s,\hat X_{s}^{k} \right)  \right\} \1_{s< \hat \tau^{k}} \dd s
\end{multline*}
The desired result follows by summing over $k\in\{1,...,N\}$, and multiplying by $N^{-1}$.
\end{proof}

By virtue of \cite[Theorem 3.2]{ledger2016-2}, the tightness of the sequence of laws of $(\hat \nu^N)_{N\geq 1}$ ensures that this sequence is relatively compact on $(D_{\S'_\R}, \rm M1)$. Consequently, Proposition \ref{proposition-tightness} entails the existence of a subsequence (still denoted $(\hat \nu^N)_{N\geq 1}$) such that 
$$
\hat \nu^N \Rightarrow \hat \nu^\ast, \quad \mbox{ on } (D_{\S'_\R}, \rm M1).
$$
Thanks to \cite[Proposition 2.7 (i)]{ledger2016-2},
$$
\forall \phi\in \S_\R, \quad  \hat \nu^N(\phi) \Rightarrow  \hat \nu^\ast(\phi), \quad \mbox{ as } N\to \infty, \quad \mbox{ on } (D_\R, \rm M 1).
$$

To avoid possible confusion about multiple distinct limit points, we will denote $\hat \nu^\ast$ any limiting processes that realizes one of these limiting laws. First, we note that $\hat \nu^{\ast}$ is a $\tilde \P(\bar Q)$-valued process:

\begin{proposition}
For every $t\in[0,T]$, $\hat \nu_t^\ast$ is almost surely supported on $\bar Q$ and belongs to $\tilde \P(\bar Q)$. 
\end{proposition}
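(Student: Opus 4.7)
The idea is to transfer to the M1-limit $\hat\nu^\ast$ the fact that every prelimit $\hat\nu^N$ takes values in the closed convex subset $\tilde\P(\bar Q)\subset\S'_\R$. Using Skorokhod's representation theorem, I may work on a probability space on which $\hat\nu^N\to\hat\nu^\ast$ almost surely in $(D_{\S'_\R},\mathrm{M1})$. The projection property of the M1 topology on $\S'_\R$-valued paths \cite[Proposition 2.7]{ledger2016-2} then yields, for every $\phi\in\S_\R$, the almost sure convergence $\hat\nu^N(\phi)\to\hat\nu^\ast(\phi)$ in the scalar space $(D_\R,\mathrm{M1})$.

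The key topological input I would invoke is the following preservation property: if $x_n\to x$ in $(D_\R,\mathrm{M1})$ and there is a closed set $A\subseteq\R$ with $x_n(t)\in A$ for every $n$ and every $t$, then $x(t)\in A$ for every $t\in[0,T]$. (This is a direct consequence of the graph characterization of M1: the completed graph of $x$ is the Hausdorff limit of the completed graphs of the $x_n$'s, hence contained in $[0,T]\times A$.) Armed with this, I would verify three bookkeeping properties at the level of test functions. For $\phi\in\S_\R$ with $\mathrm{supp}(\phi)\cap\bar Q=\emptyset$ one has $\hat\nu^N_t(\phi)=0$ identically, so the preservation property yields $\hat\nu^\ast_t(\phi)=0$ for every $t$, almost surely. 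For non-negative $\phi\in\S_\R$ one has $\hat\nu^N_t(\phi)\geq 0$, so $\hat\nu^\ast_t(\phi)\geq 0$ for every $t$, almost surely. Finally, fixing a cutoff $\chi\in\S_\R$ with $0\leq\chi\leq 1$ and $\chi\equiv 1$ on $\bar Q$, one has $\hat\nu^N_t(\chi)\leq 1$, so $\hat\nu^\ast_t(\chi)\leq 1$ for every $t$, almost surely.

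I would then run the three steps above simultaneously along countable families of test functions that are dense enough to characterize positivity, support, and mass on $\bar Q$ (for instance, a countable dense subfamily of $\C_c(\R\setminus\bar Q)$ together with a countable dense subfamily of the non-negative part of $\C_c(\R)$, suitably approximated by Schwartz functions). On the intersection of the corresponding almost sure events we obtain, for every $t$, a non-negative functional of mass at most $1$ on $\C_0(\R)$ that vanishes on functions supported away from $\bar Q$; Riesz representation then identifies $\hat\nu^\ast_t$ with an element of $\tilde\P(\bar Q)$. The main obstacle I expect is justifying the closed-set preservation under M1 (the pointwise evaluation $x\mapsto x(t)$ fails to be M1-continuous at jump times, so one really needs the graph formulation), together with the careful density argument that lets me replace test functions in the countable families by arbitrary elements of $\S_\R$ on a \emph{single} almost sure event valid for all $t\in[0,T]$.
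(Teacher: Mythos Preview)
Your proposal is correct and aligns with the paper's approach: the paper simply invokes the Portmanteau theorem together with Riesz representation and defers all details to \cite[Proposition 5.3]{ledger2016-1}, and what you have written is precisely an explicit unpacking of that argument (closedness in $\mathrm{M1}$ of the set of $\tilde\P(\bar Q)$-valued paths, verified via test functions, followed by Riesz). One small remark: your closed-set preservation lemma, as stated for an arbitrary closed $A\subset\R$, relies on the completed graphs of the $x_n$ lying in $[0,T]\times A$, which requires $A$ to be an interval (so that the vertical segments at jumps stay inside); since all three of your applications use closed intervals ($\{0\}$, $[0,\infty)$, $(-\infty,1]$), this causes no problem, but you may want to state the lemma for closed intervals rather than general closed sets.
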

\begin{proof}
This follows from the ``Portmanteau theorem" and the Riesz representation theorem. We omit the details and refer to \cite[Proposition 5.3]{ledger2016-1}.
\end{proof}

Next, we recover the partial differential equation satisfied by the process $(\hat \nu_t^\ast)_{t\in[0,T]}$. 

\begin{lemma}\label{equation-accumulation-point}
For every $\phi \in \C^{test}$, it holds that
$$
\int_0^L  \phi(0,.) \dd m_{0}  +\int_0^T \int_0^L \left( \phi_t+\frac{\sigma^2}{2}\phi_{xx} - q_{u,m} \phi_x  \right) \dd \hat \nu_s^{\ast} \dd s = 0 \quad { a.s.}
$$
\end{lemma}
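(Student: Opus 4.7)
The plan is to start from the identity in Proposition~\ref{propositon-finite-evolution-equation} and pass to the limit term by term along the subsequence for which $\hat\nu^N\Rightarrow\hat\nu^\ast$ in $(D_{\S'_\R},\mathrm{M}1)$. By Skorokhod's representation theorem I may work on an enlarged probability space on which the convergence holds almost surely; then by \cite[Proposition 2.7]{ledger2016-2} the projections $\hat\nu^N(\psi)$ converge to $\hat\nu^\ast(\psi)$ almost surely in $(D_\R,\mathrm{M}1)$ for every $\psi\in\S_\R$.

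The stochastic integral $I_N(\phi)$ is a sum of $N$ independent mean-zero It\^o integrals, so by It\^o's isometry and boundedness of $\phi_x$,
$$
\E[I_N(\phi)^2]=\frac{\sigma^2}{N^2}\sum_{k=1}^N\E\int_0^T |\phi_x(s,\hat X_s^k)|^2 \mathds{1}_{s<\hat\tau^k}\dd s\leq \frac{\sigma^2 T\|\phi_x\|_\infty^2}{N}\to 0,
$$
so $I_N(\phi)\to 0$ in $L^2(\Pp)$, hence in probability. The initial term is handled by the strong law of large numbers: since $\hat\nu_0^N(\phi(0,\cdot))=N^{-1}\sum_k \phi(0,V_k)$ with $V_k$ i.i.d.\ of law $m_0$, one has $\hat\nu_0^N(\phi(0,\cdot))\to\int_0^L\phi(0,\cdot)\dd m_0$ almost surely.

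The main obstacle is passing to the limit in the space--time integral
$$
\int_0^T\!\!\int_0^L F(s,x)\dd\hat\nu_s^N(x)\dd s\ \longrightarrow\ \int_0^T\!\!\int_0^L F(s,x)\dd\hat\nu_s^\ast(x)\dd s,
$$
where $F(s,x):=-\phi_t(s,x)-\tfrac{\sigma^2}{2}\phi_{xx}(s,x)+q_{u,m}(s,x)\phi_x(s,x)$. By Lemma~\ref{holder-estimates} the coefficient $q_{u,m}$ is H\"older continuous on $\overline{Q_T}$, so $F$ is bounded and continuous on $[0,T]\times\bar Q$; it extends smoothly in $x$ to $[0,T]\times\R$ with no effect on the integrals, since $\hat\nu_s^N$ and $\hat\nu_s^\ast$ are supported on $\bar Q$. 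For any fixed $\psi\in\S_\R$, $\mathrm{M}1$-convergence of the c\`adl\`ag real-valued path $s\mapsto\hat\nu_s^N(\psi)$ implies pointwise convergence to $s\mapsto\hat\nu_s^\ast(\psi)$ at every continuity point of the limit (cf.~\cite[Theorem 12.4.1]{Whitt2002}), hence Lebesgue-a.e.\ on $[0,T]$. Since $|\hat\nu_s^N(\psi)|\leq\|\psi\|_\infty$, dominated convergence yields
$$
\int_0^T g(s)\hat\nu_s^N(\psi)\dd s\ \longrightarrow\ \int_0^T g(s)\hat\nu_s^\ast(\psi)\dd s
$$
for every bounded measurable $g:[0,T]\to\R$. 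To bridge from such separable integrands to $F$, I will use Stone--Weierstrass to uniformly approximate $F$ on $[0,T]\times[0,L]$ by finite tensor sums $\sum_i g_i(s)\psi_i(x)$ with $\psi_i$ smooth and compactly supported; the uniform mass bound $\hat\nu_s^N(\bar Q),\,\hat\nu_s^\ast(\bar Q)\leq 1$ controls the error uniformly in $N$, allowing the approximation to be passed through the limit.

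Combining these convergences in the identity of Proposition~\ref{propositon-finite-evolution-equation} and rearranging yields the claim. The technical crux is that although $\mathrm{M}1$ is a rather weak topology, it is compensated by two features of our setting: paired with any smooth $\psi$, the projections still converge Lebesgue-a.e.\ in time, and the total mass is uniformly bounded by $1$. These two facts together are precisely what is needed to integrate a general continuous space--time test function against the empirical measure and pass to the limit.
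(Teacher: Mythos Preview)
Your proof is correct in substance and follows the same skeleton as the paper's: show that the remainder from Proposition~\ref{propositon-finite-evolution-equation} vanishes, and show that the space--time integral functional is M1-continuous so as to pass to the limit. One caveat: invoking Skorokhod representation is delicate here, because the identity in Proposition~\ref{propositon-finite-evolution-equation}, the It\^o isometry bound on $I_N(\phi)$, and the SLLN for the initial term all live on the \emph{original} probability space, whereas the a.s.\ convergence $\tilde\nu^N\to\tilde\nu^*$ lives on the Skorokhod space; as written you are combining convergences on two different spaces. This can be patched by transferring everything via laws (the functional $\Phi(\nu):=\nu_0(\phi(0,\cdot))-\int_0^T\!\int_0^L F\,\dd\nu_s\,\dd s$ satisfies $\Phi(\hat\nu^N)=-I_N(\phi)$ a.s.\ on the original space, hence $\Phi(\tilde\nu^N)\to 0$ in $L^2$ on the new one), but it is simpler to drop Skorokhod altogether: your Stone--Weierstrass / pointwise-at-continuity-points / dominated-convergence argument already establishes that $\nu\mapsto\int_0^T\!\int_0^L F\,\dd\nu_s\,\dd s$ is sequentially continuous on $\A$ for the M1 topology, so the continuous mapping theorem gives the weak convergence of the space--time integral directly---which is exactly the paper's route.

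The genuine differences are in the details of that continuity argument and in the treatment of the initial term. The paper extends $q_{u,m}$ to a compactly supported continuous function ${\bf q}_{u,m}$ on $[0,T]\times\R$ and mollifies in $x$ to obtain ${\bf q}_{u,m}^n(s,\cdot)\phi_x(s,\cdot)\in\S_\R$ for each fixed $s$, thereby reducing to test functions in $\S_\R$ slice by slice; you instead approximate the whole integrand $F$ uniformly by tensor sums $\sum_i g_i(s)\psi_i(x)$ via Stone--Weierstrass, which is arguably more transparent and avoids the extension/mollification machinery. For the initial term, the paper invokes the Horowitz--Karandikar inequality to get the quantitative bound $\E[\mu_N(\phi)^2]\leq C N^{-2/5}$; your SLLN (or the elementary variance bound $\mathrm{Var}(\hat\nu_0^N(\phi(0,\cdot)))\leq\|\phi(0,\cdot)\|_\infty^2/N$) is more economical, since no rate is needed here.
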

\begin{proof}
Let us consider $\phi \in \C^{test}$ and set:
$$
\mu(\phi):= \int_0^L  \phi(0,.)  \dd m_0 +\int_0^T \int_0^L \left( \phi_t+\frac{\sigma^2}{2}\phi_{xx} - q_{u,m} \phi_x  \right) \dd \hat \nu_s^{\ast} \dd s;
$$
and
$$
\mu_N(\phi) := \int_0^L \phi(0,.) \dd m_0 +\int_0^T \int_0^L \left( \phi_t+\frac{\sigma^2}{2}\phi_{xx} - q_{u,m}  \phi_x  \right) \dd \hat \nu_s^{N} \dd s.
$$
Owing to Proposition \ref{propositon-finite-evolution-equation} we have
$$
\mu_N(\phi)= I_N(\phi)+ \int_0^L \phi(0,.)\dd(m_0- \hat \nu_0^{N}).
$$
Note that
$$
\E I_N(\phi)^2 \leq C \| \phi_x \|_{\infty}^2 N^{-1}.
$$
Hence, by appealing to Horowitz-Karandikar inequality (see e.g. \cite[Theorem 10.2.1]{product:measures}) we deduce that
$$
\E\mu_N^2(\phi) \leq C \| \phi_x \|_{\infty}^2 N^{-2/5}.
$$
Consequently, to conclude the proof it is enough to show that
$$
\mu_N(\phi) \Rightarrow  \mu(\phi) \quad \mbox{ as } N\to \infty.
$$

Let $\A$ be the set of elements in $D_{\S'_\R}$ that take values in $\tilde \P(\bar Q)$, and consider a sequence $(\psi^N) \subset \A$ which converges to some $\psi$ in $\A$ with respect to the $\rm M1$ topology.  
Let ${\bf q}_{u,m}$  be a continuous function on $[0,T]\times \R$, which satisfies the following conditions: 
\begin{subequations}
\begin{equation}\label{extension-definition}
{{\bf q}_{u,m}}_{|_{\overline{Q}_T}} \equiv q_{u,m}; \quad \| {\bf q}_{u,m} \|_{\infty} = \| q_{u,m}\|_{\infty}; \quad  \forall t\in[0,T],  \ \ {\rm supp } \ {\bf q}_{u,m}(t,.)\subset (-L, 2L).
\end{equation}
We also define the sequence 
\begin{equation}\label{extension-regularization}
{\bf q}_{u,m}^n(t,x) := \left( {\bf q}_{u,m}(t,.)\ast \xi_n  \right)(x), \quad n\geq 1,
\end{equation}
\end{subequations}
where $\xi_n(x):= n\xi(nx)$ is a compactly supported mollifier on $\R$.

We have
\begin{multline*}
J:= \left| \int_0^T \int_0^L q_{u,m} \phi_x   \dd \psi_s^N \dd s- \int_0^T\int_0^L q_{u,m}  \phi_x   \dd  \psi_s \dd s \right| \\
= \left| \int_0^T \int_\R {\bf q}_{u,m} \phi_x   \dd \psi_s^N \dd s- \int_0^T\int_\R {\bf q}_{u,m}  \phi_x   \dd  \psi_s \dd s \right| \\
\leq 2\| \phi_x\|_{\infty} \left\| {\bf q}_{u,m}^n-{\bf q}_{u,m} \right\|_{\infty}  \\
+ \left| \int_0^T  \int_\R {\bf q}_{u,m}^n \phi_x  \dd( \psi_s^{N} -   \psi_s)  \dd s  \right|  =: J_1+J_2. 
\end{multline*}
Since ${\bf q}_{u,m}^n(s,.) \phi_x(s,.) \in \S_\R$ for any $s\in[0,T]$, then $J_2$ vanishes as $\psi^N \to \psi$ . On the other hand, note that $J_1$ also vanishes as $n \to +\infty$ so that we obtain $\lim_N J=0$. 
Moreover, one easily checks that
$$
\int_0^T \int_0^L  F  \dd \psi_s^N \dd s \to \int_0^T\int_0^L F  \dd  \psi_s \dd s, \quad F\equiv \phi_t, \phi_{xx} \ \ \mbox{ as } \ \ N\to +\infty.
$$
Therefore, by virtue of the continuous mapping theorem, we obtain that $\mu_N(\phi) \Rightarrow  \mu(\phi)$, which concludes the proof.
\end{proof}

We are now in position to prove Lemma \ref{propagation-of-chaos-lemma}.

\begin{proof}[Proof of Lemma \ref{propagation-of-chaos-lemma}] From Lemma \ref{equation-accumulation-point}, we know that $\dd {\bf \nu}^\ast=\dd \hat \nu_t^{\ast} \dd t$ and $\dd {\bf m}=\dd m(t) \dd t$ both satisfy (almost surely) the same Fokker-Planck equation in the sense of measures (cf. Appendix \ref{App2}). By invoking the uniqueness of solutions to that equation (cf. Proposition \ref{uniqueness:measure-valued}), we deduce that  $\hat \nu^{\ast} \equiv m$ almost surely. Since all converging sub-sequences converge weakly toward $m$, we infer that 
$
\hat \nu^N \Rightarrow m,$ on $(D_{\S'_\R}, \rm M1).
$
\end{proof}

\subsection{Proof of Theorem \ref{Theorem-epsilon::Nash-Cournot}} \label{sec:proof of epsilon eq}
We start by collecting the following technical result whose proof is given in Appendix \ref{App1}.

\begin{lemma}\label{continuity:M1}
Fix $n\geq 1$, define $\A$ to be all elements in $D_{\S'_\R}$ that take values in $\tilde \P(\bar Q)$, and let $\Psi_m$ (resp. $\Psi_{\bf q}^n$) be the map defined from $D_{\S'_\R} $ into $D_{\S'_\R} $ (resp. from $\A$ into $D_\R$) such that
$$
\Psi_m(\nu)(t):= \nu(t)- m(t)   \quad \mbox{ and } \quad   \Psi_{\bf q}^n(\nu)(t) := \left| \int_{\R} {\bf q}_{u,m} ^{n}(t,.) \dd \nu(t)  \right|.
$$
Then $\Psi_m,  \Psi_{\bf q}^n$  are continuous with respect to the $\rm M1$ topology.
\end{lemma}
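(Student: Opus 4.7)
Both continuity statements rest on the projection characterisation of M1 convergence: by \cite[Proposition 2.7]{ledger2016-2}, $\nu^k \to \nu$ in $(D_{\S'_\R}, {\rm M1})$ if and only if $\nu^k(\phi) \to \nu(\phi)$ in $(D_\R, {\rm M1})$ for every $\phi \in \S_\R$. I will also use two elementary stability properties of the scalar M1 topology, both proved via parametric representations: if $h_k \to h$ in $(D_\R, {\rm M1})$ and $g \in C([0,T])$, then $h_k + g \to h + g$ and $h_k \cdot g \to h \cdot g$ in $(D_\R, {\rm M1})$; and the continuous map $|\cdot| : \R \to \R$ preserves M1 convergence pointwise.

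\textbf{Continuity of $\Psi_m$.} From Step 2 of the proof of Theorem \ref{theorem:wellposedness}, the map $t \mapsto m(t)$ is continuous from $[0,T]$ into $\M(\bar Q)$ in the strong topology, so $t \mapsto \langle m(t), \phi \rangle$ belongs to $C([0,T])$ for every $\phi \in \S_\R$. Given $\nu^k \to \nu$ in $(D_{\S'_\R}, {\rm M1})$, the projection characterisation gives $\nu^k(\phi) \to \nu(\phi)$ in $(D_\R, {\rm M1})$, and subtracting the continuous function $\langle m(\cdot), \phi \rangle$ preserves M1 convergence. Hence $\Psi_m(\nu^k)(\phi) \to \Psi_m(\nu)(\phi)$ in $(D_\R, {\rm M1})$ for every $\phi \in \S_\R$, and a second application of the projection characterisation yields $\Psi_m(\nu^k) \to \Psi_m(\nu)$ in $(D_{\S'_\R}, {\rm M1})$.

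\textbf{Continuity of $\Psi_{\bf q}^n$.} By construction \eqref{extension-definition}--\eqref{extension-regularization}, ${\bf q}_{u,m}^n$ is continuous on $[0,T] \times \R$ with $x$-support in a fixed compact set $K \subset \R$. Stone--Weierstrass on $C([0,T] \times K)$, combined with a smooth cut-off in $x$, produces a sequence of finite tensor sums
\[
\psi^{(J)}(t,x) = \sum_{i=1}^J f_i^{(J)}(t)\, g_i^{(J)}(x), \qquad f_i^{(J)} \in C([0,T]), \ g_i^{(J)} \in \S_\R,
\]
with $\|\psi^{(J)} - {\bf q}_{u,m}^n\|_\infty \to 0$. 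Since $\nu(t) \in \tilde\P(\bar Q)$ for $\nu \in \A$, the sub-probability bound gives
\[
\sup_{\nu \in \A,\ t \in [0,T]} \left| \Psi_{\bf q}^n(\nu)(t) - \Big| \sum_{i=1}^J f_i^{(J)}(t)\, \nu(t)(g_i^{(J)}) \Big| \right| \leq \|\psi^{(J)} - {\bf q}_{u,m}^n\|_\infty.
\]
Thus $\Psi_{\bf q}^n$ is a uniform (hence M1) limit on $\A$ of the maps $\Phi^{(J)}(\nu)(t) := |\sum_{i=1}^J f_i^{(J)}(t) \nu(t)(g_i^{(J)})|$, and it suffices to show each $\Phi^{(J)}$ is M1-continuous. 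The latter decomposes into: (i) projections $\nu \mapsto \nu(g_i^{(J)})$, M1-continuous by the projection characterisation; (ii) multiplication by the continuous $f_i^{(J)}$; (iii) a finite sum; (iv) composition with $|\cdot|$.

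\textbf{Main obstacle.} The delicate step is (iii): in general, summing two M1-convergent sequences in $D_\R$ need not yield M1 convergence, because their jumps need not be compatible. The resolution is that all projections $\nu^k(\phi)$ arise from a single parametric representation $(U_k, r_k) \to (U, r)$ of $\nu^k \to \nu$ in $D_{\S'_\R}$ (produced by Ledger's nuclear-space construction); evaluating each such representation against $g_i^{(J)}$ and scaling by $f_i^{(J)}(r_k)$ yields a \emph{common} time-change $r_k$ for all $i$, so that the linear combination $\sum_i f_i^{(J)}(r_k)\, U_k(g_i^{(J)})$ converges uniformly to $\sum_i f_i^{(J)}(r) U(g_i^{(J)})$. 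This furnishes a parametric representation of $\Phi^{(J)}(\nu^k) \to \Phi^{(J)}(\nu)$ in $(D_\R, {\rm M1})$ (the final absolute value is handled pointwise in the parametric representation), completing the proof.
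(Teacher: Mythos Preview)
Your route differs from the paper's: you lean on a projection characterisation of M1 convergence and a Stone--Weierstrass reduction, whereas the paper works directly with a single $\S'_\R$-valued parametric representation and the seminorm metrics $\ddd_{B,{\rm M1}}$ over bounded sets $B\subset\S_\R$, deriving explicit moduli of continuity (Step~1 computes a modulus $\omega_m$ for $t\mapsto m(t)$, Step~2 shows $\ddd_{B,{\rm M1}}(\Psi_m(x),\Psi_m(y))\le 2\e+C(B)\omega_m(\e)$ via the representation $s\mapsto(z_v(s)-m(t_v(s)),t_v(s))$, and Step~3 does the analogous thing for $\Psi_{\bf q}^n$ using that $t\mapsto{\bf q}_{u,m}^n(t,\cdot)\in\S_\R$ is continuous with range in a fixed bounded set $B_n$).

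There are two concrete gaps in your argument. First, for $\Psi_m$ you need the \emph{reverse} implication ``$\nu^k(\phi)\to\nu(\phi)$ in $(D_\R,{\rm M1})$ for all $\phi$'' $\Rightarrow$ ``$\nu^k\to\nu$ in $(D_{\S'_\R},{\rm M1})$''. The paper only ever invokes \cite[Proposition~2.7(i)]{ledger2016-2} in the forward direction, and its own proof of this lemma estimates $\ddd_{B,{\rm M1}}$ directly precisely because projection-by-projection convergence does not automatically furnish a \emph{single} parametric representation valid uniformly over a bounded set $B$. You should verify that Ledger's result actually gives the reverse implication; if it does not, your $\Psi_m$ step is incomplete. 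Your own ``Main obstacle'' fix---a common $\S'_\R$-valued representation $(U_k,r_k)$---is exactly the paper's device, and applying it to $\Psi_m$ (noting $(U_k-m\circ r_k,r_k)$ parametrises $\Psi_m(\nu^k)$ since $t\mapsto m(t)$ is continuous) would close this gap and also render the Stone--Weierstrass detour unnecessary.

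Second, your claim that $|\cdot|$ preserves M1 convergence ``pointwise in the parametric representation'' is false in general: applying $|\cdot|$ to a parametric representation of $h$ does not give one for $|h|$ when $h$ crosses zero. For instance, if $h_k$ has two jumps $-1\to 0$ and $0\to 1$ at times $1/2\mp 1/k$ merging into a single jump $-1\to 1$, then $h_k\to h$ in M1 but $|h_k|$ dips to $0$ and does not M1-converge to $|h|\equiv 1$. The paper's Step~3 actually proves continuity of $\nu\mapsto\int{\bf q}_{u,m}^n(t,\cdot)\dd\nu(t)$ \emph{without} the absolute value, which is what is ultimately needed in Section~\ref{sec:proof of epsilon eq} since the modulus only enters through $h\mapsto\int_0^T|h(s)|^2\dd s$, and the latter is M1-continuous on bounded sets by dominated convergence at continuity points of the limit.
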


Let us now explain the proof of Theorem \ref{Theorem-epsilon::Nash-Cournot}.
We shall proceed by contradiction, assuming  that \eqref{epsilon-Nash-th2.13-Cournot} does not hold. Then there exists $\ee_0>0$, a sequence of integers $N_k$ such that $\lim_k N_k = +\infty$, and sequences $(i_k) \subset \{1,..., N_k\}$, $(q^{i_k}) \subset \mathbb A_c $, such that
\begin{equation}\label{contradiction-assumption-Th31}
 \J_c^{i_k,N_k}\left( q^{i_k} ; (\hat q^j)_{j\neq i_k} \right) > \ee_0 + \J_c^{i_k,N_k}\left(\hat q^1,...,\hat q^N \right), \ \ \forall k \geq 0.
\end{equation}
We derive a contradiction by estimating the difference between $\J_c^{i_k,N_k}$ and the mean field objective $\J_c$, which we recall from Section \ref{sec:main results}:
	\begin{equation} \label{eq:mean field objective}
	\J_c(\rho):=  \E\left\{\int_0^T e^{-r s} \left(1-\k \bar q_s-\rho_s  \right)\rho_s \1_{s<\tau^{\rho}}\dd s + e^{-rT}u_{T}\left(X_{T}^\rho \right)   \right\}, \ \bar q = \int_0^L q_{u,m} \dd m
	\end{equation}
	where
	$$
	\dd X_t^\rho=- \rho_t \1_{t<\tau^\rho} \dd t + \sigma \1_{t<\tau^\rho} \dd W_{t} - \dd \xi_t^{X^\rho}.
	$$
Using Lemma \ref{propagation-of-chaos-lemma}, we will show that this difference goes to zero as $N_k \to +\infty$.

Let us set for any $k\geq 0$,
\begin{equation*}
\left\{
\begin{aligned}
&\dd X_{t}^{i_k} :=  - q_t^{i_k} \dd t + \sigma  \dd W_{t}^{i_k}-\dd \xi_t^{X^{i_k}},\ \  X_{0}^{i_k} = V_{i_k},\\
&  \tau^{i_k}:=\inf\{ t\geq 0 \ : \ X_{t}^{i_k}\leq 0 \}\wedge T,
\end{aligned}
\right.
\end{equation*}
and define 
$$
\Z_{1,T}^k := \int_0^T  q_s^{i_k} \1_{s< \tau^{i_k}} \dd s, \quad \mbox{ and } \quad  \Z_{2,T}^k := \int_0^T  \left| q_s^{i_k} \right|^2 \1_{s< \tau^{i_k}} \dd s.
$$
Recall that all elements of $\mathbb A_c$ are non-negative, so that  $\Z_{1,T}^k \geq 0$ for any $k\geq 0$. We start by collecting estimates on $\left( \Z_{1,T}^{i_k}\right)_{k\geq 0}$ and $\left( \Z_{2,T}^{i_k}\right)_{k\geq 0}$.
Observe that for any $t\in [0,T]$,
$$
X_{t \wedge \tau^{i_k}}^{i_k} = V_k - \int_0^t  q_s^{i_k} \1_{s< \tau^{{i_k}}} \dd s  + \sigma W_{ t\wedge \tau^{i_k}}^{i_k} - \xi_t^{ X^{i_k}}, \quad  \forall k\geq0.
$$
Since the local time is nondecreasing, we infer that
$$
0 \leq \Z_{1,T}^k \leq V_{i_k} - X_{ \tau^{i_k}}^{i_k} + \sigma W_{ \tau^{i_k}}^{i_k}, \quad \forall k\geq 0
$$
holds almost surely. By means of the optional stopping theorem, we deduce that
\begin{equation}\label{th41-estim2}
\sup_{k\geq 0}\E\left[ \Z_{1,T}^{k}\right]  \leq L.
\end{equation}
Moreover, recall that 
$$
 \J_c^{i_k,N_k}\left( q^{i_k} ; (\hat q^j)_{j\neq i_k} \right) =\E\left\{ \int_{0}^{T} e^{-rs}  \left( 1-\k \overline{\hat q_s}^{i_k}-q_s^{i_k} \right) q_s^{i_k} \mathds{1}_{s < \tau^{i_k}} \dd s + e^{-rT}u_{T}( X_{\tau^{i_k}}^{i_k} )  \right\},
$$ 
where for any $k\geq 0$
$$
\overline{\hat q_s}^{i_k} = \frac{1}{N_{k}-1}\sum_{j \neq i_k} q_{u,m}(s, \hat X_{s}^{j} )\1_{s < \hat \tau^{j}}.
$$
Thus, for any $k\geq 0$
$$
e^{-r T}\E \left[ \Z_{2,T}^k \right] \leq  \| u_T \|_{\infty} + \E\left\{ \int_{0}^{T} e^{-rs}  \left| 1-\k \overline{\hat q_s}^{i_k} \right| q_s^{i_k} \mathds{1}_{s <\tau^{i_k}} \dd s  \right\} -  \J_c^{i_k,N_k}\left( q^{i_k}; (\hat q^j)_{j\neq i_k} \right).
$$
By virtue of \eqref{contradiction-assumption-Th31} and the uniform bound on $q_{u,m}$ that is given in \eqref{uniform-bound::G-lemma23}, we deduce that
$$
e^{-r T}\E \left[ \Z_{2,T}^k \right] \leq  2\| u_T \|_{\infty} + (\k+1)\sup_{k\geq 0}\E\left[ \Z_{1,T}^{k}\right] +  C(\k,T),
$$
so that
\begin{equation}\label{th41-estim2-bis}
\sup_{k\geq 0}\E\left[ \Z_{2,T}^{k}\right]  \leq C(T,\k, \| u_T \|_{\infty}, L).
\end{equation}

On the other hand, we have for any $k\geq 0$,
\begin{multline*}
 \J_c^{i_k,N_k}\left( q^{i_k} ; (\hat q^j)_{j\neq i_k} \right) \\
  \leq  \E\left\{ \int_{0}^{T} e^{-rs}  \left( 1-\k \int_0^L q_{u,m}(s,.) \dd \hat \nu_s^{N_k} -q_s^{i_k} \right) q_s^{i_k} \mathds{1}_{s <  \tau^{i_k}} \dd s + e^{-rT}u_{T}( X_{\tau^{i_k}}^{i_k} ) \right\} \\
  + \k \left(\frac{N_k}{N_k-1} -1 \right) + \frac{\k}{N_k} \sup_{k\geq 0}\E\left[ \Z_{1,T}^{i_k}\right].
\end{multline*}
Thus, for any $k\geq 0$
\begin{multline*}
 \J_c^{i_k,N_k}\left( q^{i_k} ; (\hat q^j)_{j\neq i_k} \right)- \J_c( q^{i_k})- CN_k^{-1}\\
\leq \k \E \left[ \int_0^T e^{-r s} q_s^{i_k} \mathds{1}_{s < \tau^{q^{i_k}}} \left| \int_\R  {\bf q}_{u,m}(s,.)  \dd \left(m(s)-\hat \nu_s^{N_k} \right) \right|  \dd s  \right]\\
\leq  \k \E \left[ \int_0^T e^{-r s} q_s^{i_k} \mathds{1}_{s < \tau^{q^{i_k}}}\left| \int_\R  {\bf q}_{u,m}^n (s,.)  \dd \left(m(s)-\hat \nu_s^{N_k} \right) \right|  \dd s  \right]\\
+ \k \E \left[ \int_0^T e^{-r s} q_s^{i_k} \mathds{1}_{s < \tau^{q^{i_k}}} \dd s \right] \left\|  {\bf q}_{u,m}^n  - {\bf q}_{u,m} \right\|_{\infty},
\end{multline*}
where $\J_c$ is given by \eqref{eq:mean field objective} and ${\bf q}_{u,m}, {\bf q}_{u,m}^n $ are given by \eqref{extension-definition}-\eqref{extension-regularization}.

Let us fix $\ee >0$.
Since $\left({\bf q}_{u,m}^n \right)_{n\geq 1}$ converges uniformly toward ${\bf q}_{u,m}$ on $[0,T]\times \R$, we can choose $n$ large enough and independently of $k\geq 0$ so that
\begin{multline}\label{th41-estim3}
\J_c^{i_k,N_k}\left( q^{i_k} ; (\hat q^j)_{j\neq i_k} \right)- \J_c( q^{i_k}) \\
\leq \k \E \left[ \Z_{2,T}^k \right]^{1/2}  \E\left[  \int_0^T  \left| \int_\R  {\bf q}_{u,m}^n(s,.)  \dd \left(\hat \nu_s^{N_k}- m(s) \right)  \right|^2 \dd s  \right]^{1/2}
+ \k  \ee \E \left[\Z_{1,T}^k\right] + C N_k^{-1}.
\end{multline}
Appealing to Lemma \ref{propagation-of-chaos-lemma}, Lemma \ref{continuity:M1} and the continuous mapping theorem we have
\begin{equation}\label{requirement-convergence-THm}
\lim_N \E\left[  \int_0^T  \left| \int_\R  {\bf q}_{u,m}^n(s,.)  \dd \left(\hat \nu_s^{N_k}- m(s) \right)  \right|^2 \dd s  \right]=0.
\end{equation}
Thus, by combining \eqref{th41-estim2}, \eqref{th41-estim2-bis}, and \eqref{th41-estim3}:
$$
\J_c^{i_k,N_k}\left( q^{i_k} ; (\hat q^j)_{j\neq i_k} \right)- \J_c( q^{i_k}) \leq  C(T,\k, \| u_T \|_{\infty}, L) \ee
$$
for big enough $k\geq 0$. Whence, by means of Lemma \ref{Lemma-optimal:control-Cournot}:
\begin{equation*}
\J_c^{i_k,N_k}\left( q^{i_k} ; (\hat q^j)_{j\neq i_k} \right) \leq C(T,\k, \| u_T \|_{\infty}, L)\ee + \J_c(\rho^\ast)
\end{equation*}
for big enough $k\geq 0$. 
In the same manner, one can show that
$$
\J_c(\rho^{\ast}) \leq C \ee + \J_c^{i_k,N_k}\left(\hat q^1,...,\hat q^N \right)
$$
holds for big enough $k\geq 0$. Hence, going back to  \eqref{contradiction-assumption-Th31} and using the above estimates, we obtain
$$
\ee_0  < C(T,\k, \| u_T \|_{\infty}, L)\ee.
$$
We deduce the desired contradiction by choosing $\ee$ suitably small.

\appendix

\section{Proofs of some elementary or technical results} \label{App1}

We start by giving a proof to Lemma \ref{Lemma-optimal:control-Cournot}.

\begin{proof}[Proof of Lemma \ref{Lemma-optimal:control-Cournot}]
This kind of verification results is standard: one checks that the candidate optimal control is indeed the maximum using the equation satisfied by $u$; which is the value function. Let $\rho$ be an admissible control ($\mathbb F$-adapted and satisfying the constraints). 
Since the local time is monotone, then $X^\rho$ is a semimartingale and with the use of It\^o's rule we obtain
\begin{multline*}
\E\left[e^{-rT}u_T\left(X_{\tau^{\rho}}^\rho \right) \right] = \\
\E\left[ u(0,X_0^\rho)+\int_0^{\tau^{\rho}} e^{-r s} \left\{ u_t(s,X_s^\rho)-ru(s,X_s^\rho)-\rho_s u_x(s,X_s^\rho)+\frac{\sigma^2}{2} u_{xx}(s,X_s^\rho)   \right\} \dd s   \right]\\
= \E\left[ u(0,X_0^\rho)-\int_0^{\tau^{\rho}} e^{-r s} \left\{q_{u,m}^{2}(s,X_s^\rho) +\rho_s u_x(s,X_s^\rho)  \right\} \dd s   \right],
\end{multline*}
where we have used the boundary value problem satisfied by $u$ and the fact that $u_t,u_x,u_{xx}$ are continuous on $(0,T)\times (0,L]$ (cf. \eqref{u-regularity}).

By using definition \eqref{MFG-quantity-def-initial}, note that 
\begin{equation*}
q_{u,m}^{2}  = \frac{1}{4}\left| (1-\k \bar q -u_x) \vee 0 \right|^{2}  = \sup_{\rho \geq  0} \rho(1-\k \bar q- \rho-u_x)
= q_{u,m}(1-\k \bar q-q_{u,m}-u_x).
\end{equation*}
Therefore
$$
\E\left[e^{-rT}u_T\left(X_{\tau^{\rho}}^\rho \right) \right] \leq  \E\left[ u(0,X_0^\rho)-\int_0^{\tau^{\rho}} e^{-r s} \rho_s (1-\k \bar q - \rho_s)  \dd s   \right],
$$
so that
$$
\int_0^L u(0,.) \dd m_0=  \E\left[ u(0,X_0^\rho)\right] \geq 
\E\left[ \int_0^{\tau^{\rho}} e^{-r s} (1-\k \bar q-\rho_s) \rho_s   \dd s + e^{-rT}u_T\left(X_{ \tau^{\rho}}^\rho \right) \right].
$$
By virtue of Lemma \ref{holder-estimates}, we know that the process $( X_t^{\rho^\ast} )_{t\in[0,T]}$ exists in the strong sense. Replacing $\rho$ by $\rho^\ast$ in the above computations, inequalities become equalities and we easily infer that
$$
\J_c(\rho^\ast)= \int_0^L u(0,.) \dd m_0.
$$
Thus \eqref{equation-1-lemma211-cournot} is proved. 
\end{proof}

Next, we give a proof to Lemma \ref{continuity:M1}.
\begin{proof}[Proof of Lemma \ref{continuity:M1}]
Throughout the proof, we shall use notations of \cite{ledger2016-2, Whitt2002}.

\emph{Step 1 (continuity in $\S'_\R$):} By virtue of Theorem \ref{theorem:wellposedness}, we know that $t \to m(t)$ is continuous on $[0,T]$ with respect to the strong topology of $\S'_\R$. Let $\phi \in \S'_\R$, we aim to compute the modulus of continuity of $t \to \int_\R \phi \dd m(t)$.
For this, we shall appeal to the probabilistic characterization \eqref{probabilistic-charact-strong}, thanks to Remark \ref{probabilistic-measures-every time}. 
We have for any $h>0$
\begin{multline}\label{lemmaA1-estim1}
\left| \int_\R \phi \dd(m(t+h)-m(t))  \right| \leq  \E\left| \phi(X_{t+h})\1_{t+h<\tau} - \phi(X_t) \1_{t<\tau}  \right| \\
\leq C\| \phi \|_{\C^1} \left(  \Pp(t<\tau) - \Pp(t+h<\tau) + \E\left| X_{t+h} -X_t  \right|   \right).
\end{multline}
Following the same steps as for \eqref{estimate::choice:of:ee-bis}-\eqref{estimate::choice:of:ee}, and using Burkholder-Davis-Gundy inequality, we obtain for small enough $h>0$
$$
\left| \int_\R \phi \dd(m(t+h)-m(t))  \right| 
\leq C\| \phi \|_{\C^1}\omega_m(h),
$$
where 
$$
\omega_m (h) := h^{1/2} +\left(\log(1/h)-h^{1/2} \right)^{-1}+ \sup_{s\in[0,T]}\int_0^{L}(1-\phi_{h^{1/2}\log(1/h)}(x))  m(s,x)  \dd x, 
$$
and $\phi_\e$ is the cut-off function defined in \eqref{cutt-off-function:specifications}. 
In order to get $\lim_{h\to 0^+} \omega_m (h)=0$, we need to prove that
\begin{equation*}
\lim_{h\to 0^+}\sup_{s\in[0,T]}\int_0^{L}(1-\phi_{h^{1/2}\log(1/h)}(x))  m(s,x)  \dd x=0.
\end{equation*}
This ensues easily from Dini's Lemma, by choosing  the sequence
$(\phi_\e)_{\e>0}$ to be monotonically increasing.

\emph{Step 2 (continuity of $\Psi_m$):} Let $\e>0$, $x,y \in D_{\S'_\R}$, $B$ be any bounded subset of $\S_\R$, and $\la_{x}:=(z_x,t_x), \la_y:=(z_y,t_y)$ be a  parametric representations of the graphs of $x$ and $y$ respectively, such that
$$
g_B(\la_x,\la_y):= \sup_{s\in[0,1]} p_B(z_x(s)-z_y(s))\vee \left| t_x(s)-t_y(s)\right| \leq \e,
$$
where $p_B(\nu):=\sup_{x\in B}|\nu(x)|$.
Note that $\la_x, \la_y$ depend on $\e$, but we do not use the subscript $\e$ in order to simplify the notation. We have
\begin{multline*}
g_B(\la_x,\la_y) 
\geq   \sup_{s\in[0,1]} p_B\left(z_x(s)-m(t_x(s))-z_y(s)+m(t_y(s)) \right) \vee \left| t_x(s)-t_y(s)\right| \\
-  \sup_{s\in[0,1]} \max  p_B(m(t_x(s))-m(t_y(s)))\vee \left| t_x(s)-t_y(s)\right|.
\end{multline*}
Since the map $t\to m(t) \in \S'_\R$ is continuous, observe that 
$$
\la'_v : \ s \to \left(z_v(s)-m(t_v(s)), t_v(s) \right), \quad v\equiv x,y
$$
is a parametric representation of the graph
$$
\g'_v := \left\{ (w,t) \in \S'_\R\times [0,T] : w\in\left[ v(t^-)-m(t) , v(t)-m(t)   \right]   \right\}, \quad v\equiv x,y.
$$
Consequently
\begin{multline}\label{lemmaA1-estim2}
\ddd_{{B},{\rm M1}}\left(\Psi_m(x), \Psi_m(y) \right) \leq g_B(\la_x,\la_y) + \sup_{s\in[0,1]} p_B(m(t_x(s))-m(t_y(s))) \vee \left| t_x(s)-t_y(s)\right| \\
\leq 2 \e +  \sup_{s\in[0,1]} p_B(m(t_x(s))-m(t_y(s))).
\end{multline}
Hence, by using the estimation of Step 1, we infer that:
\begin{equation}\label{lemmaA1-estim3}
\ddd_{{B},{\rm M1}}\left(\Psi_m(x), \Psi_m(y) \right) \leq C(B) \omega_m(\e),
\end{equation}
which in turn implies that $\Psi_m$ is continuous.

\emph{Step 3 (continuity of $\Psi_{\bf q}^n$):} 
Let us fix $n\geq 1$. Note that ${\bf q}_{u,m}^n$ maps $[0,T]$ into $\S_\R$, and the following holds:
\begin{equation}\label{th41-estim0}
\sup_{t\in[0,T]}\sup_{x \in \R}  \left| x^\a \p_x^\b {\bf q}_{u,m}^n(t,x)  \right| \leq C(L,\a) n^\b  \int_\R \left| \p_x^\b \xi(y) \right| \dd y, \quad \forall \a,\b \in \N.
\end{equation}
Owing to \eqref{th41-estim0}, we have ${\bf q}_{u,m}^n([0,T]) \subset B_n$, where $B_n$ is a bounded subset of $\S_\R$.  Let $\e>0$, $x,y \in \A$, and $\la_{x}:=(z_x,t_x), \la_y:=(z_y,t_y)$ be a  parametric representations of the graphs of $x$ and $y$ respectively such that
$$
g_{B_n}(\la_x,\la_y) \leq \e.
$$
We have 
\begin{multline*}
g_{B_n}(\la_x,\la_y) \geq  \sup_{s\in[0,1]}  \left| \int_0^L {\bf q}_{u,m}^n(t_x(s),.) \dd(z_x(s)-z_y(s)) \right|   \vee \left| t_x(s)-t_y(s)\right|  \\
\geq  \sup_{s\in[0,1]}  \left| \int_0^L {\bf q}_{u,m}^n(t_x(s),.) \dd z_x(s)-   \int_0^L {\bf q}_{u,m}^n(t_y(s),.) \dd z_y(s) \right|   \vee \left| t_x(s)-t_y(s)\right|  \\
-  \sup_{s\in[0,1]} \left|\int_0^L \left( {\bf q}_{u,m}^n(t_x(s),.)- {\bf q}_{u,m}^n(t_y(s),.) \right) \dd z_y(s)  \right|   \vee \left| t_x(s)-t_y(s)\right|.
\end{multline*}
Thus, it holds that
\begin{multline*}
\sup_{s\in[0,1]}  \left| \int_0^L {\bf q}_{u,m}^n(t_x(s),.) \dd z_x(s)-   \int_0^L {\bf q}_{u,m}^n(t_y(s),.) \dd z_y(s) \right|   \vee \left| t_x(s)-t_y(s)\right|  \\
\leq 2\e + \sup_{s\in[0,1]}\left| \int_0^L \left( {\bf q}_{u,m}^n(t_x(s),.)- {\bf q}_{u,m}^n(t_y(s),.) \right) \dd z_y(s)   \right|
\leq 2\e + \omega_2^n(\e).
\end{multline*}
where $\omega_2^n$ is the continuity modulus of ${\bf q}_{u,m}^n$. By noting that 
$$
\la''_v : \ s \to \left(\int_0^L {\bf q}_{u,m}^n(t_v(s),.) \dd z_v(s), t_v(s) \right), \quad v\equiv x,y
$$
is a parametric representation of the graph
$$
\g''_v := \left\{ (w,t) \in \S'_\R\times [0,T] : w\in\left[ \int_0^L {\bf q}_{u,m}^n(t,.) \dd v(t^-) , \int_0^L {\bf q}_{u,m}^n(t,.) \dd v(t)   \right]   \right\}, \quad v\equiv x,y,
$$
we deduce that
$$
\ddd_{{\rm M1}}\left(\Psi_{\bf q}^n(x), \Psi_{\bf q}^n(y) \right) \leq 2\e + \omega_2^n(\e).
$$
The proof is complete.
\end{proof}

\section{On uniqueness for solutions of Fokker-Planck equations} \label{App2}

In this part, we show that problem \eqref{eq:fp}-\eqref{eq:fp:bc} admits at most one weak solution in a wide class of positive Radon measures. 
We believe that this result is well-known, and we explain the proof for lack of precise reference.

Let us start by generalizing the notion of weak solution that is given in \eqref{eq:weak fp}. For any $m_0 \in  \P(\bar Q)$, we define a \emph{measure-valued weak solution} to \eqref{eq:fp}-\eqref{eq:fp:bc} to be a measure ${\bf m}$ on $\overline{Q}_T$ of the type  
$$
\dd {\bf m} = \dd m(t) \dd t,
$$ 
with $m(t) \in \tilde \P(\bar Q)$ for all $t\in [0,T]$, and $t \to m(t,A)$ measurable on $[0,T]$ for any Borel set $A \subset \bar Q$;
such that 
$$
\| b \|_{L_{\bf m}^2}^2:= \int_0^T \int_0^L |b|^2 \dd {\bf m} < \infty
$$ 
and
\begin{equation}
\label{eq:weak fp:measure}
\int_0^T \int_0^L (-\phi_t - \frac{\sigma^2}{2} \phi_{xx} + b \phi_x) \dd {\bf m} = \int_0^L \phi(0,.)\dd m_0
\end{equation}
for every $\phi \in \C^{test}$. We claim that such a solution is unique:

\begin{proposition}\label{uniqueness:measure-valued}
There is at most one measure-valued weak solution to \eqref{eq:fp}-\eqref{eq:fp:bc}.
\end{proposition}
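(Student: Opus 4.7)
\emph{Proof sketch.} Let ${\bf m}_1, {\bf m}_2$ be two measure-valued weak solutions sharing the same initial data $m_0$, and set ${\bf m} := {\bf m}_1 - {\bf m}_2$ and $\tilde{\bf m} := {\bf m}_1 + {\bf m}_2$. Subtracting the two weak formulations \eqref{eq:weak fp:measure} yields
$$
\int_0^T \int_0^L \left(-\phi_t - \frac{\sigma^2}{2}\phi_{xx} + b\phi_x\right) d{\bf m} = 0, \qquad \forall \phi \in \C^{test}.
$$
It suffices to prove that $\int_0^T\int_0^L \psi\, d{\bf m} = 0$ for every $\psi \in \C_c^\infty((0,T)\times(0,L))$; a routine density argument then gives $m_1(t) = m_2(t)$ for a.e. $t \in (0,T)$. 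My plan is to implement the classical Holmgren-type duality argument: given $\psi$, construct $\phi$ solving the backward dual problem
$$
-\phi_t - \frac{\sigma^2}{2}\phi_{xx} + b\phi_x = \psi, \qquad \phi(T,\cdot) = 0, \quad \phi(t,0) = 0, \quad \phi_x(t,L) = 0,
$$
and insert $\phi$ in the identity above.

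Since $b$ is only square-integrable against $\tilde{\bf m}$, I would regularize: choose smooth bounded drifts $(b_n)$ with $b_n \to b$ in $L^2_{\tilde{\bf m}}$ (by truncation and mollification). Classical parabolic theory \cite[Ch.~IV]{Parabolic67} yields a classical solution $\phi_n \in \C^{1,2}(\overline{Q_T})$ of the regularized backward problem; the compatibility conditions hold because $\psi$ is compactly supported away from the parabolic boundary. Multiplying $\phi_n$ by a smooth temporal cut-off $\chi_\ee$ equal to $1$ on $[0, T-\ee]$ and vanishing at $T$, and smoothing slightly in $x$, produces a valid test function in $\C^{test}$; plugging it into the weak formulation for ${\bf m}$, using the dual equation, and letting $\ee \to 0$ (the extra term coming from $\chi_\ee'$ is controlled by $\|\phi_n\|_\infty (T-t)/\ee = O(\ee)$ since $\phi_n$ vanishes at $T$) gives
$$
\int_0^T\int_0^L \psi\, d{\bf m} = \int_0^T\int_0^L (b - b_n)\phi_{n,x}\, d{\bf m}.
$$
The maximum principle yields $\|\phi_n\|_\infty \leq T\|\psi\|_\infty$ independently of $n$.

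The main obstacle, and the only nontrivial step, is a uniform bound on $\phi_{n,x}$ in $L^2_{\tilde{\bf m}}$; together with $\|b-b_n\|_{L^2_{\tilde{\bf m}}} \to 0$ and Cauchy--Schwarz, this forces the right-hand side to vanish as $n \to \infty$. In the spirit of \cite[Corollary 3.5]{porretta2015weak}, I would use $\phi_n^2/2$ as a test function against $\tilde{\bf m}$ itself: the conditions $\phi_n(t,0) = 0$ and $\phi_{n,x}(t,L) = 0$ ensure that $\phi_n^2/2$ meets the boundary requirements of $\C^{test}$ (after the same mollification), and a direct computation using the equation for $\phi_n$ produces
$$
\frac{\sigma^2}{2}\int_0^T\int_0^L |\phi_{n,x}|^2\, d\tilde{\bf m} + \int_0^L \phi_n(0,\cdot)^2\, dm_0 = \int_0^T\int_0^L \phi_n \psi\, d\tilde{\bf m} + \int_0^T\int_0^L (b - b_n)\phi_n \phi_{n,x}\, d\tilde{\bf m}.
$$
Young's inequality applied to the last term absorbs it into the left-hand side, yielding $\|\phi_{n,x}\|_{L^2_{\tilde{\bf m}}} \leq C$ uniformly in $n$, which closes the argument. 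The only new ingredient relative to \cite{porretta2015weak} is the mixed boundary setting, but the Dirichlet condition on $\phi_n$ at $x=0$ and the no-flux condition on ${\bf m}_i$ at $x=L$ together annihilate every boundary term arising in the integrations by parts, so the Porretta argument carries over with no substantive change.
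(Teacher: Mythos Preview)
Your proposal is correct and follows essentially the same route as the paper: Holmgren-type duality via the backward problem with a smoothed drift $b_n$, the crucial uniform $L^2_{\tilde{\bf m}}$-bound on $\phi_{n,x}$ obtained by plugging $\phi_n^2$ as a test function into the weak formulation of a measure-valued solution, and Cauchy--Schwarz to send $n\to\infty$. The paper derives the energy estimate for a single solution ${\bf m}$ and then applies it to ${\bf m}_1,{\bf m}_2$ separately rather than to $\tilde{\bf m}$, and it glosses over the temporal cutoff near $t=T$ that you spell out, but these are cosmetic differences.
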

\begin{proof}
Our approach is similar to \cite[Section 3.1]{porretta2015weak}. Let ${\bf m}$ be a measure-valued weak solution to \eqref{eq:fp}-\eqref{eq:fp:bc},
and consider the following dual problem:
\begin{equation}
\label{dual-problem:uniqueness} 
\left\{
\begin{aligned}
&-w_{t}-\frac{\sigma^{2}}{2} w_{xx} + {\bf b} w_x=\psi
 \quad \mbox{ in }Q_T,\\
& w(t,0) =w_{x}(t,L)=0\quad \mbox{ in } (0,T), \\
&w(T,x)=0 \quad \mbox{ in } Q,
\end{aligned}
\right.
\end{equation} 
where $\psi, {\bf b} \in \C^{\infty}(\overline{Q}_T)$. Let $w$ be a smooth solution to \eqref{dual-problem:uniqueness}. Since $w^2$ is smooth, we have:
$$
\int_0^T \int_0^L \left\{ -(w^2)_t - \frac{\sigma^2}{2} (w^2)_{xx} + b (w^2)_x \right\} \dd {\bf m} = \int_0^L w^2(0,.)\dd m_0.
$$
By \eqref{dual-problem:uniqueness} we thus have
$$
\int_0^T \int_0^L w(\psi-{\bf b} w_x)    \dd {\bf m} - \frac{\sigma^2}{2}\int_0^T \int_0^L |w_x|^2    \dd {\bf m} 
+  \frac{\sigma^2}{2}\int_0^T \int_0^L b w w_x    \dd {\bf m} = \int_0^L w^2(0,.)\dd m_0,
$$
so that 
$$
\frac{\sigma^2}{4}\int_0^T \int_0^L |w_x|^2    \dd {\bf m} \leq C \left( \| w \|_{\infty}^2  \int_0^T \int_0^L |b-{\bf b}|^2    \dd {\bf m}  + \| \psi\|_{\infty}\| w \|_{\infty} \right).
$$
Hence, from the maximum principle:
\begin{equation}\label{energy:estimate-uniqueness}
\int_0^T \int_0^L |w_x|^2    \dd {\bf m} \leq C \| \psi \|_{\infty}^2 \left(1+\| b-{\bf b} \|_{L_{\bf m}^2}^2 \right).
\end{equation}

Now, let ${\bf m}_1, {\bf m}_2$ be two measure-valued weak solutions to \eqref{eq:fp}-\eqref{eq:fp:bc}. We know that 
$$
b \in L_{{\bf m}_1}^2(Q_T) \cap L_{{\bf m}_2}^2(Q_T).
$$
Thus, $b \in L_{{\bf m}}^2(Q_T)$, where ${\bf m}= {\bf m}_1+{\bf m}_2$. Let $b^\e$ be a sequence of smooth functions converging to $b$ in $L_{{\bf m}}^2(Q_T)$. Since ${\bf m}$ is regular, note that such a sequence exists by density of smooth functions in $L_{{\bf m}}^2(Q_T)$. The measures ${\bf m}_1, {\bf m}_2$ being positive, $b^\e$ converges toward $b$ in $L_{{\bf m}_1}^2(Q_T) \cap L_{{\bf m}_2}^2(Q_T)$ as well. Now, let us consider $w^\e$ to be a solution to the dual problem that is obtained by replacing ${\bf b}$ by $b^\e$ in \eqref{dual-problem:uniqueness}.
By using $w^\e$ as a test function, we obtain
\begin{equation}
\label{eq:weak fp:measure}
\int_0^T \int_0^L \psi \dd( {\bf m}_1 - {\bf m}_2) =  \int_0^T \int_0^L (b-b^\e) w_x^\e \dd {\bf m}_2  -\int_0^T \int_0^L (b-b^\e) w_x^\e \dd {\bf m}_1  =: I_2^\e - I_1^\e.
\end{equation}
By virtue of \eqref{energy:estimate-uniqueness}, we have for $j=1,2$:
$$
\| w_x^\e \|_{L_{{\bf m}_j}^2} \leq C\| \psi \|_\infty \left(1+\| b-b^\e \|_{L_{{\bf m}_j}^2} \right) \leq C,
$$
so that
$$
\left| I_j^\e \right| \leq \| w_x^\e \|_{L_{{\bf m}_j}^2} \| b-b^\e \|_{L_{{\bf m}_j}^2} \leq C \| b-b^\e \|_{L_{{\bf m}_j}^2} \to 0, \quad \mbox{ as } \e \to 0.
$$
Consequently, for any smooth function $\psi$
$$
\int_0^T \int_0^L \psi \dd( {\bf m}_1 - {\bf m}_2) =0,
$$
which entails ${\bf m}_1 \equiv {\bf m}_2$ and concludes the proof.
\end{proof}


\begin{biblist}
	

	\bib{anderson1976}{article}{
		author={Anderson, R. F.},
		author={Orey, S.},
		title={Small random perturbation of dynamical systems with reflecting boundary},
		date={1976},
		journal={Nagoya Mathematical Journal},
		volume={60},
		pages={189--216}
	}

	\bib{Avram1989}{article}{
		author={Avram, F.},
		author={Taqqu, M. S.},
		title={Probability bounds for M-Skorohod oscillations},
		date={1989},
		journal={Stochastic Processes and their Applications},
		volume={33},
		number={1},
	}
	
	
	

	\bib{bensoussan2013}{book}{
		author={Bensoussan, A.},
		author={Frehse, J.},
		author={Yam, P.},
		title={Mean field games and mean field type control theory},
		volume={ 101},
		publisher={Springer},
		date={2013},
	}



	\bib{Malhame}{article}{
		author={Caines, P.-E.},
		author={Huang, M.},  
		author={Malham{\'e}, P.},
		title={Large-population cost-coupled LQG problems with nonuniform agents: individual-mass behavior and decentralized $\e$-Nash equilibria.},
		journal={IEEE Trans. Automat. Control},
		volume={52(9)},
		pages={1560--1571},
		year={2007},
	}
	
	\bib{Malhame2}{article}{
		author={Caines, P.-E.},
		author={Huang, M.},  
		author={Malham{\'e}, P.},
		title={Large population stochastic dynamic games: closed-loop McKean-Vlasov systems and the Nash certainty equivalence principle.},
		journal={Commun. Inf. Syst.},
		volume={6(3)},
		pages={221--251},
		year={2006},
	}

	\bib{cannarsa2018existence}{incollection}{
		author={Cannarsa, P.},
		author={Capuani, R.},
		title={Existence and uniqueness for Mean Field Games with state constraints},
		booktitle={PDE Models for Multi-Agent Phenomena},
		pages={49--71},
		year={2018},
		publisher={Springer}
	}

	\bib{cannarsa2018constrained}{article}{
		author={Cannarsa, P.},
		author={Capuani, R.},
		author={Cardaliaguet, P.},
		title={$C^{1,1}$-smoothness of constrained solutions in the calculus of variations with application to mean field games},
		journal={arXiv preprint arXiv:1806.08966},
		year={2018},
	}

	\bib{cannarsa2018mean}{article}{
		author={Cannarsa, P.},
		author={Capuani, R.},
		author={Cardaliaguet, P.},
		title={Mean Field Games with state constraints: from mild to pointwise solutions of the PDE system},
		author={Cannarsa, Piermarco and Capuani, Rossana and Cardaliaguet, Pierre},
		journal={arXiv preprint arXiv:1812.11374},
		year={2018},
	}

	
	\bib{master-equation}{article}{
	      author={Cardaliaguet, P.},
	      author={Delarue, F.},
	      author={Lasry, J.-M.},
	      author={Lions, P.-L.},
	       title={The master equation and the convergence problem in mean field games},
	        date={2015},
	     journal={arXiv:1509.02505v1},
	}

	\bib{cardaliaguet2016mfgcontrols}{article}{
		author={Cardaliaguet, P.},
		author={Lehalle, C.-A.},
		title={Mean field game of controls and an application to trade
			crowding},
		journal={Mathematics and Financial Economics},
		pages={335--363},
		volume={12},
		number={3},
		year={2018},
		publisher={Springer}
	}


	\bib{carmona2013}{article}{
		author={Carmona, R.},
		author={Delarue, F. },  
		title={Probabilistic analysis of mean-field games},
		journal={SIAM Journal on Control and Optimization},
		volume={51},
		number={4}
		pages={2705--2734},
		year={2013},
	}
	
	\bib{carmona2018}{book}{
		author={Carmona, R.},
		author={Delarue, F. },  
		title={Probabilistic theory of mean-field games with applications I-II},
		publisher={Springer},
		volume={83--84},
		year={2018},
	}
	
	\bib{chan2015bertrand}{article}{
		author={Chan, P.},
		author={Sircar, R.},
		title={Bertrand and {C}ournot mean field games},
		date={2015},
		journal={Applied Mathematics \& Optimization},
		volume={71},
		number={3},
		pages={533\ndash 569},
	}
	
	\bib{chan2015fracking}{article}{
		author={Chan, P.},
		author={Sircar, R.},
		title={Fracking, renewables \& mean field games},
		journal={SIAM Review},
		volume={59},
		number={3},
		pages={588--615},
		year={2017},
		publisher={SIAM}
	}
	
	
	\bib{Reflexion}{book}{
		author={Freidlin, M.},
		title={Functional integration and partial differential equations},
		series={Annals of Mathematics Studies},
		publisher={Princeton University Press},
		date={1985},
	}

	\bib{Tirole1991}{book}{
		author={Fudenberg, D. },
		author={Tirole, J. },
		title={Game Theory},
		publisher={MIT Press},
		date={1991},
	}
	
	\bib{Gilbarg}{book}{
		author={Gilbarg, D.},
		author={Trudinger, N. S.}, 
		title={Elliptic partial differential equations of second order},
		series={Classics in Mathematics},
		publisher={Springer-Verlag, Berlin},
		date={2001},
	}

	
	\bib{gomes2014extended}{article}{
		author={Gomes, D.},
		author={Patrizi, S.},
		author={Voskanyan, V.},
		title={On the existence of classical solutions for stationary extended
			mean field games},
		date={2014},
		journal={Nonlinear Analysis: Theory, Methods \& Applications},
		volume={99},
		pages={49\ndash 79},
	}
	
	
	\bib{gomes2016extended}{article}{
		author={Gomes, D.},
		author={Voskanyan, V.},
		title={Extended deterministic mean-field games},
		date={2016},
		journal={SIAM Journal on Control and Optimization},
		volume={54},
		number={2},
		pages={1030\ndash 1055},
	}
	

	\bib{graber2017variational}{article}{
		author={Graber, P. J.},
		author={Mouzouni, C.},
		title={Variational mean field games for market competition},
		date={2017},
		journal={arXiv preprint arXiv:1707.07853}
	}

	\bib{graber2015existence}{article}{
		author={Graber, P. J.},
		author={Bensoussan, A.},
		title={Existence and uniqueness of solutions for {Bertrand} and
			{Cournot} mean field games},
		journal={Applied Mathematics {\&} Optimization},
		volume={77},
		number={1},
		pages={47--71},
		year={2018},
		publisher={Springer}
	}
	
	\bib{gueant2011mean}{incollection}{
		author={Gu{\'e}ant, O.},
		author={Lasry, J.-M.},
		author={Lions, P.-L.},
		title={Mean field games and applications},
		date={2011},
		booktitle={Paris-princeton lectures on mathematical finance 2010},
		publisher={Springer},
		pages={205\ndash 266},
	}
	
	\bib{ledger2016-1}{article}{
		author={Hambly, B.},
		author={Ledger, S.},
		title={A stochastic McKean-Vlasov equation for absorbing diffusions on the half-line},
		journal={The Annals of Applied Probability},
		volume={27},
		number={5},
		pages={2698--2752},
		year={2017},
		publisher={Institute of Mathematical Statistics}
	}
	
	\bib{harris2010}{article}{
		author={Harris, C.},
		author={Howison, S.},
		author={Sircar, R.},
		title={Games with exhaustible resources},
		date={2010},
		journal={SIAM Journal on Applied Mathematics},
		volume={70},
		number={7},
		pages={2556\ndash 2581},
	}


	\bib{kolokoltsov2011}{article}{
		author={Kolokoltsov, V. N.},
		author={Li, J.},
		author={Yang, W.},
		title={Mean field games and nonlinear Markov processes},
		date={2011},
		journal={arXiv preprint arXiv:1112.3744},
	}
	
	\bib{Parabolic67}{book}{
		author={Ladyzenskaja, O.A.},
		author={ Solonnikov, V.A.},
		author= {Ural'ceva, N.N.},
		title={Linear and quasilinear equations of parabolic type},
		series={Translations of Mathematical Monographs},
		volume={ 23},
		publisher={American Mathematical Society, Providence, R.I.},
		date={1967},
	}
	
	\bib{lasry06}{article}{
		author={Lasry, J.-M.},
		author={Lions, P.-L.},
		title={{Jeux {\`a} champ moyen. I--Le cas stationnaire}},
		date={2006},
		journal={Comptes Rendus Math{\'e}matique},
		volume={343},
		number={9},
		pages={619\ndash 625},
	}
	
	\bib{lasry06a}{article}{
		author={Lasry, J.-M.},
		author={Lions, P.-L.},
		title={{Jeux {\`a} champ moyen. II--Horizon fini et contr{\^o}le
				optimal}},
		date={2006},
		journal={Comptes Rendus Math{\'e}matique},
		volume={343},
		number={10},
		pages={679\ndash 684},
	}
	
	\bib{lasry07}{article}{
		author={Lasry, J.-M.},
		author={Lions, P.-L.},
		title={Mean field games},
		date={2007},
		journal={Japanese Journal of Mathematics},
		volume={2},
		number={1},
		pages={229\ndash 260},
	}

	\bib{ledger2016-2}{article}{
		author={Ledger, S.},
		title={Skorokhod's M1 topology for distribution-valued processes},
		journal={Electronic Communications in Probability},
		volume={21},
		year={2016},
		publisher={The Institute of Mathematical Statistics and the Bernoulli Society}
	}

	\bib{Sircar:oligopoly}{article}{
		author={Ledvina, A.}
		author={Sircar, R.},
		title={Dynamic Bertrand Oligopoly},
		journal={Applied Mathematics {\&} Optimization},
		year={2011},
		month={Feb},
		day={01},
		volume={63},
		number={1},
		pages={11--44},
	}
	
	\bib{Lions-college-de-France}{article}{
		author={Lions, P.-L.},
		title={Cours au Coll\`ege de France},
		journal={www.college-de-france.fr}
	}
	
	\bib{ludkovski2017mean}{article}{
		author={Ludkovski, M.}  
		author={Yang, X.},
		title={Mean Field Game Approach to Production and Exploration of Exhaustible Commodities},
		journal={arXiv preprint arXiv:1710.05131},
		year={2017}
	}
	
	\bib{mitoma1983}{article}{
		author={Mitoma, I.},
		title={On the sample continuity of $\S'$-processes},
		journal={Journal of Mathematical Society of Japan},
		volume={35},
		numner={4},
		year={1983},
		pages={629--636}
	}
	

	\bib{porretta2015weak}{article}{
		author={Porretta, A.},
		title={Weak solutions to {Fokker--Planck} equations and mean field
			games},
		date={2015},
		number={1},
		volume={216},
		journal={Archive for Rational Mechanics and Analysis},
		pages={1\ndash 62},
		publisher={Springer Berlin Heidelberg}
	}
	
	\bib{product:measures}{book}{
		author={Rachev, T.},
		author={R\"uschendorf, L.},
		title={Mass Transportation Problems. Vol I: Theory; Vol II: Applications},
		series={Probability and Its Applications},
		publisher={Springer-Verlag New York},
		date={1998},
	}

	\bib{Rogers2000martingales}{book}{
		author={Rogers, L. C. G.},
		author={Williams, D.},
		title={Diffusions, Markov processes, and martingales. Vol 2},
		publisher={Cambridge Mathematical Library. Cambridge University Press},
		date={2000},
	}

	\bib{Whitt2002}{book}{
		author={Whitt, W.},
		title={Stochastic-process limits: an introduction to stochastic-process limits and their application to queues},
		publisher={Springer Science \& Business Media},
		year={2002},
	}
	
\end{biblist}
\end{document}